\newcommand{\rrvert}{\vert}
\newcommand{\llvert}{\vert}
\renewcommand{\mid}{|}
\newcommand{\hatk}{\hat{k}}
\newcommand{\barxi}{\bar{\xi}}
\newcommand{\mathds}{\mathbb}
\newcommand{\var}{\operatorname{Var}}
\newcommand{\La}{\Lambda}
\newcommand{\De}{\Delta}
\newcommand{\e}{\mathbb{E}}
\renewcommand{\P}{\mathbb{P}}
\newcommand{\mc}{\mathcal{C}}
\newtheorem{lemma}{Lemma}[section]
\newtheorem{proposition}{Proposition}[section]
\newtheorem{theorem}{Theorem}[section]
\begin{document}
\begin{frontmatter}

\title{Cram\'er type moderate deviation theorems for self-normalized processes}
\runtitle{Self-normalized moderate deviations}

\begin{aug}
\author[A]{\inits{Q.-M.}\fnms{Qi-Man}~\snm{Shao}\thanksref
{A}\ead[label=e1]{qmshao@cuhk.edu.hk}}
\and
\author[B]{\inits{W.-X.}\fnms{Wen-Xin}~\snm{Zhou}\corref{}\thanksref{B,C}\ead
[label=e2]{wenxinz@princeton.edu}}
\address[A]{Department of Statistics, The Chinese University of Hong
Kong, Shatin, NT, Hong Kong.\\
\printead{e1}}

\address[B]{Department of Operations Research and Financial Engineering, Princeton University, Princeton, NJ~08544, USA.
\printead{e2}}

\address[C]{School of Mathematics and Statistics, University of
Melbourne, Parkville, VIC 3010, Australia}
\end{aug}

%
\received{\smonth{9} \syear{2013}}
%
\revised{\smonth{8} \syear{2014}}

%
\begin{abstract}
Cram\'er type moderate deviation theorems quantify the accuracy of the
relative error of the normal approximation
and provide theoretical justifications for many commonly used methods
in statistics. In this paper,
we develop a new randomized concentration inequality and establish a
Cram\'er type moderate deviation
theorem for general self-normalized processes which include many
well-known Studentized nonlinear
statistics. In particular, a sharp moderate deviation theorem under
optimal moment conditions is established for Studentized $U$-statistics.
\end{abstract}

%
\begin{keyword}
\kwd{moderate deviation}
\kwd{nonlinear statistics}
\kwd{relative error}
\kwd{self-normalized processes}
\kwd{Studentized statistics}
\kwd{$U$-statistics}
\end{keyword}
\end{frontmatter}

\section{Introduction}
\label{intro}

Let $T_n$ be a sequence of random variables and assume that $T_n$
converges to $Z$ in distribution. The problem we are interested in is
to calculate the tail probability of $T_n$, $\P(T_n \geq x)$, where
$x$ may also depend on $n$ and can go to infinity. Because the true
tail probability of $T_n$ is typically unknown, it is common practice
to use the tail probability of $Z$ to estimate that of $T_n$. A natural
question is how accurate the approximation is? There are two major
approaches for measuring the approximation error. One approach is to
study the absolute error via Berry--Esseen type bounds or Edgeworth
expansions. The other is to estimate the relative error of the tail
probability of $T_n$ against the tail probability of the limiting
distribution, that is,
\[
{
\P(T_n \geq x) \over\P(Z\geq x)},\qquad x \geq0. %
\]
A typical result in this direction is the so-called \textit{Cram\'er type
moderate deviation}. The focus of this paper is to find the largest
possible $a_n$ ($a_n \to\infty$) so that
\[
{\P(T_n \geq x) \over\P(Z\geq x)} = 1 +o(1) %
\]
holds uniformly for $ 0 \leq x \leq a_n$.

The moderate deviation, and other noteworthy limiting properties for
self-normalized sums are now well-understood. More specifically, let
$X_1, X_2, \ldots, X_n$ be independent and identically distributed
(i.i.d.) non-degenerate real-valued random variables with zero means,
and let
\[
S_n = \sum_{i=1}^nX_i\quad\mbox{and}\quad V_n^2 = \sum_{i=1}^nX_i^2
\]
be, respectively, the partial sum and the partial quadratic sum. The
corresponding self-normalized sum is defined as $S_n/V_n$. The study of
the asymptotic behavior of self-normalized sums has a long history.
Here, we refer to \cite{LoganMallowsRiceShepp1973} for weak
convergence and to \cite{GriffinKuelbs1989,GriffinKuelbs1991} for the
law of the iterated logarithms when $X_1$ is in the domain of
attraction of a normal or stable law. \cite{BentkusGotze1996} derived
the optimal Berry--Esseen bound, and \cite{GineGotzeMason1997} proved
that $S_n/V_n$ is asymptotically normal if and only if $X_1$ belongs to
the domain of attraction of a normal law. Under the same necessary and
sufficient conditions, \cite{CsorgoSzyszkowiczWang2003} proved a
self-normalized analogue of the weak invariance principle. It should be
noted that all of these limiting properties also hold for the
standardized sums. However, in contrast to the large deviation
asymptotics for the standardized sums, which require a finite moment
generating function of $X_1$, \cite{Shao1997} proved a self-normalized
large deviation for $S_n/V_n$ without any moment assumptions. Moreover,
\cite{Shao1999} established a self-normalized Cram\'er type moderate
deviation theorem under a finite third moment, that is, if $\e
\llvert X_1\rrvert ^{3}<\infty$, then
\begin{equation}
\frac{\P( S_n/V_n \geq x )}{1-\Phi(x)} \rightarrow1\qquad\mbox{holds
uniformly for } 0\leq x \leq o
\bigl(n^{1/6}\bigr), \label{ld-t0}
\end{equation}
where $\Phi(\cdot)$ denotes the standard normal distribution
function. Result (\ref{ld-t0}) was further extended to independent
(not necessarily identically distributed) random variables by \cite
{JingShaoWang2003} under a Lindeberg type condition. In particular, for
independent random variables with $\e X_i=0$ and $\e\llvert
X_i\rrvert ^3<\infty$,
the general result in \cite{JingShaoWang2003} gives
\begin{equation}
{ \P({S_n/ V_n} \geq x) \over
1-\Phi(x)} = 1 + O(1) (1+x )^3 { \sum_{i=1}^n \e\llvert X_i\rrvert ^3
\over
(\sum_{i=1}^n \e X_i^2)^{3/2}}
\label{md-t}
\end{equation}
for $0\leq x\leq(\sum_{i=1}^n \e X_i^2)^{1/2}/ (\sum_{i=1}^n \e
\llvert X_i\rrvert ^3)^{1/3}$.

Over the past two decades, there has been significant progress in the
development of the self-normalized limit theory. For a systematic
presentation of the general self-normalized limit theory and its
statistical applications, we refer to \cite{PenaLaiShao2009}.

The main purpose of this paper is to extend (\ref{md-t}) to more
general self-normalized processes, including many commonly used
Studentized statistics, in particular, Student's $t$-statistic and
Studentized $U$-statistics. Notice that the proof in \cite
{JingShaoWang2003} is lengthy and complicated, and their method is
difficult to adopt for general self-normalized processes. The proof in
this paper is based on a new randomized concentration inequality and
the method of conjugated distributions (also known as the change of
measure method), which opens a new approach to studying self-normalized
limit theorems.

The rest of this paper is organized as follows. The general result is
presented in Section~\ref{mainresultssec}. To illustrate the
sharpness of the general result, a result similar to (\ref{ld-t0}) and
(\ref{md-t}) is obtained for Studentized $U$-statistics in
Section~\ref
{Self-Usec}. Applications to other Studentized statistics will be
discussed in our future work. To establish the general Cram\'er type
moderation theorem, a novel randomized concentration inequality is
proved in Section~\ref{con-ineqsec}. The proofs of the main results
and key technical lemmas are given in Sections~\ref{proof1sec} and
\ref{proof2sec}. Other technical proofs are provided in the \hyperref[appe]{Appendix}.

\section{Moderate deviations for self-normalized processes}
\label{mainresultssec}

Our research on self-normalized processes is motivated by Studentized
nonlinear statistics. Nonlinear statistics are the building blocks in
various statistical inference problems. It is known that many of these
statistics can be written as a partial sum plus a negligible term.
Typical examples include $U$-statistics, multi-sample $U$-statistics,
$L$-statistics, random sums and functions of nonlinear statistics. We
refer to \cite{ChenShao2007} for a unified approach to uniform and
non-uniform Berry--Esseen bounds for standardized nonlinear statistics.

Assume that the nonlinear process of interest can be decomposed as a
standardized partial sum of independent random variables plus a
remainder, that is,
\[
{ 1 \over\sigma} \Biggl(\sum_{i=1}^n
\xi_i + D_{1n} \Biggr),
\]
where $\xi_1,\ldots,\xi_n$ are independent random variables satisfying
\begin{equation}
\e\xi_i =0\qquad\mbox{for } i =1, \ldots, n\quad\mbox{and}\quad \sum
_{i=1}^n\e\xi_i^2
=1, \label{200}
\end{equation}
and where $D_{1n}=D_{1n}(\xi_1,\ldots,\xi_n)$ is a measurable
function of $\{\xi_i\}_{i=1}^n$. Because $\sigma$ is typically
unknown, a self-normalized process\vspace{-2pt}
\[
T_n = { 1 \over\widehat{\sigma}} \Biggl( \sum
_{i=1}^n \xi_i + D_{1n}
\Biggr) %
\]
is more commonly used in practice, where $\widehat{\sigma}$ is an
estimator of $\sigma$. Assume that $\widehat{\sigma}$ can be written as\vspace{-2pt}
\[
\widehat{\sigma} = \Biggl\{ \Biggl(\sum_{i=1}^n
\xi_i^2 \Biggr) (1+D_{2n})\Biggr
\}^{1/2}, %
\]
where $D_{2n}$ is a measurable function of $\{\xi_i\}_{i=1}^n$.
Without loss of generality and for the sake of convenience,
we assume $\sigma=1$. Therefore, under the assumptions in (\ref
{200}), we can rewrite the self-normalized process $T_n$ as\vspace{-2pt}
\begin{equation}
T_n=\frac{W_n+D_{1n}}{V_n(1+D_{2n})^{1/2}}, \label{stu-stat}
\end{equation}
where\vspace{-2pt}
\[
W_n=\sum_{i=1}^n
\xi_i,\qquad V_n = \Biggl( \sum_{i=1}^n
\xi_i^2 \Biggr)^{1/2}. %
\]

Essentially, this formulation (\ref{stu-stat}) states that, for a
nonlinear process that be can written as a linear process plus a
negligible remainder, it is natural to expect that the corresponding
normalizing term is dominated by a quadratic process. To ensure that
$T_n$ is well-defined, it is assumed implicitly in (\ref{stu-stat})
that the random variable $D_{2n}$ satisfies $1+D_{2n}>0$. Examples
satisfying (\ref{stu-stat}) include the $t$-statistic, Studentized $U$-
and $L$-statistics. See \cite{WangJingZhao2000} and the references
therein for more details.

In this section, we establish a general Cram\'er type moderate
deviation theorem for a self-normalized process $T_n$ in the form of
(\ref{stu-stat}). We start by introducing some of the basic notation
that is frequently used throughout this paper. For $x \geq1$, write\vspace{-2pt}
\begin{equation}
L_{n,x} = \sum_{i=1}^n
\delta_{i,x} , \qquad I_{n,x} = \e\exp\bigl(
xW_n-x^2V_n^2/2 \bigr) = \prod
_{i=1}^n \e\exp\bigl( \xi_{i,x}-
\xi_{i,x}^2/2 \bigr), \label{deix}
\end{equation}
where $ \delta_{i,x} = \e\xi_{i,x}^2 I(\llvert \xi_{i,x} \rrvert >1)
+ \e\llvert \xi
_{i,x}\rrvert ^3 I( \llvert \xi_{i,x} \rrvert \leq1) $ with $\xi
_{i,x } := x \xi_i$. For
$i=1,\ldots, n$, let $D_{1n}^{(i)}$ and $D_{2n}^{(i)}$ be arbitrary
measurable functions of $\{\xi_j\}_{j=1, j\neq i}^n$, such that $\{
D_{1n}^{(i)}, D_{2n}^{(i)}\}$ and $\xi_i$ are independent. Moreover, define
%
\begin{eqnarray}\label{rn1}
R_{n,x} & =& I_{n,x}^{-1} \times\Biggl( \e\bigl\{
\bigl(x\llvert D_{1n} \rrvert+x^2 \llvert
D_{2n} \rrvert\bigr) e^{\sum_{j=1}^n ( \xi_{j,x} - \xi_{j,x}^2 /2) }
\bigr\} \nonumber\\[-8pt]\\[-8pt]\nonumber
&&{} +\sum_{i=1}^n\e\bigl[ \min\bigl(
\llvert\xi_{i,x} \rrvert,1 \bigr) \bigl\{ \bigl\llvert
D_{1n}-D_{1n}^{(i)}\bigr\rrvert+x \bigl\llvert
D_{2n}-D_{2n}^{(i)}\bigr\rrvert\bigr\}
e^{\sum_{j \neq i}
(\xi_{j,x} - \xi_{j,x}^2/2)} \bigr] \Biggr).
\nonumber
\end{eqnarray}
Here, and in the sequel, we use $\sum_{j \neq i} = \sum_{j=1, j\neq
i}^n$ for brevity.

Now we are ready to present the main results.

%
\begin{theorem} \label{t21}
Let $T_n$ be defined in (\ref{stu-stat}) under condition (\ref
{200}). Then
there exist positive absolute constants $C_1$--$C_4$ and $c_1$ such that\vspace{-3pt}
\begin{equation}
\P(T_n\geq x) \geq\bigl\{1-\Phi(x)\bigr\} \exp\{ -C_1
L_{n,x} \} ( 1- C_2 R_{n,x} ) \label{cmd-lbd}
\end{equation}
and\vspace{-3pt}
%
\begin{eqnarray}\label{cmd-ubd}
\P(T_n\geq x) & \leq&\bigl\{ 1-\Phi(x) \bigr\} \exp\{ C_3
L_{n,x}\} ( 1+ C_4 R_{n,x} )
\nonumber\\[-8pt]\\[-8pt]\nonumber
&&{} + \P\bigl( x\llvert D_{1n} \rrvert> V_n / 4
\bigr) + \P\bigl( x^2 \llvert D_{2n} \rrvert>1/ 4 \bigr)
\end{eqnarray}
for all $x\geq1$ satisfying\vspace{-3pt}
\begin{equation}
\max_{1\leq i\leq n} \delta_{i,x} \leq1 \label{c1}
\end{equation}
and\vspace{-3pt}
\begin{equation}
L_{n,x} \leq c_1 x^2 . \label{rc}
\end{equation}
\end{theorem}

%
\begin{remark} \label{comment1}
The quantity $L_{n,x}$ in (\ref{deix}) is essentially the same as
the factor $\De_{n,x}$ in \cite{JingShaoWang2003}, which is the
leading term that describes the accuracy of the relative normal
approximation error. To deal with the self-normalized nonlinear process
$T_n$, first we need to ``linearize'' it in a proper way, although at
the cost of introducing some complex perturbation terms. The linearized
term is $x W_n - x^2 V_n^2/2$, and its exponential moment is denoted by
$I_{n,x}$ as in (\ref{deix}). A randomized concentration inequality is
therefore developed (see Section~\ref{con-ineqsec}) to cope with
these random perturbations which lead to the quantity $R_{n,x}$ given
in (\ref{rn1}). Similar quantities also appear in the Berry--Esseen
bounds for nonlinear statistics. See, for example, Theorems~2.1 and 2.2
in \cite{ChenShao2007}.
\end{remark}

Theorem~\ref{t21} provides the upper and lower bounds of the relative
errors for $x\geq1$. To cover the case of $0 \leq x \leq1$, we
present a rough estimate of the absolute error in the next theorem, and
refer to \cite{ShaoZhangZhou2014} for the general Berry--Esseen bounds
for self-normalized processes.

%
\begin{theorem} \label{t23}
There exists an absolute constant $C>1$ such that for all $x\geq0$,
\begin{equation}
\bigl\llvert\P(T_n \leq x) -\Phi(x) \bigr\rrvert\leq C
\breve{R}_{n,x} , \label{sc1}
\end{equation}
where
%
\begin{eqnarray}\label{rn2}
\breve{R}_{n,x} & :=& L_{n, 1+ x} + \e\llvert D_{1n}
\rrvert+x \e\llvert D_{2n} \rrvert
\nonumber\\[-8pt]\\[-8pt]\nonumber
&&{}+ \sum_{i=1}^n \e\bigl[
\xi_i I\bigl\{\llvert\xi_i
\rrvert\leq1/(1+x)\bigr\} \bigl\{ \bigl\llvert D_{1n}-D_{1n}^{(i)}
\bigr\rrvert+x \bigl\llvert D_{2n}-D_{2n}^{(i)}
\bigr\rrvert\bigr\} \bigr]
\nonumber
\end{eqnarray}
for $L_{n,1+x}$ as in (\ref{deix}).
\end{theorem}

The proof of Theorem~\ref{t23} is deferred to the \hyperref[appe]{Appendix}. In
particular, when $0\leq x\leq1$, the quantity $ L_{n,1+x}$ satisfies
\begin{eqnarray}
L_{n,1+x} & =& (1+x)^2 \sum_{i=1}^n
\e\xi_i^2 I\bigl\{ \llvert\xi_i \rrvert
>1/(1+x) \bigr\} + (1+x)^3 \sum_{i=1}^n
\e\llvert\xi_i \rrvert^3 I\bigl\{ \llvert
\xi_i \rrvert\leq1/(1+x) \bigr\}
\nonumber
\\
& \leq& (1+x)^2 \sum_{i=1}^n
\e\xi_i^2 I\bigl( \llvert\xi_i \rrvert
>1/2 \bigr) + (1+x)^3 \sum_{i=1}^n
\e\llvert\xi_i \rrvert^3 I\bigl( \llvert
\xi_i \rrvert\leq1\bigr)
\nonumber
\\
& \leq& (1+x)^2 \sum_{i=1}^n
\e\xi_i^2 I\bigl( \llvert\xi_i \rrvert>1
\bigr) + (1+x)^2 \sum_{i=1}^n
\e\xi_i^2 I\bigl( 1/2 < \llvert\xi_i
\rrvert\leq1 \bigr)
\nonumber
\\
&&{}+ (1+x)^3 \sum_{i=1}^n
\e\llvert\xi_i \rrvert^3 I\bigl( \llvert
\xi_i \rrvert\leq1 \bigr) ,
\nonumber
\end{eqnarray}
which can be further bounded, up to a constant, by
\[
\sum_{i=1}^n \e\xi_i^2
I\bigl( \llvert\xi_i \rrvert>1\bigr) + \sum
_{i=1}^n\e\llvert\xi_i \rrvert
^3 I\bigl( \llvert\xi_i \rrvert\leq1 \bigr).
\]

%
\begin{remark} \label{r20}
1.~When $D_{1n}=D_{2n}=0$, $T_n$ reduces to the
self-normalized sum of independent random variables, and thus
Theorems~\ref{t21} and~\ref{t23} together immediately imply the
main result in \cite{JingShaoWang2003}. The proof therein, however, is
lengthy and fairly complicated, especially the proof of Proposition
5.4, and can hardly be applied to prove the general result of Theorem
\ref{t21}. The proof of our Theorem~\ref{t21} is shorter and more
transparent.

2.~$D_{1n}$ and $D_{2n}$ in the definitions of $R_{n,x}$
and $\breve{R}_{n,x}$ can be replaced by any non-negative random
variables $D_{3n}$ and $D_{4n}$, respectively, provided that $\llvert
D_{1n} \rrvert
\leq D_{3n}$, $\llvert D_{2n} \rrvert \leq D_{4n}$.

3.~Condition (\ref{200}) implies that $\xi_i$ actually depends
on both $n$ and $i$; that is, $\xi_i$ denotes $\xi_{ni}$, which is an
array of independent random variables.
\end{remark}

\section{Studentized $U$-statistics}
\label{Self-Usec}

As a prototypical example of the self-normalized processes given in
(\ref{stu-stat}), we are particularly interested in Studentized
$U$-statistics. In this section, we apply Theorems~\ref{t21} and~\ref
{t23} to Studentized $U$-statistics and obtain a sharp Cram\'er
moderate deviation under optimal moment conditions.

Let $X_1, X_2, \ldots, X_n$ be a sequence of i.i.d. random variables
and let $h:\mathds{R}^m\rightarrow\mathds{R}$ be a symmetric Borel
measurable function of $m$ variables, where $2 \leq m < n/2$ is fixed.
The Hoeffding's $U$-statistic
with a kernel $h$ of degree $m$ is defined as (Hoeffding \cite{Hoeffding1948})
\begin{eqnarray*}
U_n=\frac{1}{{n \choose m}}\sum_{1\leq i_1<\cdots<i_m\leq
n}h(X_{i_1},
\ldots,X_{i_m}),
\end{eqnarray*}
which is an unbiased estimate of $\theta=\e h(X_1,\ldots,X_m)$. Let
\[
h_1(x)=\e\bigl\{ h(X_1, X_2,
\ldots,X_m) \mid X_1=x \bigr\} ,\qquad x \in\mathbb{R}
\]
and
\begin{equation}
\sigma^2 =\var\bigl\{ h_1(X_1) \bigr\},
\qquad\sigma_h^2 = \var\bigl\{h(X_1,
X_2,\ldots,X_m)\bigr\}. \label{vardef}
\end{equation}
Assume $0< \sigma^2 < \infty$, then the standardized non-degenerate
$U$-statistic is given by
\begin{eqnarray*}
Z_n = {\sqrt{n} \over m \sigma} (U_n-\theta).
\end{eqnarray*}

The $U$-statistic is a basic statistic and its asymptotic properties
have been extensively studied in the literature. We refer to \cite
{KoroljukBorovskich1994} for a systematic presentation of the theory of
$U$-statistics. For uniform Berry--Esseen bounds, see \cite
{Filippova1962,GramsSerfling1973,Bickel1974,ChanWierman1977,CallaertJanssen1978,Serfling1980,vanZwet1984,Friedrich1989,AlberinkBentkus2001,AlberinkBentkus2002,WangWeber2006} and \cite{ChenShao2007}. We refer to \cite
{EichelsbacherLowe1995,KeenerRobinsonWeber1998} and \cite
{BorovskikhWeber2003a,BorovskikhWeber2003b} for large and moderate
deviation asymptotics.

Because $\sigma$ is usually unknown, we are interested in the
following Studentized $U$-statistic (Arvensen \cite{Arvesen1969}),
which is widely used in practice:
\begin{eqnarray*}
T_n={ \sqrt{n} \over m s_1} (U_n-\theta),
\end{eqnarray*}
where $s_1^2$ denotes the leave-one-out Jackknife estimator of $\sigma
^2$ given by
%
\begin{eqnarray} \label{s1}
s_1^2 &=& \frac{(n-1)}{(n-m)^2}\sum
_{i=1}^n(q_i-U_n)^2\qquad\mbox{with}
\nonumber\\[-8pt]\\[-8pt]\nonumber
q_i & =& \frac{1}{{n-1 \choose m-1}} \mathop{\sum_{ 1\leq\ell_1< \cdots<
\ell_{m-1}\leq n}}_{\ell_j \neq i, j=1, \ldots,
m-1}h(X_i,
X_{\ell_1},\ldots,X_{\ell_{m-1}} ).
\nonumber
\end{eqnarray}
In contrast to the standardized $U$-statistics, few optimal limit
theorems are available for Studentized $U$-statistics in the
literature. A uniform Berry--Esseen bound for Studentized
$U$-statistics was proved in \cite{WangJingZhao2000} for $m=2$ and
$\e\llvert h(X_1, X_2)\rrvert ^3 < \infty$. However, a finite third
moment of
$h(X_1, X_2)$ may not be an optimal condition. Partial results on Cram\'
er type moderate deviation were obtained in \cite
{VandemaeleVeraverbeke1985,Wang1998} and \cite{LaiShaoWang2011}.

As a direct but non-trivial consequence of Theorems~\ref{t21} and
\ref{t23}, we establish the following sharp Cram\'er type moderate
deviation theorem for the Studentized $U$-statistic~$T_n$.

%
\begin{theorem} \label{t11}
Assume that $\sigma_p : = ( \e\llvert h_1(X_1)-\theta\rrvert ^p
)^{1/p} < \infty$
for some $2 < p \leq3$. Suppose that there are constants $c_0\geq1$
and $\tau\geq0$ such that
\begin{eqnarray}
\bigl\{ h(x_1,\ldots,x_m) - \theta\bigr
\}^2 \leq c_0 \Biggl[ \tau\sigma^2 + \sum
_{i=1}^m \bigl\{ h_1(x_i)-
\theta\bigr\}^2 \Biggr]. \label{k-c}
\end{eqnarray}
Then there exist positive constants $C_1$ and $c_1$ independent of $n$
such that
%
\begin{equation}
\frac{ \P(T_n\geq x)}{1-\Phi(x)}=1 + O(1) \biggl\{ ( \sigma_p /
\sigma)^p \frac{(1+x)^{p} }{n^{ p/2 -1 } } +( \sqrt{a_m}+
\sigma_h/\sigma) \frac{(1+x)^3}{\sqrt{n}} \biggr\} \label{t11a}
\end{equation}
holds uniformly for
\[
0 \leq x \leq c_1 \min\bigl\{ (\sigma/\sigma_p)
n^{1/2-1/p}, (n/a_m)^{1/6} \bigr\}, %
\]
where $\llvert O(1) \rrvert \leq C_1$ and $a_m=\max\{c_0 \tau, c_0+m\}
$. In particular,
\begin{equation}
{ \P(T_n\geq x) \over1-\Phi(x) } \to1 \label{t11b}
\end{equation}
holds uniformly in $ x\in[ 0, o(n^{1/2-1/p}) )$.
\end{theorem}

It is easy to verify that condition (\ref{k-c}) is satisfied for the
$t$-statistic $(h(x_1, x_2) = (x_1+x_2)/2$ with $c_0=2$ and $\tau=0$),
sample variance ($h(x_1, x_2) = (x_1-x_2)^2/2$, $c_0=10$, $\tau=\theta
^2/\sigma^2$), Gini's mean difference ($h(x_1, x_2) = \llvert
x_1-x_2\rrvert $,
$c_0=8$, $\tau=\theta^2/\sigma^2$) and one-sample Wilcoxon's
statistic ($h(x_1, x_2) = I( x_1+x_2 \leq0)$, $c_0=1$, $\tau=
1/\sigma^2$). Although it may be interesting to investigate whether
condition (\ref{k-c}) can be weakened, it seems that it is impossible
to remove condition (\ref{k-c}) completely. We also note that result
(\ref{t11b}) was earlier proved in \cite{LaiShaoWang2011} for $m=2$.
However, the approach used therein can hardly be extended to the case
$m\geq3$.

\section{A randomized concentration inequality}
\label{con-ineqsec}

To prove Theorem~\ref{t21}, we first develop a randomized
concentration inequality via Stein's method. Stein's method (Stein
\cite{Stein1986}) is a powerful tool in the normal and non-normal
approximation of both independent and dependent variables, and the
concentration inequality is a useful approach in Stein's method. We
refer to \cite{ChenGoldsteinShao2010} for systematic coverage of the
method and recent developments in both theory and applications and to
\cite{ChenShao2007} for uniform and non-uniform Berry--Esseen bounds
for nonlinear statistics using the concentration inequality approach.

Let $\xi_1, \dots,\xi_n$ be independent random variables such that
\begin{eqnarray*}
\e\xi_i =0\qquad\mbox{for } i=1,2,\ldots,n\quad\mbox{and}\quad\sum
_{i=1}^n\e\xi_i^2
=1.
\end{eqnarray*}
Let
\begin{equation}
W=\sum_{i=1}^n\xi_i,\qquad
V^2 = \sum_{i=1}^n
\xi_i^2 \label{WVdef}
\end{equation}
and let $\De_1=\De_1(\xi_1,\ldots,\xi_n)$ and $\De_2=\De_2(\xi
_1,\ldots,\xi_n)$ be two measurable functions of $\xi_1,\ldots,\xi
_n$. Moreover, set
\begin{eqnarray*}
\beta_2=\sum_{i=1}^n\e
\xi_i^2 I\bigl( \llvert\xi_i \rrvert>1
\bigr) , \qquad\beta_3= \sum_{i=1}^n
\e\llvert\xi_i \rrvert^3 I\bigl(\llvert
\xi_i \rrvert\leq1 \bigr).
\end{eqnarray*}

%
\begin{theorem} \label{mod-con-ineq}
For each $1\leq i\leq n$, let $\De_1^{(i)}$ and $\De_2^{(i)}$ be
random variables such that $\xi_i$ and $(\De_1^{(i)},\De
_2^{(i)},W-\xi_i)$ are independent. Then
%
\begin{eqnarray}
\P(\De_1 \leq W \leq\De_2) & \leq& 17(
\beta_2+ \beta_3 ) + 5 \e\llvert\De_2-
\De_1\rrvert+ 2 \sum_{i=1}^n
\sum_{j=1}^2 \e\bigl\llvert
\xi_i \bigl\{ \De_j-\De_j^{(i)}
\bigr\} \bigr\rrvert. \label{con-ineq-1}
\end{eqnarray}
\end{theorem}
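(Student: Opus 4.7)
The approach is Stein's method, following the concentration-inequality template of \cite{ChenShao_2007}, but with the crucial twist that the test function depends on the random endpoints $\Delta_1,\Delta_2$ and has to be ``decoupled'' from each $\xi_i$ by passing to $\Delta_j^{(i)}$. First I would truncate: set $\bar\xi_i=\xi_i I(|\xi_i|\le 1)-\e[\xi_i I(|\xi_i|\le 1)]$ and $\bar W=\sum_i\bar\xi_i$, so that $|\bar\xi_i|\le 2$ and $\e\bar\xi_i=0$. Absorbing the deterministic shift $\sum_i\e[\xi_i I(|\xi_i|\le 1)]$ into $\Delta_1,\Delta_2$ and using $\sum_i\P(|\xi_i|>1)\le\beta_2$, it suffices to bound $\P(\Delta_1\le\bar W\le\Delta_2)$ up to an additive $\beta_2$.

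Next, introduce the smoothed indicator
\[
f(w)=f_{\Delta_1,\Delta_2}(w):=\begin{cases}-(\Delta_2-\Delta_1)/2-1 & w<\Delta_1-1,\\ w-(\Delta_1+\Delta_2)/2 & \Delta_1-1\le w\le\Delta_2+1,\\ (\Delta_2-\Delta_1)/2+1 & w>\Delta_2+1,\end{cases}
\]
whose a.e.\ derivative $f'(w)=I(\Delta_1-1\le w\le\Delta_2+1)$ dominates $I(\Delta_1\le w\le\Delta_2)$, and which is $1$-Lipschitz in each of $\Delta_1,\Delta_2$. Write $f^{(i)}$ for the same function with $(\Delta_1,\Delta_2)$ replaced by $(\Delta_1^{(i)},\Delta_2^{(i)})$. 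By the postulated independence, $\e[\bar\xi_i f^{(i)}(\bar W-\bar\xi_i)]=0$, so the Stein identity $\e[\bar W f(\bar W)]=\sum_i\e[\bar\xi_i f(\bar W)]$ becomes
\[
\e[\bar W f(\bar W)]=\sum_i A_i+\sum_i B_i,\quad A_i=\e[\bar\xi_i\{f(\bar W)-f^{(i)}(\bar W)\}],\ B_i=\e[\bar\xi_i\{f^{(i)}(\bar W)-f^{(i)}(\bar W-\bar\xi_i)\}].
\]
Lipschitz continuity of $f$ in $(\Delta_1,\Delta_2)$ immediately gives $|A_i|\le\e[|\bar\xi_i|(|\Delta_1-\Delta_1^{(i)}|+|\Delta_2-\Delta_2^{(i)}|)]$, producing the last term in \eqref{con-ineq-1} after summing.

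For $B_i$ I would use the standard integral representation $\bar\xi_i\{f^{(i)}(\bar W)-f^{(i)}(\bar W-\bar\xi_i)\}=\int(f^{(i)})'(\bar W-\bar\xi_i+s)\hat K_i(s)\,ds$ with a non-negative kernel $\hat K_i$ supported in $|s|\le|\bar\xi_i|$ and $\int\hat K_i(s)\,ds=\bar\xi_i^2$. Since $(f^{(i)})'\ge I(\Delta_1^{(i)}\le\,\cdot\,\le\Delta_2^{(i)})$ and $\sum_i\e\bar\xi_i^2=1-O(\beta_2)$, I would show that $\sum_i B_i$ provides a lower bound of the form $\P(\Delta_1\le\bar W\le\Delta_2)-C(\beta_2+\beta_3)-C\e|\Delta_2-\Delta_1|$, by using $|\bar\xi_i|,|s|\le 2$ to shift the widened window back to $[\Delta_1,\Delta_2]$ and moving $\Delta_j^{(i)}\leftrightarrow\Delta_j$ through the Lipschitz trick (again producing terms already in $\sum_i A_i$). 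Finally, $|\e[\bar W f(\bar W)]|\le\e|\bar W f(\bar W)|$ is controlled by expanding $f(\bar W)$ on the event $\{\bar W\in[\Delta_1-1,\Delta_2+1]\}$ and using $|\bar W|\le|\bar W-\bar W^{(i)}|+|\bar W^{(i)}|$ together with independence of $\bar W^{(i)}$ and $\Delta_j^{(i)}$ to peel off the factor $|\bar W|$ and recover $\e|\Delta_2-\Delta_1|$ with an absolute constant.

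\textbf{Main obstacle.} Everything outside the decoupling step is routine Stein/concentration bookkeeping; the real work is step four -- proving $\sum_i B_i\gtrsim\P(\Delta_1\le\bar W\le\Delta_2)$ up to the allowed errors. Two difficulties collide there: the indicator in $(f^{(i)})'$ uses the \emph{wrong} (index-$i$) endpoints $\Delta_j^{(i)}$ and is evaluated at the \emph{shifted} argument $\bar W-\bar\xi_i+s$, so one must simultaneously move the endpoints back to $\Delta_j$ (absorbing the cost into $\e|\xi_i\{\Delta_j-\Delta_j^{(i)}\}|$) and move the argument back to $\bar W$ (absorbing the cost into $\beta_3$ via $|s|\le|\bar\xi_i|$ and the bound $\hat K_i(s)\le|\bar\xi_i|I(|s|\le|\bar\xi_i|)$). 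Getting clean absolute constants $17$ and $5$ requires careful tracking of which kernel integrals collapse to $\sum_i\e\bar\xi_i^2\le 1$ versus which leave a residual controlled by $\beta_3$; this is where I would be most cautious.
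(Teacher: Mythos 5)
There is a genuine gap, and it sits exactly where you flagged your ``main obstacle''---but it is structural, not a matter of constant-tracking: your choice of test function cannot produce \eqref{con-ineq-1}. First, the widening by $1$ is fatal. Because you truncate at level $1$ you take $f'(w)=I(\De_1-1\le w\le \De_2+1)$, so $\|f\|_\infty=(\De_2-\De_1)/2+1$, and the left side of your Stein identity satisfies only $|\e\{\bar W f(\bar W)\}|\le \e |\bar W|\{(\De_2-\De_1)/2+1\}$, which is of order one no matter how small $\beta_2+\beta_3$ and $\e|\De_2-\De_1|$ are. Concretely, take $\De_1=\De_2=0$ deterministic and $\xi_i$ i.i.d.\ bounded by $n^{-1/2}$: then $f(w)=\max\{-1,\min(w,1)\}$ and $\e\{\bar W f(\bar W)\}\approx \e\min(\bar W^2,|\bar W|)\asymp 1$, while the right side of \eqref{con-ineq-1} tends to $0$. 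Since your scheme concludes via $\P(\De_1\le \bar W\le\De_2)\lesssim \sum_i B_i+\mbox{errors}=\e\{\bar W f(\bar W)\}-\sum_i A_i+\mbox{errors}$, this order-one term cannot be absorbed. Your proposed rescue---``peeling off $|\bar W|$'' by decoupling---cannot work either: $\De_2-\De_1$ depends on all $n$ coordinates, and passing to $\De_j^{(i)}$ removes dependence on the single coordinate $\xi_i$ only; there is no index $i$ attached to $\e|\bar W(\De_2-\De_1)|$ against which to decouple.

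Second, even if you repair the widening to the standard small $\de=(\beta_2+\beta_3)/2$ (which is what the classical concentration argument requires, together with the $\beta_3/\de$ balancing when extracting the kernel mass at $|t|\le\de$---also absent from your plan), the Lipschitz ramp still only gives $\|f\|_\infty\le(\De_2-\De_1)/2+\de$ and hence the bound $\e|W(\De_2-\De_1)|$: precisely the Chen--Shao (2007) inequality that the paper's remark after the theorem says is insufficient, and not the stated $\e|\De_2-\De_1|$. The paper's proof avoids the ramp altogether. It takes $f_x$ to be the bounded solution of Stein's equation \eqref{stein-eqn}, applies it with the randomly widened endpoints $\De_{1,\de},\De_{2,\de}$, and exploits three properties: $|wf_x(w)|\le 1$, $\sup_{x,w}|\frac{\partial}{\partial x}f_x(w)|\le 1$ (this yields your $A_i$-type decoupling term, which you handle the same way), and $|wf_x(w)-(w+t)f_x(w+t)|\le\min\{1,(|w|+\s{2\pi}/4)|t|\}$. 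Crucially, Stein's equation converts $f'$ inside the kernel integral into $wf+I(\cdot\le x)-\Phi(x)$: the indicator piece delivers the concentration lower bound $\tfrac12\P(\De_1\le W\le\De_2)-\de$, the $\Phi$ piece gives $\e[V^2\{\Phi(\De_2)-\Phi(\De_1)\}]$, controlled by $\e|\De_2-\De_1|$ and $\beta_2,\beta_3$ via $\Phi(\De_2)-\Phi(\De_1)\le(\De_2-\De_1)/\s{2\pi}$ and $\e\{\sum_i(\bar\xi_i^2-\e\bar\xi_i^2)\}^2\le\beta_3$, and the uniform bound $|wf_x(w)|\le 1$ is exactly the mechanism that decouples $|W|$ from $\De_2-\De_1$. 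Without that bounded Stein solution, the term $5\,\e|\De_2-\De_1|$ in \eqref{con-ineq-1} is out of reach.
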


We note that a similar result was obtained by \cite{ChenShao2007} with
$\e\llvert W ( \Delta_2 - \Delta_1)\rrvert $ instead of $\e\llvert
\Delta_2 - \Delta
_1\rrvert $ in (\ref{con-ineq-1}). However, using the term $\e\llvert
W ( \Delta_2
- \Delta_1)\rrvert $ will not yield the sharp bound in (\ref{t11a}) when
Theorem~\ref{t21} is applied to Studentized $U$-statistics. This
provides our main motivation for developing the new concentration
inequality (\ref{con-ineq-1}).

\begin{pf*}{Proof of Theorem~\ref{mod-con-ineq}}
Assume without
loss of generality that $\Delta_1 \leq\Delta_2$. The proof is based
on Stein's method. For every $x \in\mathds{R}$, let $f_x(w)$ be the
solution to Stein's equation
\begin{eqnarray}
f_x'(w)-w f_x(w)=I(w\leq x)-\Phi(x),
\label{stein-eqn}
\end{eqnarray}
which is given by
\begin{eqnarray}
\label{sol-form} f_x(w) = \cases{ \displaystyle\sqrt{2\pi}
e^{w^2/2} \Phi(w)\bigl\{ 1-\Phi(x) \bigr\}, &\quad$w\leq x$,
\cr
\displaystyle\sqrt{2\pi} e^{w^2/2} \Phi(x) \bigl\{ 1-\Phi(w) \bigr\}, &
\quad$w>x$.}
\end{eqnarray}
Set $f_{x,y}=f_x-f_y$ for any $x,y \in\mathds{R}$, $\delta= (\beta
_2 + \beta_3)/2$ and
\[
\De_{1,\delta} = \De_1-\delta,\qquad
\De_{2,\delta} = \De_2+\delta,\qquad
\De_{1,\delta}^{(i)} = \De_1^{(i)} -\delta,\qquad
\De_{2,\delta}^{(i)}= \De_2^{(i)} + \delta. %
\]

Noting that $\xi_i$ and $ ( \Delta_1^{(i)}, \Delta_2^{(i)},
W^{(i)}=W-\xi_i )$ are independent and $\e\xi_i=0$ for $i=1,\ldots,
n$, we have
%
\begin{eqnarray} \label{deco-1}
\e\bigl\{ W f_{\De_{2,\delta},\De_{1,\delta}}(W) \bigr\} & =& \sum_{i=1}^n
\e\bigl\{ \xi_i f_{\De_{2,\delta},\De_{1,\delta}}(W) \bigr\}
\nonumber
\\
&=& \sum_{i=1}^n \e\bigl[
\xi_i \bigl\{ f_{\De_{2,\delta},\De_{1,\delta}}(W) - f_{\De
_{2,\delta},\De_{1,\delta}}
\bigl(W^{(i)}\bigr) \bigr\} \bigr]
\nonumber\\[-8pt]\\[-8pt]\nonumber
&&{}+\sum_{i=1}^n\e\bigl[
\xi_i \bigl\{ f_{\De_{2,\delta},\De_{1,\delta
}}\bigl(W^{(i)}\bigr) -
f_{\De^{(i)}_{2,\delta},\De^{(i)}_{1,\delta
}}\bigl(W^{(i)}\bigr) \bigr\} \bigr]
\nonumber
\\
&:=& H_{1}+H_{2}.\nonumber
\end{eqnarray}
By (\ref{sol-form}),
\begin{eqnarray*}
\frac{\partial}{\partial x}f_x(w) = \cases{
-e^{(w^2-x^2)/2} \Phi(w), &\quad$w\leq x$,
\cr
e^{(w^2-x^2)/2} \bigl\{ 1-\Phi(w) \bigr\} , &\quad$w>x$.}
\end{eqnarray*}
Clearly, $ \sup_{x, w} \llvert \frac{\partial}{\partial
x}f_x(w)\rrvert \leq1$
and it follows that
\begin{equation}
\llvert H_{2}\rrvert\leq\sum_{i=1}^n
\sum_{j=1}^2 \e\bigl\llvert
\xi_i\bigl\{\De_j-\De_j^{(i)}
\bigr\} \bigr\rrvert. \label{estimate-1}
\end{equation}

As for $H_{1}$, let $\hat{k}_i(t) = \xi_i \{ I( -\xi_i \leq t \leq
0) -I( 0 < t \leq-\xi_i ) \}$ satisfying $\hat{k}_i(t) \geq0$ and
$\int_{\mathds{R}} \hat{k}_i(t) \,dt= \xi_i^2$. Observe by (\ref
{stein-eqn}) that
\begin{eqnarray}
&& \xi_i \bigl\{ f_{\De_{2,\delta},\De_{1,\delta}}(W) - f_{\De_{2,\delta
},\De_{1,\delta}}
\bigl(W^{(i)}\bigr) \bigr\}
\nonumber
\\
&&\quad = \xi_i\int_{-\xi_i}^0f'_{\De_{2,\delta},\De_{1,\delta}}(W+t)
\,dt
\nonumber
\\
&&\quad = \int_{\mathds{R}} f'_{\De_{2,\delta},\De_{1,\delta}}(W+t) \hat
{k}_i(t) \,dt
\nonumber
\\
&&\quad = \int_{\mathds{R}} (W+t)f_{\De_{2,\delta},\De_{1,\delta}}(W+t)
\hat{k}_i(t) \,dt
\nonumber
\\
&&\qquad{}+ \xi_i^2 \bigl\{\Phi(\De_{1,\delta})-
\Phi(\De_{2,\delta})\bigr\} + \int_{\mathds{R}}I(
\De_{1,\delta} \leq W +t \leq\De_{2,\delta} ) \hat{k}_i(t)
\,dt.
\nonumber
\end{eqnarray}
Adding up over $1\leq i\leq n$ gives
%
\begin{eqnarray}\label{decom-2}
H_{1} &=& \sum_{i=1}^n\e\int
_{\mathds{R}}(W+t)f_{\De_{2,\delta
},\De_{1,\delta}}(W +t) \hat{k}_i(t)
\,dt +\e\bigl[ V^2 \bigl\{ \Phi(\De_{1,\delta})-\Phi(
\De_{2,\delta})\bigr\} \bigr]\nonumber
\\
&&{}  + \sum_{i=1}^n
\mathbb{E}\int_{\mathds{R}}I(\De_{1,\delta} \leq W +t \leq
\De_{2,\delta} ) \hat{k}_i(t) \,dt
\\
&:=& H_{1 1} + H_{1 2} + H_{1 3}
\nonumber
\end{eqnarray}
for $V^2$ given in (\ref{WVdef}). Following the proof of
(10.59)--(10.61) in \cite{ChenGoldsteinShao2010} (or see (5.6)--(5.8)
in \cite{ChenShao2007}), we have
\begin{equation}
H_{1 3} \geq(1/2) \P( \Delta_1 \leq W \leq
\De_2) - \delta, \label{h13}
\end{equation}
where $\delta=(\beta_2+\beta_3)/2$. Assume that $\delta\leq1/8$.
Otherwise, (\ref{con-ineq-1}) is trivial. To finish the proof of~(\ref
{con-ineq-1}), in view of (\ref{deco-1}), (\ref{estimate-1}), (\ref
{decom-2}) and (\ref{h13}),
it suffices to show that
%
\begin{equation}
\llvert H_{1 2}\rrvert\leq0.6 \e\llvert\De_2 -
\De_1\rrvert+ \beta_2 + 0.5 \beta_3
\label{h12}
\end{equation}
and
%
\begin{equation}
\e\bigl\{W f_{\De_{2,\delta},\De_{1,\delta}}(W) \bigr\} - H_{1 1} \leq
1.75 \e\llvert
\De_2 - \De_1\rrvert+ 7 \beta_2 + 6
\beta_3. \label{new-est}
\end{equation}
Next we prove (\ref{h12}) and (\ref{new-est}), starting with (\ref{h12}).

\begin{pf*}{Proof of (\ref{h12})}
Recall that $\De_1 \leq\De_2$ and $\sum
_{i=1}^n\e\xi_i^2 =1$. Let $\barxi_i = \xi_i I( \llvert \xi_i \rrvert
\leq1)$,
we have
\begin{eqnarray*}
\llvert H_{1 2}\rrvert& =& \e\bigl[ V^2 \bigl\{ \Phi(
\De_2)- \Phi(\De_1) \bigr\}\bigr]
\nonumber
\\
& \leq&\sum_{i=1}^n \e
\xi_i^2 I\bigl( \llvert\xi_i \rrvert> 1
\bigr) + \e\Biggl[ \bigl\{ \Phi(\De_2)- \Phi(\De_1)
\bigr\} \sum_{i=1}^n\xi_i^2
I\bigl( \llvert\xi_i \rrvert\leq1\bigr) \Biggr]
\nonumber
\\
& =& \beta_2 + \e\bigl[ \bigl\{ \Phi(\De_2)- \Phi(
\De_1) \bigr\} \bigr] \sum_{i=1}^n
\e\barxi_i^{ 2} + \e\Biggl[ \bigl\{ \Phi(
\De_2)- \Phi(\De_1) \bigr\} \sum
_{i=1}^n \bigl(\barxi_i^{ 2}
- \e\barxi_i^{ 2} \bigr) \Biggr]
\nonumber
\\
& \leq&\beta_2 + { 1 \over\sqrt{2 \pi}} \e( \De_2 -
\De_1 ) + \e\Biggl\{ \min\biggl( 1, { \De_2 - \De_1 \over\sqrt{2 \pi}}
\biggr) \Biggl\llvert\sum_{i=1}^n\bigl(
\barxi_i^{ 2} - \e\barxi_i^{ 2}
\bigr) \Biggr\rrvert\Biggr\}
\nonumber
\\
& \leq&\beta_2 + { 1 \over\sqrt{2 \pi}} \e( \De_2 -
\De_1 ) + { 1 \over2} \e\min\biggl( 1,
{ \De_2 - \De_1 \over\sqrt{2 \pi}} \biggr)^2 + { 1 \over2} \e\Biggl
\{ \sum_{i=1}^n \bigl(
\barxi_i^{ 2} - \e\barxi_i^{ 2}
\bigr) \Biggr\}^2
\nonumber
\\
& \leq&\beta_2 + { 1 \over\sqrt{2 \pi}} \e( \De_2 -
\De_1 ) + { 1 \over2 \sqrt{ 2 \pi}} \e( \De_2 -
\De_1 ) + { 1 \over2} \beta_3
\nonumber
\\
& \leq&0.6 \e( \De_2 - \De_1 ) + \beta_2 +
0.5 \beta_3,
\end{eqnarray*}
as desired.\vadjust{\goodbreak}
\end{pf*}

\begin{pf*}{Proof of (\ref{new-est})} Observe that
%
\begin{eqnarray}\label{h3}
&& \e\bigl\{ W f_{\De_{2,\delta},\De_{1,\delta}}(W) \bigr\} - H_{1 1}
\nonumber
\\
&&\quad  = \e\bigl\{ W f_{\De_{2,\delta},\De_{1,\delta}}(W) \bigl( 1- V^2\bigr
) \bigr\}
\nonumber\\[-8pt]\\[-8pt]\nonumber
&&\qquad{}+ \sum_{i=1}^n\e\int\bigl\{ W
f_{\De_{2,\delta},\De
_{1,\delta}}(W) - (W+t) f_{\De_{2,\delta},\De_{1,\delta}}(W+t)\bigr\}
\hatk_i(t)
\,dt
\nonumber
\\
&&\quad := H_{3 1} + H_{3 2}. \nonumber
\end{eqnarray}
Recall that $\sup_{x, w} \llvert \frac{\partial}{\partial
x}f_x(w)\rrvert \leq
1$. This, together with the following basic properties of $f_x(w)$
(see, e.g., Lemma 2.3 in \cite{ChenGoldsteinShao2010})
%
\begin{eqnarray}
\bigl\llvert w f_x(w)\bigr\rrvert&\leq&1,\qquad \bigl\llvert
f_x (w)\bigr\rrvert\leq1, \label{property-1}
\\
\bigl\llvert w f_x(w) - (w+t) f_x(w+t)\bigr\rrvert
&\leq&\min\bigl\{ 1, \bigl( \llvert w\rrvert+ \sqrt{2\pi}/4 \bigr)
\llvert t\rrvert
\bigr\} \label{property-2}
\end{eqnarray}
and $\llvert f_{x,y}(w)\rrvert \leq\llvert x-y\rrvert $, yields
%
\begin{eqnarray}\label{h31}
H_{3 1} & =& \e\Biggl[ Wf_{\De_{2,\delta},\De_{1,\delta}}(W) \sum
_{i=1}^n\bigl\{ \e\xi_i^2
I\bigl( \llvert\xi_i \rrvert> 1 \bigr) - \xi_i^2
I\bigl( \llvert\xi_i \rrvert>1\bigr) \bigr\} \Biggr]
\nonumber
\\
&&{} + \e\Biggl\{ W f_{\De_{2,\delta},\De_{1,\delta}}(W) \sum_{i=1}^n
\bigl( \e\barxi_i^{ 2} -\barxi_i^{ 2}
\bigr) \Biggr\}
\nonumber
\\
& \leq&2 \beta_2 + 2 \e\Biggl\{ I(\De_2 -
\De_1 >1 ) \Biggl\llvert\sum_{i=1}^n
\bigl( \e\bar{\xi}_i^{ 2} - \bar{\xi}_i^{ 2}
\bigr) \Biggr\rrvert\Biggr\}
\nonumber
\\
&&{} + \e\Biggl\{ Wf_{\De_{2,\delta},\De_{1,\delta}}(W) I( \De_2 -
\De_1 \leq1 ) \sum_{i=1}^n
\bigl( \e\barxi_i^{ 2} -\barxi_i^{ 2}
\bigr) \Biggr\}
\nonumber
\\
& \leq&2 \beta_2 + \e(\De_2 - \De_1) +
\beta_3
\nonumber\\[-8pt]\\[-8pt]\nonumber
&&{} + \e\Biggl\{ \llvert W\rrvert( 2\delta+ \De_2 -
\De_1 ) I( \De_2 - \De_1 \leq1 ) \sum
_{i=1}^n\bigl( \e\barxi_i^{ 2}
-\barxi_i^{ 2} \bigr) \Biggr\}
\nonumber
\\
& \leq&2 \beta_2 + \e(\De_2 - \De_1) +
\beta_3 + 0.5 \e\bigl\{ ( 2 \delta+\De_2 -
\De_1 )^2 I( \De_2 - \De_1
\leq1 ) \bigr\}
\nonumber
\\
&&{} + 0.5 \e\Biggl[ W^2 \Biggl\{ \sum
_{i=1}^n\bigl( \e\barxi_i^{
2}
-\barxi_i^{ 2} \bigr) \Biggr\}^2 \Biggr]
\nonumber
\\
& \leq&2 \beta_2 + \e(\De_2 - \De_1) +
\beta_3 + 2 \delta^2 + 0.75 \e(\De_2 -
\De_1 )+ 2 \beta_3
\nonumber
\\
& \leq&2.125 \beta_2 + 3.125 \beta_3 + 1.75 \e(
\De_2 - \De_1 ) , \nonumber
\end{eqnarray}
where we used the facts that $\delta\leq1/8$,
\[
\e\Biggl\{ \sum_{i=1}^n \bigl(
\barxi_i^{ 2} - \e\barxi_i^{ 2}
\bigr) \Biggr\} ^2 \leq\beta_3 \quad\mbox{and}\quad \e
\Biggl\{ W \sum_{i=1}^n \bigl( \e
\barxi_i^{ 2} -\barxi_i^{ 2}
\bigr) \Biggr\}^2 \leq4 \beta_3. %
\]
To see this, set $U=\sum_{i=1}^n\eta_i$ with $\eta_i=\barxi_i^{ 2}
-\e\barxi_i^{ 2}$, then by standard calculations,
\begin{eqnarray*}
\e U^2 & =& \sum_{i=1}^n\e
\eta_i^2 \leq\sum_{i=1}^n
\e\barxi_i^{
4} \leq\sum_{i=1}^n
\e\llvert\barxi_i\rrvert^{ 3} = \beta_3
\end{eqnarray*}
and
\begin{eqnarray*}
\e\bigl( W^2 U^2 \bigr) & =& \sum
_{i,j,k,\ell} \e( \xi_i \xi_j
\eta_k \eta_{\ell} ) = \sum_{i=1}^n
\e\bigl( \xi_i^2 \eta_i^2
\bigr) + \sum_{i\neq j } \e\xi_i^2
\e\eta_j^2 + 2 \sum_{i \neq j}
\e\xi_i \eta_i \e\xi_j
\eta_j \leq4 \beta_3.
\end{eqnarray*}

As for $H_{3 2}$, by (\ref{property-2})
%
%
\begin{eqnarray} \label{H32}
H_{3 2} & \leq&\sum_{i=1}^n\e
\int_{\mathds{R}} 2 \min\bigl\{ 1, \bigl( \llvert W\rrvert+ \sqrt{2
\pi}/4 \bigr) \llvert t\rrvert\bigr\} \hatk_i(t) \,dt
\nonumber
\\
& \leq&2 \sum_{i=1}^n\e\int
_{\llvert t\rrvert >1} \hatk_i(t) \,dt + 2 \sum
_{i=1}^n\e\int_{\llvert t\rrvert \leq1} \bigl(
\llvert W\rrvert+\sqrt{2\pi}/4 \bigr) \llvert t\rrvert\hatk_i(t)
\,dt
\nonumber
\\
& \leq&2 \beta_2 + \e\Biggl\{ \bigl( \llvert W\rrvert+\sqrt{2\pi}/4
\bigr) \sum_{i=1}^n\llvert
\xi_i \rrvert\min\bigl( 1, \xi_i^2\bigr)
\Biggr\}
\nonumber\\[-8pt]\\[-8pt]\nonumber
& \leq&2 \beta_2 + \e\Biggl[ \bigl( \llvert W\rrvert+\sqrt{2\pi}/4
\bigr) \Biggl\{ \sum_{i=1}^n\llvert
\xi_i \rrvert I\bigl( \llvert\xi_i \rrvert>1\bigr) +
\sum_{i=1}^n\llvert\bar{
\xi}_i \rrvert^3 \Biggr\} \Biggr]
\nonumber
\\
& \leq&2 \beta_2 + (2+\sqrt{2\pi}/4) (\beta_2 +
\beta_3)
\nonumber
\\
& \leq&4.7 \beta_2 + 2.7 \beta_3,\nonumber
\end{eqnarray}
where we used the inequalities
\begin{eqnarray*}
\e\bigl\{ \llvert W\rrvert\cdot\llvert\xi_i \rrvert I\bigl(
\llvert\xi_i \rrvert>1\bigr) \bigr\} \leq\e\bigl\llvert
W^{(i)}\bigr\rrvert\cdot\e\llvert\xi_i \rrvert I\bigl(
\llvert\xi_i \rrvert>1\bigr) + \e\xi_i^2I
\bigl(\llvert\xi_i \rrvert>1\bigr) \leq2 \e\xi
_i^2I\bigl(\llvert\xi_i \rrvert>1\bigr)
\end{eqnarray*}
and $\e( \llvert W\rrvert \cdot\llvert \bar{\xi}_i\rrvert ^3) \leq\e
\llvert W^{(i)}\rrvert \cdot\e\llvert \bar
{\xi}_i\rrvert ^3 + \e\bar{\xi}_i^{ 4} \leq2 \e\llvert \bar{\xi
}_i\rrvert ^3$.
Combining (\ref{h3}), (\ref{h31}) and (\ref{H32}) yields~(\ref
{new-est}).
\end{pf*}
\end{pf*}

\section{Proof of Theorem \texorpdfstring{\protect\ref{t21}}{2.1}}
\label{proof1sec}

\subsection{Main idea of the proof}

Observe that $V_n$ is close to $1$ and $1+D_{2n}>0$. Remember that we
are interested in a particular type of nonlinear process that can be
written as a linear process plus a negligible remainder. Intuitively,
the leading term of the normalizing factor should be a quadratic
process, say $V_n^2$. The key idea of the proof is to first transform
$V_n ( 1+D_{2n})^{1/2}$ to
$(V_n^2 +1 )/2 + D_{2n}$ plus a small term and then apply the method of
conjugated distributions and the randomized concentration inequality
(\ref{con-ineq-1}).
It follows from the elementary inequalities
\[
1+s/2-s^2/2 \leq(1+s)^{1/2} \leq1+s/2,\qquad s\geq-1
\nonumber
\]
that $(1+D_{2n})^{1/2} \geq1+ \min(D_{2n}, 0)$, which leads to
%
\begin{eqnarray} \label{t21-1}
V_n(1+D_{2n})^{1/2} & \geq& V_n +
V_n \min(D_{2n}, 0)
\nonumber
\\
&\geq& 1+\bigl(V_n^2-1\bigr)/2 -\bigl(V_n^2-1
\bigr)^2 /2 + V_n \min(D_{2n}, 0)
\nonumber\\[-8pt]\\[-8pt]\nonumber
& \geq& V_n^2/2 + 1/2 -\bigl(V_n^2-1
\bigr)^2/2 + \bigl\{ 1+ \bigl(V_n^2-1\bigr)/2
\bigr\} \min(D_{2n}, 0)
\nonumber
\\
& \geq& V_n^2/2 + 1/2 -\bigl(V_n^2-1
\bigr)^2 + \min(D_{2n}, 0).\nonumber
\end{eqnarray}
Using the inequality $2 a b \leq a^2 + b^2$ yields the reverse inequality
\begin{eqnarray*}
V_n(1+D_{2n})^{1/2} \leq(1+D_{2n})/2
+ V_n^2/2 = V_n^2/2 +1/2 +
D_{2n}/2. 
\end{eqnarray*}

Consequently, for any $x > 0$,
%
\begin{eqnarray}\label{up}
\{ T_n \geq x \} & \subseteq& \bigl\{ W_n +
D_{1n} \geq x \bigl( V_n^2/2 + 1/2 -
\bigl(V_n^2-1\bigr)^2 + D_{2n}
\wedge0 \bigr) \bigr\}
\nonumber\\[-8pt]\\[-8pt]\nonumber
& =& \bigl[ xW_n-x^2V_n^2/2 \geq
x^2/2- x \bigl\{ x\bigl(V_n^2-1
\bigr)^2+D_{1n}+ x D_{2n}\wedge0 \bigr\} \bigr]
\end{eqnarray}
and
\begin{equation}
\{ T_n \geq x \} \supseteq\bigl\{ xW_n-x^2V_n^2/2
\geq x^2/2 + x ( x D_{2n} /2-D_{1n} ) \bigr\} .
\label{low}
\end{equation}

\begin{pf*}{Proof of (\ref{cmd-ubd})}
By (\ref{up}), we have for $x
\geq1$,
%
\begin{eqnarray}\label{p21}
&& \P( T_n \geq x )
\nonumber
\\
&&\quad  \leq\P\bigl\{ W_n \geq x V_n (1+ D_{2n}
\wedge0) - D_{1n}, \llvert D_{1n} \rrvert\leq
V_n/4x, \llvert D_{2n} \rrvert\leq1/4x^2
\bigr\}
\nonumber
\\
&&\qquad{} + \P\bigl( \llvert D_{1n} \rrvert/V_n> 1/4x
\bigr) + \P\bigl( \llvert D_{2n} \rrvert>1/ 4x^2 \bigr)
\\
&&\quad \leq\P\bigl( x W_n - x^2 V_n^2/2
\geq x^2/2- x \De_{1n} \bigr) + \P\bigl\{ W_n
\geq(x-1/ 2x )V_n, \bigl\llvert V_n^2-1\bigr
\rrvert>1/2x \bigr\}
\nonumber
\\
&&\qquad{}+ \P\bigl( \llvert D_{1n} \rrvert/V_n> 1/4x
\bigr) + \P\bigl( \llvert D_{2n} \rrvert>1/ 4x^2 \bigr) ,\nonumber
\end{eqnarray}
where
\begin{equation}
\De_{1n}= \min\bigl\{x\bigl(V_n^2-1
\bigr)^2+\llvert D_{1n} \rrvert+ x D_{2n}
\wedge0 , 1/x \bigr\}. \label{Den}
\end{equation}
Consequently, (\ref{cmd-ubd}) follows from the next two propositions. We
postpone the proofs to Section~\ref{pfprop}.
\end{pf*}

%
\begin{proposition} \label{ci}
There exist positive absolute constants $C_1, C_2$ such that
\begin{eqnarray}
\P\bigl(xW_n-x^2V_n^2/2 \geq
x^2/2- x \De_{1n} \bigr) \leq\bigl\{ 1-\Phi(x) \bigr\}
\exp( C_1 L_{n,x} ) (1+C_2 R_{n,x} )
\label{con-u}
\end{eqnarray}
holds for $x\geq1$ satisfying (\ref{c1}) and (\ref{rc}).
\end{proposition}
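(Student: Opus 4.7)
The plan is to combine the method of conjugate distributions with the randomized concentration inequality of Theorem~\ref{mod-con-ineq}. Write $U := xW_n - x^2V_n^2/2$ and introduce the tilted measure $\mathbb{Q}$ via $d\mathbb{Q}/d\mathbb{P} = I_{n,x}^{-1}e^{U}$. Because this Radon--Nikodym derivative factorizes, the $\xi_i$'s remain $\mathbb{Q}$-independent with marginal densities proportional to $e^{\xi_{i,x} - \xi_{i,x}^2/2}$. A Taylor expansion of $e^{s - s^2/2}$, together with $\e\xi_i = 0$ and condition \eqref{c1}, yields $I_{n,x} = \prod_i m_i(x) \le \exp(CL_{n,x})$, which will account for the exponential factor in the target bound.

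Next, I set $Y := (U - x^2/2)/x$, so the event of interest becomes $\{Y \ge -\Delta_{1n}\}$. Using the identity $e^{-U} = e^{-x^2/2}\int_Y^\infty xe^{-xt}\,dt$ and Fubini, the change-of-measure identity rewrites the target probability as
$$
\P\bigl(U \ge x^2/2 - x\Delta_{1n}\bigr) = I_{n,x}\,e^{-x^2/2}\!\int_{-\infty}^\infty xe^{-xt}\,\mathbb{Q}(-\Delta_{1n}\le Y\le t)\,dt.
$$
Because $\Delta_{1n} \le 1/x$ by the truncation in \eqref{Den}, the integrand is supported on $t \ge -1/x$, where $\int_{-\Delta_{1n}}^\infty xe^{-xt}\,dt \le e$. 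This layer-cake representation is the crucial device: combined with Mills' ratio at the end, it will produce the sharp $(1-\Phi(x))$ factor rather than a loose $e^{-x^2/2}$.

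To control $\mathbb{Q}(-\Delta_{1n}\le Y\le t)$ I would recentre under $\mathbb{Q}$: writing
$$
Y = \sum_{i=1}^n \bigl(\xi_i - \tfrac{x}{2}\xi_i^2\bigr) - \tfrac{x}{2} = \sigma^*\widetilde W + \e_\mathbb{Q} Y,
$$
where $\widetilde W$ has zero mean and unit $\mathbb{Q}$-variance, and conditions \eqref{c1}--\eqref{rc} ensure $\sigma^* = 1 + O(L_{n,x}/x^2)$ and $\e_\mathbb{Q} Y$ is negligible. Applying Theorem~\ref{mod-con-ineq} to $\widetilde W$ with deterministic upper threshold derived from $t$ and random lower threshold derived from $-\Delta_{1n}$, paired with the leave-one-out surrogate $\Delta_{1n}^{(i)}$ built from $V_n^{(i)}, D_{1n}^{(i)}, D_{2n}^{(i)}$, gives
$$
\mathbb{Q}(-\Delta_{1n}\le Y\le t) \le C\tfrac{L_{n,x}}{x^2} + C\,\e_\mathbb{Q}|t + \Delta_{1n}| + \tfrac{C}{x}\sum_{i=1}^n \e_\mathbb{Q}\bigl[(|\xi_{i,x}|\!\wedge\! 1)\,|\Delta_{1n} - \Delta_{1n}^{(i)}|\bigr].
$$
The difference $|\Delta_{1n} - \Delta_{1n}^{(i)}|$ is bounded by $|D_{1n} - D_{1n}^{(i)}| + x|D_{2n} - D_{2n}^{(i)}|$ plus residues absorbable into $L_{n,x}$, and converting each $\e_\mathbb{Q}$-expectation back to an $\e$-expectation against the tilting density $I_{n,x}^{-1}e^{\sum(\xi_{j,x} - \xi_{j,x}^2/2)}$ reproduces precisely the quantities defining $R_{n,x}$ in \eqref{rn1}.

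Plugging back, the three pieces integrate against $xe^{-xt}\,dt$ to produce contributions of order $L_{n,x}/x^2$, $1/x + \e_\mathbb{Q}\Delta_{1n}$, and $R_{n,x}/x$ respectively; combining with Mills' ratio $1-\Phi(x) \asymp e^{-x^2/2}/x$ and with $I_{n,x} \le \exp(CL_{n,x})$ yields the claimed bound $\{1-\Phi(x)\}\exp(C_1L_{n,x})(1 + C_2R_{n,x})$. The main obstacle will be the bookkeeping of the tilted moments: one must verify that $\sigma^{*2}$ and the tilted analogues of $\beta_2 + \beta_3$ are controlled by $L_{n,x}/x^2$ via \eqref{c1}--\eqref{rc}; that the quadratic remainder $-\frac{x}{2}(V_n^2 - 1)$ in $Y$ does not spoil the sum-of-independents hypothesis of Theorem~\ref{mod-con-ineq}; and that the leave-one-out surrogate $\Delta_{1n}^{(i)}$ can be chosen to be both independent of $\xi_i$ and so close to $\Delta_{1n}$ that $|\Delta_{1n} - \Delta_{1n}^{(i)}|$ is genuinely driven by $|D_{1n}-D_{1n}^{(i)}|$ and $|D_{2n}-D_{2n}^{(i)}|$ alone.
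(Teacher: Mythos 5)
Your setup matches the paper's proof in most of its machinery: the exponential tilting with $I_{n,x}=\exp\{O(1)L_{n,x}\}$ (Lemma~\ref{In0}), the use of the truncation $x\De_{1n}\leq 1$ built into \eqref{Den}, the application of Theorem~\ref{mod-con-ineq} to the standardized tilted sum with leave-one-out surrogates, and the conversion of tilted expectations back to $\P$-expectations against the density $I_{n,x}^{-1}e^{\sum_{j}(\xi_{j,x}-\xi_{j,x}^2/2)}$, which is exactly how $R_{n,x}$ arises in the paper (see \eqref{t-1} and \eqref{t-2}); your layer-cake identity is also correct as stated. But there is a genuine gap at the heart of the argument: you bound the band probability $\mathbb{Q}(-\De_{1n}\leq Y\leq t)$ for \emph{every} $t$ by the randomized concentration inequality and then integrate against $xe^{-xt}\,dt$. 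Inequality \eqref{con-ineq-1} carries the absolute constant $5$ in front of $\e|\De_2-\De_1|$, whereas the true band probability under $\mathbb{Q}$ is $\approx \phi(0)\,(t+\De_{1n})$ with $\phi(0)=1/\s{2\pi}$; so after integration your leading contribution is $\approx 5/x$ rather than $\approx 1/(\s{2\pi}\,x)$, and multiplying by $I_{n,x}e^{-x^2/2}$ yields a bound of the form $C\{1-\Phi(x)\}\exp(C_1L_{n,x})(1+C_2R_{n,x})$ with an absolute constant $C\approx 5\s{2\pi}>1$ in front of $1-\Phi(x)$. That is not Proposition~\ref{ci}: the entire point of the Cram\'er-type statement is that the leading constant is exactly $1$, and nothing in your outline --- which never invokes a central limit theorem or Berry--Esseen comparison for the tilted sum --- can recover it.

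The paper's proof avoids this by the split $H_n\leq H_{1n}+H_{2n}$ in \eqref{step-1}: the main term $H_{1n}=\e\{\exp(-\sigma_n\wh{W}-m_n)I(\wh{W}\geq\varepsilon_n)\}$ is evaluated sharply via integration by parts against the distribution $G_n$ of $\wh{W}$, the Berry--Esseen bound $|J_{1n}|\lesssim v_n\sigma_n^{-3}$, and the closed-form Gaussian integral $J_{2n}=e^{-\varepsilon_n^2/2}\Psi(r_n)/\s{2\pi}$ with Mills-ratio estimates (see \eqref{decom-I2} and \eqref{I1}), giving $H_{1n}\leq\{1-\Phi(x)\}(1+C x^{-2}L_{n,x})$, i.e.\ leading constant $1$. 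The concentration inequality is reserved for the narrow exceptional band $\varepsilon_n-x\wh{\De}_{1n}/\sigma_n\leq\wh{W}<\varepsilon_n$, on which $e^{-\sigma_n\wh{W}-m_n}\leq e^{1-x^2/2}$, so its loose absolute constants enter only the correction terms $x^{-2}L_{n,x}$ and $R_{n,x}$. To repair your argument, decompose your band probability into a main piece handled by Berry--Esseen and a perturbation piece handled by Theorem~\ref{mod-con-ineq}, or simply adopt the paper's split. You would then also need the analogue of Lemma~\ref{De1}: the $(V_n^2-1)^2$ part of $\De_{1n}$ and of $\De_{1n}-\De_{1n}^{(i)}$ must be shown to contribute only $x^{-2}L_{n,x}(1+L_{n,x})$, which is absorbed into $\exp(C_1L_{n,x})$ using \eqref{rc}; this is a nontrivial tilted-moment computation that your sketch dismisses as ``residues absorbable into $L_{n,x}$.'' (Your worry about the quadratic term spoiling independence is unfounded, since $Y$ is a sum of the independent terms $\xi_i-\tfrac{x}{2}\xi_i^2$; the real difficulty is the sharp main-term constant.)
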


%
\begin{proposition} \label{tr-ineq}
There exist positive absolute constants $C_3, C_4$ such that
\begin{equation}
\P\bigl( W_n/V_n \geq x-1/2x , \bigl\llvert
V_n^2-1\bigr\rrvert>1/ 2x \bigr) \leq C _3
\bigl\{ 1-\Phi(x) \bigr\} \exp( C_4 L_{n,x} )
L_{n,x} \label{part-1}
\end{equation}
holds for all $x\geq1$.
\end{proposition}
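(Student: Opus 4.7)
My plan follows the Cram\'er-tilt methodology underlying Proposition~\ref{ci}. The extra factor $L_{n,x}$ in \eqref{part-1} compared to the leading-order moderate-deviation bound should arise from two compensating ingredients: the quadratic indicator bound $I\{|V_n^2-1|>1/(2x)\}\le 4x^2(V_n^2-1)^2$, together with a tilted second-moment estimate $\e_Q(V_n^2-1)^2=O(L_{n,x}/x^2)$, where $Q$ is the Cram\'er conjugate measure with $dQ/d\P=\exp(xW_n-x^2V_n^2/2)/I_{n,x}$. The product of these two bounds supplies the desired factor $L_{n,x}$, while the exponential decay $(1-\Phi(x))\exp(CL_{n,x})$ is furnished by $I_{n,x}e^{-x^2/2}$ via the standard Cram\'er estimate (the same one already used in Proposition~\ref{ci}).

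First I would derive a pointwise lower bound on $\{W_n\ge yV_n\}$ with $y=x-1/(2x)$. From $V_n\le (V_n^2+1)/2$ (which is just $(V_n-1)^2\ge 0$) and $(V_n-1)^2\le(V_n^2-1)^2$ (since $V_n\ge 0$), a short calculation yields
\begin{equation*}
xW_n-\tfrac12 x^2V_n^2 \ \ge\ \tfrac12 x^2 - 1 - x^2(V_n^2-1)^2\qquad \text{on }\{W_n\ge yV_n\},
\end{equation*}
which translates into $I\{W_n\ge yV_n\}\le e^{1-x^2/2}\exp\{xW_n - x^2V_n^2/2 + x^2(V_n^2-1)^2\}$. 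Combining this with the indicator bound on $|V_n^2-1|$, one gets
\begin{equation*}
\P(W_n/V_n\ge y,\, |V_n^2-1|>1/(2x)) \ \le\ 4e\,x^2 e^{-x^2/2}\,\e\Big[(V_n^2-1)^2 \exp\!\big(xW_n-x^2V_n^2/2+x^2(V_n^2-1)^2\big)\Big].
\end{equation*}

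Next I would handle the awkward quadratic exponential $e^{x^2(V_n^2-1)^2}$ by splitting on $A=\{|V_n^2-1|\le\eta\}$ for a sufficiently small constant $\eta$. On $A$ the factor $x^2(V_n^2-1)^2\le x^2\eta^2$ can be absorbed into $\exp(CL_{n,x})$ after invoking the MD constraint \eqref{rc}; on $A^c$ a crude union bound suffices, with $\P(A^c)$ controlled by Chebyshev via $\e(V_n^2-1)^2\le\sum_i\e\xi_i^4\lesssim L_{n,x}/x^2$, combined with an exponential moment bound on $xW_n-x^2V_n^2/2$. On the main region $A$, the method of conjugated distributions gives
\begin{equation*}
\e\big[(V_n^2-1)^2 e^{xW_n-x^2V_n^2/2} I_A\big] \ =\ I_{n,x}\,\e_Q\big[(V_n^2-1)^2 I_A\big];
\end{equation*}
under $Q$ the $\xi_i$'s remain independent with tilted laws, and an $i$-by-$i$ computation (in the same spirit as the one behind Proposition~\ref{ci}) gives $\e_Q(V_n^2-1)^2\le CL_{n,x}/x^2$. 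The $4x^2$ prefactor then produces the claimed $L_{n,x}$, and $I_{n,x}e^{-x^2/2}\le C(1-\Phi(x))\exp(CL_{n,x})$ completes the bound.

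The principal difficulty lies in the tilted variance bound $\e_Q(V_n^2-1)^2=O(L_{n,x}/x^2)$. Under $Q$ the first and second moments of each $\xi_i^2$ are shifted by terms involving $\e[x\xi_i^3 e^{\xi_{i,x}-\xi_{i,x}^2/2}]$ and similar, which are routine to control when $|\xi_i|\le 1/x$ but need an initial truncation decomposition for larger $\xi_i$'s, the truncation error being absorbed into the heavy-tail component $x^2\sum_i\e\xi_i^2 I(|\xi_i|>1/x)$ of $L_{n,x}$. Balancing the choice of $\eta$ so that both the absorption of $e^{x^2(V_n^2-1)^2}$ into $\exp(CL_{n,x})$ on $A$ and the complementary estimate on $A^c$ produce matching orders is the most delicate bookkeeping in the argument.
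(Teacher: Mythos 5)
Your opening ingredients do coincide with the paper's: the quadratic indicator bound $I\{|V_n^2-1|>1/(2x)\}\le 4x^2(V_n^2-1)^2$ and a tilted second-moment estimate for $(V_n^2-1)^2$ (the paper's Lemma~\ref{De1}) are exactly how the paper extracts the factor $L_{n,x}$. But your master inequality fails as a whole. The pointwise bound $I\{W_n\ge yV_n\}\le e^{1-x^2/2}\exp\{xW_n-x^2V_n^2/2+x^2(V_n^2-1)^2\}$ is algebraically correct, yet its expectation is typically $+\infty$: for large $V_n$ the added term $x^2(V_n^2-1)^2$ grows like $x^2V_n^4$ and swamps $-x^2V_n^2/2$, so already for Gaussian $\xi_i$ one has $\e\exp\{x^2(V_n^2-1)^2\}=\infty$. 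The $A/A^c$ split cannot rescue this. On $A$ with a constant $\eta$, the factor $e^{\eta^2x^2}$ is not absorbable into $\exp(CL_{n,x})$: the proposition is asserted for \emph{all} $x\ge1$ without condition \eqref{rc}, and \eqref{rc} would in any case point the wrong way --- it gives $L_{n,x}\lesssim x^2$, whereas absorption requires $x^2\lesssim L_{n,x}$, and the nontrivial regime is precisely $L_{n,x}\ll x^2$. Taking instead $\eta\asymp 1/x$ makes $x^2\eta^2=O(1)$, but then $A^c$ essentially contains the event you are estimating (which already has $|V_n^2-1|>1/(2x)$), so the split is circular. On $A^c$ your named tools also fail individually: the Chebyshev input $\e(V_n^2-1)^2\le\sum_i\e\xi_i^4$ requires fourth moments that are not assumed under \eqref{2.00} (the truncated structure of $L_{n,x}$ exists precisely because such moments may be infinite), and combining $\P(A^c)$ with an exponential moment of $xW_n-x^2V_n^2/2$ via Cauchy--Schwarz halves the Gaussian exponent $e^{-x^2/2}$, destroying the bound. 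A smaller but genuine inaccuracy: your tilted estimate $\e_Q(V_n^2-1)^2=O(L_{n,x}/x^2)$ is too weak by a factor of $x$ --- since $1-\Phi(x)\asymp e^{-x^2/2}/x$, the $4x^2$ prefactor would leave you at $x\{1-\Phi(x)\}e^{CL_{n,x}}L_{n,x}$; you need the stronger bound $O(x^{-4}L_{n,x}(1+L_{n,x}))$, which is what Lemma~\ref{De1} actually provides.

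The missing idea is the paper's decomposition according to the size of $V_n$ itself rather than of $|V_n^2-1|$ alone: it splits the event into $1+1/(2x)<V_n^2\le 16$, $V_n^2<1-1/(2x)$, and $V_n>4$, and on the first two regions applies Markov's inequality with region-adapted tilts $e^{t_\nu W_n-\lambda_\nu V_n^2}$ (with $\lambda_1\approx x^2/8$ and $\lambda_2=2x^2-1$, deliberately not the canonical $x^2/2$), so that the infimum of $t_\nu u-\lambda_\nu v^2$ stays finite on each bounded region and an extra factor $e^{-cx}$ is gained; the factor $L_{n,x}$ then comes from $\e\{(V_n^2-1)^2e^{t_\nu W_n-\lambda_\nu V_n^2}\}$, exactly in the spirit of your plan. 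The unbounded region $V_n>4$ --- precisely where your exponential linearization blows up --- requires a genuinely different argument: the paper truncates the $\xi_i$ at level $a_0/x$ and controls three pieces $J_{3n}$, $J_{4n}$, $J_{5n}$, the last via the observation that $\sum_i\xi_iI(x\xi_i>a_0)\ge (x-1/2x)V_n/2$ forces, by Cauchy's inequality, at least of order $x^2$ summands to exceed $a_0/x$, an event bounded by a binomial-type exponential estimate. Without some treatment of this large-$V_n$ regime, no variant of your single-inequality approach can close.
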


\begin{pf*}{Proof of (\ref{cmd-lbd})} By (\ref{low}),
\begin{equation}
\P( T_n \geq x) \geq\P\bigl( x W_n - x^2
V_n^2/2 \geq x^2/2 + x \De_{2n}
\bigr), \label{p21prime}
\end{equation}
where $\De_{2n}= xD_{2n}/2-D_{1n}$. Then (\ref{cmd-lbd}) follows
directly from the following proposition.

%
\begin{proposition} \label{ci-lbd}
There exist positive absolute constants $C_5, C_6$ such that
\begin{equation}
\P\bigl( x W_n - x^2 V_n^2/2
\geq x^2/2+ x \De_{2n} \bigr) \geq\bigl\{ 1-\Phi(x) \bigr\}
\exp( -C_5 L_{n,x} ) (1- C_6
R_{n,x} ) \label{con-l}
\end{equation}
for $ x\geq1$ satisfying (\ref{c1}) and (\ref{rc}).
\end{proposition}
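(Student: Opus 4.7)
I would follow the conjugated-distribution (exponential tilting) method combined with the randomized concentration inequality of Theorem~\ref{mod-con-ineq}, run in a lower-bound direction. The setup closely parallels the upper-bound argument behind Proposition~\ref{ci}, but the crux is to extract a \emph{lower} bound on a tilted slab probability whose boundaries are themselves random.

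Introduce the tilted probability measure $\mathbb{Q}$ by $d\mathbb{Q}/d\P = I_{n,x}^{-1}\exp(xW_n-x^2V_n^2/2)$; under $\mathbb{Q}$ the $\xi_i$'s remain independent with marginals proportional to $\exp(xt-x^2t^2/2)$ times the original ones. The change-of-measure identity gives
\begin{equation*}
\P\bigl(xW_n-x^2V_n^2/2\geq x^2/2+x\De_{2n}\bigr)= I_{n,x}\,\e_\mathbb{Q}\bigl[\exp(-(xW_n-x^2V_n^2/2))\,I\bigl(xW_n-x^2V_n^2/2\geq x^2/2+x\De_{2n}\bigr)\bigr].
\end{equation*}
Restricting the indicator to the narrow slab $\mathcal{A}:=\{x^2/2+x\De_{2n}\leq xW_n-x^2V_n^2/2\leq x^2/2+x\De_{2n}+c\}$ for a fixed absolute constant $c>0$ and bounding the integrand pointwise on $\mathcal{A}$ by $\exp(-x^2/2-x\De_{2n}-c)$ yields
\begin{equation*}
\P(\cdots)\geq I_{n,x}e^{-x^2/2-c}\,\e_\mathbb{Q}\bigl[e^{-x\De_{2n}}\,I(\mathcal{A})\bigr].
\end{equation*}
Using $e^{-u}\geq 1-u$ and $x|\De_{2n}|\leq x|D_{1n}|+x^2|D_{2n}|/2$, and converting $\e_\mathbb{Q}$ back to $I_{n,x}^{-1}\e[\cdot\,\exp(xW_n-x^2V_n^2/2)]$, the resulting correction term is exactly the first summand of $R_{n,x}$; the prefactor $I_{n,x}e^{-x^2/2}$ matches $\{1-\Phi(x)\}\exp(O(L_{n,x}))$ by the standard Taylor expansion of the cumulant generating function under \eq{c1}--\eq{rc}.

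The heart of the argument is thus a lower bound of the form $\mathbb{Q}(\mathcal{A})\geq c'-CR_{n,x}$. A direct tilted-moment computation shows that under $\mathbb{Q}$ the quantity $xW_n-x^2V_n^2/2$ has mean $\approx x^2/2$ and variance $\approx x^2$, so its standardized version is approximately $N(0,1)$ with Berry--Esseen error controlled by $L_{n,x}$. Translating $\mathcal{A}$ into a slab $\wt{\De}_1\leq\sum_i\eta_i\leq\wt{\De}_2$ with random boundaries $\wt{\De}_j$ built from $x\De_{2n}$ and the tilted means, I would apply Theorem~\ref{mod-con-ineq} to the independent centered tilted summands $\eta_i=x\xi_i-x^2\xi_i^2/2-\e_\mathbb{Q}[x\xi_i-x^2\xi_i^2/2]$, together with the leave-one-out boundaries $\wt{\De}_j^{(i)}$ built from $D_{1n}^{(i)},D_{2n}^{(i)}$. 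This produces exactly the remaining ingredients of $R_{n,x}$, namely the terms $I_{n,x}^{-1}\sum_i\e[\min(|\xi_{i,x}|,1)(|D_{1n}-D_{1n}^{(i)}|+x|D_{2n}-D_{2n}^{(i)}|)\prod_{j\neq i}e^{\xi_{j,x}-\xi_{j,x}^2/2}]$.

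The main obstacle is the lower bound on $\mathbb{Q}(\mathcal{A})$: Theorem~\ref{mod-con-ineq} naturally produces \emph{upper} bounds, so it must be combined with a tilted Berry--Esseen normal approximation to convert upper bounds on the two complementary tails $\mathbb{Q}\{\sum_i\eta_i>\wt{\De}_2\}$ and $\mathbb{Q}\{\sum_i\eta_i<\wt{\De}_1\}$ into a positive lower bound on the slab probability. Choosing the slab width $c$ is delicate --- small enough that the penalty $e^{-c}$ remains absorbed into an absolute constant, yet large enough that the Gaussian mass on the slab dominates the Berry--Esseen error. Once this balance is struck, the estimates combine to give \eq{con-l}.
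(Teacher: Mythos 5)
Your overall framework (exponential tilting plus the randomized concentration inequality) is exactly the paper's, but the core quantitative step of your plan fails, and the failure is structural rather than technical. By restricting to a single slab $\mathcal{A}$ of fixed width $c$ and bounding the weight $e^{-(xW_n-x^2V_n^2/2)}$ by its \emph{minimum} on $\mathcal{A}$, the best you can ever obtain is $\P(\cdots)\geq I_{n,x}e^{-x^2/2-c}\,\mathbb{Q}(\mathcal{A})(1-\cdots)$. Under $\mathbb{Q}$ the statistic $xW_n-x^2V_n^2/2$ has mean $m_n=x^2/2+O(L_{n,x})$ and standard deviation $\sigma_n\asymp x$ (see \eqref{mn}--\eqref{sigman}), so a width-$c$ slab near the mean carries mass $\mathbb{Q}(\mathcal{A})\asymp c/(x\sqrt{2\pi})$ --- \emph{not} a constant $c'$; your claimed bound $\mathbb{Q}(\mathcal{A})\geq c'-CR_{n,x}$ is false in order of magnitude for large $x$. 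Feeding $\mathbb{Q}(\mathcal{A})\asymp c/x$ back in and comparing with $1-\Phi(x)\asymp e^{-x^2/2}/(x\sqrt{2\pi})$ via \eqref{gaussian.tails}, your lower bound is at best $ce^{-c}\{1-\Phi(x)\}$ up to $e^{O(L_{n,x})}$ factors, and $\sup_{c>0}ce^{-c}=e^{-1}<1$. A bound of the form $c_0\{1-\Phi(x)\}$ with $c_0$ bounded away from $1$ cannot prove \eqref{con-l}, whose prefactor $\exp(-C_5L_{n,x})(1-C_6R_{n,x})$ must tend to $1$; this is precisely what makes \eqref{cmd-lbd} a Cram\'er-type (relative error) result rather than a large-deviation order estimate. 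Your remark that the penalty $e^{-c}$ can be ``absorbed into an absolute constant'' is where the plan goes wrong: no multiplicative constant loss is permissible in the leading term. (A secondary defect: measuring the correction $\e_\mathbb{Q}[x|\De_{2n}|]\lesssim R_{n,x}$ against a main term of size $c/x$ inflates the relative error to $O(xR_{n,x})$, again weaker than claimed.)

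The paper avoids both problems by \emph{not} truncating the integral. After tilting, it writes the target probability as $I_{n,x}(H_{1n}-H'_{2n})$ (see \eqref{c-lbd}), using the pointwise inequality $I(\wh W\geq \varepsilon_n+x\wh\De_{2n}/\sigma_n)\geq I(\wh W\geq\varepsilon_n)-I(\varepsilon_n\leq \wh W<\varepsilon_n+x\wh\De_{2n}/\sigma_n)$. The main term $H_{1n}=\int_{\varepsilon_n}^{\infty}e^{-\sigma_n t-m_n}\,dG_n(t)$ is evaluated \emph{exactly to first order} --- integration by parts, the Berry--Esseen theorem for the tilted sum, and the Mills ratio $\Psi$, as in the proof of \eqref{I1} --- yielding $H_{1n}\geq\{1-\Phi(x)\}(1-Cx^{-2}L_{n,x})$; this is where the constant $1$ comes from, and it is exactly the information your single-slab pointwise bound discards (recovering it would force you into a Riemann-sum over infinitely many slabs, i.e., re-deriving this integral evaluation). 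The correction $H'_{2n}\leq e^{-x^2/2}\,\P(\varepsilon_n\leq\wh W<\varepsilon_n+x\wh\De_{2n}/\sigma_n)$ involves only an \emph{upper} bound on a slab probability with random boundary, which is precisely what Theorem~\ref{mod-con-ineq} supplies, producing the $R_{n,x}$ terms as in the proof of Proposition~\ref{ci}. Thus the ``main obstacle'' you identify --- extracting a \emph{lower} bound on a random-boundary slab probability from an upper-bound-type concentration inequality --- is an artifact of your decomposition and never arises in the correct argument.
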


The proof of Theorem~\ref{t21} is then complete.
\end{pf*}

\subsection{Proof of Propositions \texorpdfstring{\protect\ref{ci}}{5.1}, 
\texorpdfstring{\protect\ref{tr-ineq}}{5.2} and \texorpdfstring{\protect\ref{ci-lbd}}{5.3}}
\label{pfprop}

For two sequences of real numbers $a_n$ and $b_n$, we write $a_n
\lesssim b_n$ if there is a universal constant $C$ such that $a_n \leq
C b_n$ holds for all $n$. Throughout this section, $C, C_1, C_2, \ldots
$ denote positive constants that are independent of $n$. We start with
some preliminary lemmas. The first two lemmas are Lemmas 5.1 and 5.2 in
\cite{JingShaoWang2003}. Let $X$ be a random variable such that $\e X
=0$ and $\e X^2 < \infty$, and set
\[
\delta_1=\e X^2 I\bigl( \llvert X\rrvert>1 \bigr) +\e
\llvert X\rrvert^3 I\bigl( \llvert X\rrvert\leq1 \bigr) .
\]

%
\begin{lemma} \label{l0}
For $0 \leq\lambda\leq4$ and
$0.25 \leq\theta\leq4$, we have
\begin{equation}
\e e^{\lambda X-\theta X^2} = 1+\bigl(\lambda^2/2-\theta\bigr)\e
X^2 +O(1) \delta_1, \label{f-1}
\end{equation}
where $O(1)$ is bounded by an absolute constant.
\end{lemma}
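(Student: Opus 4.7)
My plan is to split the expectation according to whether $|X|\le 1$ or $|X|>1$, apply a third-order Taylor expansion on the former and a crude uniform bound on the latter. The key structural observation I rely on is that the constraints $\lambda\in[0,4]$ and $\theta\in[1/4,4]$ force $\lambda X-\theta X^2\le \lambda^2/(4\theta)\le 16$ globally (by completing the square), so the integrand is uniformly bounded above everywhere; simultaneously, on $\{|X|\le 1\}$ the exponent itself satisfies $|\lambda X-\theta X^2|\le \lambda+\theta\le 8$, which controls the Taylor remainder constant by an absolute amount.

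\textbf{Small region.} On $\{|X|\le 1\}$, with $Y:=\lambda X-\theta X^2$, Taylor's theorem gives $e^{Y}=1+Y+Y^2/2+O(1)|Y|^3$, where the $O(1)$ is absolute on $|Y|\le 8$. Two easy reductions will clean this up: first $|Y|^3\lesssim |X|^3$ since $|Y|\le 8|X|$, and second $Y^2=\lambda^2 X^2+O(|X|^3)$ after absorbing the $X^3$ and $X^4$ cross terms into the remainder via $|X|^4\le |X|^3$. I will then take expectations and use $\e X=0$ to rewrite $\lambda\,\e X\,I(|X|\le 1)=-\lambda\,\e X\,I(|X|>1)$, which is bounded by $4\,\e|X|\,I(|X|>1)\le 4\delta_1$, to obtain
\begin{equation}
\e\,e^{\lambda X-\theta X^2}\,I(|X|\le 1)=\P(|X|\le 1)+(\lambda^2/2-\theta)\,\e X^2\,I(|X|\le 1)+O(1)\delta_1. \nonumber
\end{equation}

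\textbf{Large region and assembly.} Here I will use the uniform bound $e^{\lambda X-\theta X^2}\le e^{16}$ together with $\P(|X|>1)\le\e X^2\,I(|X|>1)\le\delta_1$ to conclude $\e\,e^{\lambda X-\theta X^2}\,I(|X|>1)=O(\delta_1)$. The leftover piece $(\lambda^2/2-\theta)\,\e X^2\,I(|X|>1)$ is likewise $O(\delta_1)$, as is $\P(|X|\le 1)-1=-\P(|X|>1)$. Adding the two regional contributions and reassembling $\e X^2\,I(|X|\le 1)+\e X^2\,I(|X|>1)=\e X^2$ then delivers the claimed expansion. I do not foresee any substantive obstacle; the only care required is bookkeeping, namely verifying that every discarded term is genuinely bounded by an absolute constant times $\delta_1$, which reduces to the three tail estimates $\P(|X|>1)$, $\e|X|\,I(|X|>1)$, and $\e X^2\,I(|X|>1)$, all dominated by $\delta_1$.
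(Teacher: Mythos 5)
Your proof is correct: the truncation at $|X|\le 1$, the third-order Taylor expansion with remainder controlled by the uniform bound $|\lambda X-\theta X^2|\le 8$ on the small region, the use of $\e X=0$ to trade $\lambda\,\e X I(|X|\le 1)$ for a tail term dominated by $\e X^2 I(|X|>1)\le\delta_1$, and the global bound $e^{\lambda X-\theta X^2}\le e^{\lambda^2/(4\theta)}\le e^{16}$ on the large region all check out, with every discarded term genuinely $O(1)\delta_1$. The paper itself gives no proof, importing this as Lemma 5.1 of Jing, Shao and Wang (2003), and your argument is essentially the standard one used there, so there is nothing further to compare.
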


%
\begin{lemma} \label{l2}
Let $Y=X-X^2/2$. Then for $0.25 \leq\lambda\leq4$, we have
\begin{eqnarray}
\e e^{\lambda Y} & =& 1+ \bigl(\lambda^2/2 -\lambda/2 \bigr) \e
X^2 +O(1) \delta_1,
\nonumber
\\
\e\bigl( Y e^{\lambda Y} \bigr) &=& ( \lambda-1/2 ) \e X^2 +O(1)
\delta_1,
\nonumber
\\
\e\bigl( Y^2 e^{\lambda Y} \bigr) &=& \e X^2+O(1)
\delta_1,
\nonumber
\\
\e\bigl( \llvert Y\rrvert^3e^{\lambda Y} \bigr) &=& O(1)
\delta_1 \quad\mbox{and} \quad\bigl\{ \e\bigl( Y
e^{\lambda Y } \bigr) \bigr\}^2 = O(1) \delta_1,
\nonumber
\end{eqnarray}
where the $O(1)$'s are bounded by an absolute constant. In particular,
when $\lambda= 1 $, we have
\begin{equation}
e^{-5.5\delta_1} \leq\e e^{Y} \leq e^{2.65\delta_1}. \label{mgf-est}
\end{equation}
\end{lemma}

%
\begin{lemma} \label{l3}
Let $Y = X - X^2/2$, $Z= X^2 - \e X^2$ and write
\[
\delta_{1 1} = \e X^2 I\bigl(\llvert X\rrvert> 1\bigr) ,\qquad\delta_{1 2} = \e\llvert X\rrvert^3 I\bigl(\llvert X
\rrvert\leq1\bigr) . %
\]
Then
%
\begin{eqnarray}
\bigl\llvert\e\bigl( Z e^{ Y}\bigr) \bigr\rrvert& \leq&4.2
\delta_{1 1} +1.5 \delta_{1 2}, \label
{l3-a}
\\
\e\bigl(Z^2 e^{Y}\bigr) &\leq&4 \delta_{1 1} +2
\delta_{1 2} + 2 \delta_{1
1}^2 , \label{l3-b}
\\
\e\bigl(\llvert Y Z \rrvert e^{Y}\bigr) & \leq&2 \delta_{1 1}
+ \delta_{1 2}, \label
{l3-c}
\\
\e\bigl(\llvert Y\rrvert Z^2 e^{Y}\bigr) &\leq&3.1
\delta_{1 1} + \delta_{1 2} + \delta_{1 1}^2.
\label{l3-d}
\end{eqnarray}
\end{lemma}

\begin{pf}
See the \hyperref[appe]{Appendix}.
\end{pf}

The next lemma provides an estimate of $I_{n,x} $ given in (\ref{deix}).

%
\begin{lemma} \label{In0}
Let $\xi_i$ be independent random variables satisfying (\ref{200}) and
let $L_{n,x}$ be defined as
in (\ref{deix}). Then there exists an absolute positive constant $C$
such that
\begin{eqnarray}
I_{n,x}= \exp\bigl\{ O(1) L_{n,x} \bigr\} \label{In-1}
\end{eqnarray}
for all $x \geq1$, where $\llvert O(1) \rrvert \leq C$.
\end{lemma}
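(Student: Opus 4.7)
The plan is to write $\log I_{n,x}$ as a sum over the independent coordinates and bound each summand directly using the single-variable moment-generating-function estimate \eqref{mgf-est} from Lemma \ref{l2}. By the definition in \eqref{deix} and the independence of $\xi_1,\dots,\xi_n$,
$$
I_{n,x} = \prod_{i=1}^n \e \exp\bigl(\xi_{i,x} - \xi_{i,x}^2/2\bigr),
$$
so a per-factor bound combined with the additive structure $L_{n,x} = \sum_{i=1}^n \delta_{i,x}$ will immediately yield the result.

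For each $i$, I would apply \eqref{mgf-est} with $X = \xi_{i,x} = x\xi_i$. The hypothesis $\e X = 0$ needed for Lemma \ref{l2} is satisfied since \eqref{2.00} gives $\e \xi_i = 0$, and the quantity $\delta_1$ of Lemma \ref{l2} for this choice of $X$ is precisely $\delta_{i,x}$ as defined in \eqref{deix} (the truncation at $|X| \leq 1$ corresponds exactly to $|\xi_{i,x}| \leq 1$). Plugging in, \eqref{mgf-est} yields
$$
e^{-5.5\,\delta_{i,x}} \leq \e \exp\bigl(\xi_{i,x} - \xi_{i,x}^2/2\bigr) \leq e^{2.65\,\delta_{i,x}}
$$
for every $i = 1,\dots,n$. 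Taking the product of these two-sided bounds and using $L_{n,x} = \sum_{i=1}^n \delta_{i,x}$ gives
$$
\exp\bigl(-5.5\,L_{n,x}\bigr) \leq I_{n,x} \leq \exp\bigl(2.65\,L_{n,x}\bigr),
$$
which is exactly the claim $I_{n,x} = \exp\{O(1) L_{n,x}\}$ with $|O(1)| \leq 5.5$.

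There is essentially no obstacle here: the lemma is a direct corollary of the scalar bound \eqref{mgf-est} established in Lemma \ref{l2}, together with independence. The side conditions \eqref{c1} and \eqref{rc} appearing in Theorem \ref{t2.1} play no role in this particular statement, as \eqref{mgf-est} is asserted in the full generality needed. The only mild point worth verifying is that the exponential form of the bound in \eqref{mgf-est} remains valid without extra smallness restrictions on $\delta_{i,x}$; this is already handled inside the proof of Lemma \ref{l2}, so the present argument is just a one-line product inequality built on top of it.
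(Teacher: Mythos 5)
Your proof is correct and coincides with the paper's own one-line argument: the paper likewise applies the two-sided bound \eqref{mgf-est} (stated in Lemma \ref{l2}, though cited there as Lemma \ref{l0}) to $X = x\xi_i$ for each $i$, multiplies over $i$ using independence and $L_{n,x}=\sum_{i=1}^n \de_{i,x}$, and obtains the same constant $|O(1)|\leq 5.5$. Your additional remarks, that \eqref{c1} and \eqref{rc} are not needed and that \eqref{mgf-est} requires no smallness of $\de_{i,x}$, are also accurate.
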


\begin{pf}
Applying (\ref{mgf-est}) in Lemma~\ref{l0} to $X=x \xi_i$ and $Y=X -
X^2/2$ yields (\ref{In-1}) with \mbox{$\llvert O(1) \rrvert \leq5.5$}.
\end{pf}

Our proof is based on the following method of conjugated distributions
or the change of measure technique (Petrov \cite{Petrov1965}), which
can be traced back to Harald Cram\'er in 1938. Let $\xi_i$ be
independent random variables and $g(x)$ be a measurable function
satisfying $\e e^{g(\xi_i)} < \infty$. Let $\hat{\xi}_i$ be
independent random variables with the distribution functions given by
\[
\P( \hat{\xi}_i \leq y ) = { 1 \over\e e^{g(\xi_i)}} \e\bigl\{
e^{g(\xi_i)} I( \xi_i \leq y) \bigr\}. %
\]
Then, for any measurable function $f:\mathbb{R}^n \to\mathbb{R}$ and
any Borel measurable set $C$,
\[
\P\bigl\{ f(\xi_1, \ldots, \xi_n) \in C \bigr\} =
\prod_{i=1}^n \e e^{g(\xi_i)} \times\e
\bigl[ e^{- \sum_{i=1}^n
g(\hat{\xi}_i)} I\bigl\{ f(\hat{\xi}_1, \ldots, \hat{
\xi}_n) \in C \bigr\} \bigr]. %
\]
See, for example, \cite{JingShaoWang2003} and \cite{ShaoZhou2014} for
the applications of the change of measure method in deriving moderate
deviations.

\begin{pf*}{Proof of Proposition~\ref{ci}}
Let $Y_i = g(\xi_{i})
= \xi_{i,x} - \xi_{i,x}^2/2$ with $\xi_{i,x}=x\xi_i$, and let $\hat
\xi_1,\ldots, \hat\xi_n$ be independent random variables with $\hat
\xi_i$ having the distribution function
\begin{eqnarray}
V_i(y)=\e\bigl\{ e^{Y_i} I( \xi_i \leq y)
\bigr\} /\e e^{Y_i},\qquad y\in\mathds{R}.
\nonumber
\end{eqnarray}
Put $\widehat{Y_i}=g(\hat\xi_i) = x\hat\xi_i-x^2\hat\xi_i^{
2}/2$ and recall that $xW_n-x^2V_n^2/2= \sum_{i=1}^nY_i := S_Y $. Then
using the method of conjugated distributions gives
%
\begin{eqnarray} \label{conju-1}
&& \P\bigl( xW_n-x^2V_n^2/2 \geq
x^2/2-x \De_{1n} \bigr)
\nonumber
\\
&&\quad  = \P\Biggl\{ \sum_{i=1}^ng(
\xi_i) \geq x^2 - x \De_{1n}( \xi
_1,\ldots, \xi_n) \Biggr\}
\nonumber\\[-8pt]\\[-8pt]\nonumber
&&\quad = \prod
_{i=1}^n \e e^{Y_i} \times\e\bigl\{
e^{-\widehat S_Y } I\bigl( \widehat S_Y \geq x^2 /2 -x
\widehat{\De}_{1n} \bigr) \bigr\}
\nonumber
\\
&&\quad := I_{n,x} \times H_n,\nonumber
\end{eqnarray}
where $\widehat{S}_Y = \sum_{i=1}^n\widehat Y_i$, $ H_n = \e\{
e^{-\widehat S_Y} I( \widehat S_Y \geq x^2/2-x \widehat{\De}_{1n} ) \}
$ and $\widehat{\De}_{1n} =\De_{1n}(\hat\xi_1,\ldots,\hat\xi_n)$.

Set
\begin{eqnarray}
m_n = \sum_{i=1}^n\e
\widehat{Y}_i ,\qquad\sigma_n^2 = \sum
_{i=1}^n\var( \widehat{Y_i}
)\quad\mbox{and}\quad v_n = \sum_{i=1}^n
\e\llvert\widehat{Y_i}\rrvert^3 .
\nonumber
\end{eqnarray}
Then it follows from the definition of $\hat\xi_i$ that
\begin{eqnarray}
\e\widehat{Y_i} &=& \e\bigl(Y_i e^{ Y_i}
\bigr)/ \e e^{Y_i} ,
\nonumber
\\
\var(\widehat{Y_i}) &=& \e\bigl( Y_i^2
e^{Y_i} \bigr) /\e e^{Y_i} -(\e\widehat{Y_i})^2,
\nonumber
\\
\e\llvert\widehat{Y_i}\rrvert^3 &=& \e\bigl(\llvert
Y_i\rrvert^3 e^{Y_i}\bigr)/\e
e^{Y_i} .
\nonumber
\end{eqnarray}
Applying Lemma~\ref{l3} with $X=x\xi_i$ and $\lambda= 1$ yields
%
\begin{eqnarray} \label{mgf-expan}
\e e^{Y_i} &=& e^{O(1) \delta_{i,x} }, \qquad \e\bigl( Y_i
e^{Y_i}\bigr) = \bigl( x^2/2\bigr) \e\xi_i^2
+O(1) \delta_{i,x},
\nonumber\\[-8pt]\\[-8pt]\nonumber
\e\bigl( Y_i^2 e^{Y_i} \bigr) &=&
x^2 \e\xi_i^2 +O(1) \delta_{i,x},
\qquad\e\bigl( \llvert Y_i\rrvert^3 e^{Y_i}
\bigr) = O(1) \delta_{i,x}\nonumber
\end{eqnarray}
and $\{ \e( Y_i e^{Y_i}) \}^2 = O(1) \delta_{i,x}$. In view of (\ref
{mgf-est}) and (\ref{c1}), using a similar argument as in the proof of
(7.11)--(7.13) in \cite{JingShaoWang2003} gives
%
\begin{eqnarray}
m_n &=& \sum_{i=1}^n\e
\bigl(Y_i e^{Y_i}\bigr)/\e e^{Y_i}
=x^2/2 +O(1)L_{n,x}, \label{mn}
\\
\sigma_n^2 &=& \sum_{i=1}^n
\bigl\{ \e\bigl( Y_i^2e^{Y_i} \bigr) / \e
e^{Y_i} -( \e\widehat{Y}_i)^2 \bigr\} =
x^2+ O(1)L_{n,x}, \label{sigman}
\\
v_n &=& \sum_{i=1}^n\e\bigl(
\llvert Y_i\rrvert^3 e^{Y_i} \bigr)/\e
e^{Y_i} = O(1) L_{n,x}, \label{vn}
\end{eqnarray}
where all of the $O(1)$'s appeared above are bounded by an absolute
constant, say $C_1$. Taking into account the condition (\ref{rc}), we
have $ \sigma_n^2 \geq x^2/2 $, provided the constant $c_1$ in (\ref
{rc}) is sufficiently large, say, no larger than $( 4 C_1)^{-1}$.

Define the standardized sum $\widehat{W} := \widehat{W}_n = (
\widehat S_Y -m_n )/\sigma_n $, and let
\begin{eqnarray*}
\varepsilon_n = \sigma_n^{-1}\bigl(
x^2/2- m_n \bigr), \qquad r_n = \varepsilon
_n + \sigma_n .
\end{eqnarray*}
By (\ref{mn})--(\ref{vn}) and (\ref{rc}) with $c_1 \leq( 4C_1 )^{-1}$,
\begin{eqnarray}
\llvert\varepsilon_n\rrvert&\leq&\sqrt{2}C_1
x^{-1}L_{n,x},\qquad v_n\sigma
_n^{-3} \leq\sqrt{8}C_1 x^{-3}L_{n,x},
\label{error-2}
\\
\llvert r_n-x\rrvert&\leq&\llvert\varepsilon_n\rrvert+
\bigl\llvert\sigma_n^2 - x^2\bigr\rrvert/(
\sigma_n+x) \leq2C_1 x^{-1}L_{n,x}
\leq x/2, \label{error-3}
\end{eqnarray}
which leads to
\begin{eqnarray}
H_n \leq\e\bigl\{ \exp(-\sigma_n \widehat{W}
-m_n ) I( \widehat{W}-\varepsilon_n \geq- x \widehat{
\De}_{1n} /\sigma_n ) \bigr\} \leq H_{1n}+
H_{2n} \label{step-1}
\end{eqnarray}
with $H_{1n} = \e\{ \exp(-\sigma_n \widehat{W} - m_n )I( \widehat
{W}\geq\varepsilon_n ) \}$ and
\[
H_{2n} = \e\bigl\{ \exp(-\sigma_n\widehat{W}
-m_n )I( -x \widehat{\De}_{1n}/\sigma_n \leq
\widehat{W}-\varepsilon_n< 0 ) \bigr\}. %
\]

Denote by $G_n$ the distribution function of $\widehat{W}$, then
$H_{1n}$ reads as
%
\begin{eqnarray} \label{decom-I2}
H_{1n } & =& \int_{\varepsilon_n}^{\infty}
e^{-\sigma_n t-m_n} \,d G_n(t)
\nonumber
\\
&=& e^{-x^2/2}\int_0^{\infty}e^{-\sigma_n s} \,d
G_n(s +\varepsilon_n)
\nonumber\\[-8pt]\\[-8pt]\nonumber
&=& e^{-x^2/2} \biggl\{ \int_0^{\infty}e^{-\sigma_n s }
\,d \bigl\{ G_n( s+\varepsilon_n)-\Phi( s +
\varepsilon_n)\bigr\}
\nonumber
+ \int_0^{\infty}e^{-\sigma_n s } \,d\Phi( s +
\varepsilon_n) \biggr\}
\nonumber
\\
&:=& e^{-x^2/2}(J_{1n}+J_{2n}).\nonumber
\end{eqnarray}
Using integration by parts for the Lebesgue--Stieltjes integral, the
Berry--Esseen inequality, (\ref{error-2}) and the following upper and
lower tail inequalities for the standard normal distribution
\begin{equation}
\frac{t}{1+t^2} e^{-t^2/2} \leq\int_t^\infty
e^{-u^2/2} \,du \leq\frac{1}{t} e^{-t^2/2}\qquad\mbox{for } t> 0,
\label{gaussiantails}
\end{equation}
we have
\begin{eqnarray*}
\llvert J_{1n}\rrvert&\leq&2\sup_{t\in\mathds{R}}\bigl\llvert
G_n(t)-\Phi(t)\bigr\rrvert\leq4 v_n
\sigma_n^{-3} \leq C_2 e^{x^2/2}\bigl\{
1- \Phi(x)\bigr\} x^{-2} L_{n,x} .
\end{eqnarray*}

For $J_{2n}$, by the change of variables we have
\[
\qquad J_{2n} = \frac{e^{-\varepsilon_n^2/2}}{\sqrt{2\pi}}\int_0^{\infty}
\exp\bigl\{ -( \sigma_n+ \varepsilon_n)t-t^2/2
\bigr\} \,dt = \frac
{e^{-\varepsilon_n^2/2}}{\sqrt{2\pi}}\Psi(r_n),
\]
where
\[
\Psi(x) =\frac{ 1-\Phi(x)}{\Phi'(x)}=e^{x^2/2}\int_x^{\infty
}e^{-t^2/2}
\,dt. %
\]
By (\ref{gaussiantails}),
\[
\Psi(s) \geq\frac{s}{1+s^2}\quad\mbox{and}\quad 0< -\Psi'(s) = 1- s
e^{s^2/2}\int_s^\infty e^{-t^2/2} \,dt
\leq\frac{1}{1+s^2}\qquad\mbox{for } s\geq0. %
\]
In view of (\ref{error-3}), $x/2 \leq r_n \leq3x/2$. Consequently,
$\llvert \Psi(r_n) - \Psi(x) \rrvert \leq4\llvert r_n - x\rrvert
/(4+x^2)$, which further
implies that
\begin{eqnarray}
J_{2n} \leq\frac{1}{\sqrt{2\pi}}\biggl\{ \Psi(x) + \frac
{4}{4+x^2}
\llvert r_n -x \rrvert\biggr\} \leq e^{x^2/2}\bigl\{ 1-\Phi(x)
\bigr\} \bigl(1+ C_3 x^{-2}L_{n,x} \bigr).
\nonumber
\end{eqnarray}
By (\ref{decom-I2}) and the above upper bounds for $J_{1n}$ and $J_{2n}$,
\begin{equation}
H_{1n} \leq\bigl\{1-\Phi(x)\bigr\} \bigl( 1+C_4
x^{-2}L_{n,x} \bigr) . \label{I1}
\end{equation}

As for $H_{2n}$, note that $x \widehat{\De}_{1n} \leq1$ by (\ref
{Den}). Therefore,
\begin{equation}
H_{2n} \leq e^{1-x^2/2} \times\P( \varepsilon_n- x
\widehat{\De}_{1n}/\sigma_n \leq\widehat{W}<
\varepsilon_n ). \label{ubd-I2}
\end{equation}
Applying inequality (\ref{con-ineq-1}) to the standardized sum
$\widehat
{W}$ gives
%
\begin{eqnarray} \label{con2}
&& \P(\varepsilon_n- x\widehat{\De}_{1n}/
\sigma_n \leq\widehat{W} \leq\varepsilon_n )
\nonumber\\[-8pt]\\[-8pt]\nonumber
&&\quad \leq17 v_n \sigma_n^{-3} + 5 x
\sigma_n^{-1} \e\llvert\widehat{\De}_{1n}
\rrvert+ 2 x \sigma_n^{-2} \sum
_{i=1}^n \e\bigl\llvert\widehat{Y_i}
\bigl\{ \widehat{\De}_{1n}- \widehat{\De}_{1n}^{(i)}
\bigr\} \bigr\rrvert,\nonumber
\end{eqnarray}
where $\widehat{\De}_{1n}^{(i)} $ can be any random variable that is
independent of $\hat\xi_i$. By (\ref{error-2}), it is readily known
that $v_n \sigma_n^{-3} \leq\sqrt{8} C_1 x^{-3}L_{n,x}$. For the
other two terms, recall that the distribution function of $\hat\xi_i$\vadjust{\goodbreak}
is given by $V_i(y)= \e\{e^{Y_i} I(\xi_i \leq y)\}/ \e e^{Y_i} $
with $Y_i=g(\xi_i)$. Then
%
\begin{eqnarray} \label{t-1}
\e\llvert\widehat{\De}_{1n}\rrvert &=& \int\cdots\int
\De_{1n}(x_1,\ldots, x_n) \,d
V_1(x_1) \cdots d V_n(x_n)
\nonumber
\\
&=& I_{n,x}^{-1} \int\cdots\int\De_{1n}(x_1,
\ldots, x_n) \prod_{i=1}^n
\bigl\{ e^{g(x_i)} \,d F_{\xi_i}(x_i ) \bigr\}
\\
&=& I_{n,x}^{-1} \times\e\bigl( \llvert\De_{1n}
\rrvert e^{\sum_{i=1}^nY_i} \bigr).\nonumber
\end{eqnarray}
It can be similarly obtained that for each $i=1,\ldots,n$,
%
\begin{eqnarray}
\e\bigl\llvert\widehat{Y}_i \bigl\{ \widehat{
\De}_{1n}- \widehat{\De}_{1n}^{(i)} \bigr\} \bigr
\rrvert= I_{n,x}^{-1} \times\e\bigl[ \bigl\llvert
Y_i \bigl\{\De_{1n}-\De_{1n}^{(i)}
\bigr\}\bigr\rrvert e^{ \sum_{j=1}^n Y_j } \bigr]. \label{t-2}
\end{eqnarray}

Assembling (\ref{ubd-I2})--(\ref{t-2}), we obtain from (\ref
{gaussiantails}) that
\begin{eqnarray*}
H_{2n} & \leq& C_5 \bigl\{1-\Phi(x) \bigr\} \Biggl(
x^{-2}L_{n,x} + I_{n,x}^{-1} \times x \e
\bigl( \llvert\De_{1n}\rrvert e^{ \sum_{j=1}^n Y_j } \bigr)
\\
&&{} + I_{n,x}^{-1} \sum
_{i=1}^n \e\bigl[ \bigl\llvert Y_i
\bigl\{ \De_{1n}-\De_{1n}^{(i)}\bigr\} \bigr\rrvert
e^{ \sum_{j=1}^n Y_j } \bigr] \Biggr)
\\
& \leq& C_5 \bigl\{1-\Phi(x) \bigr\} \Biggl[ x^{-2}L_{n,x}
+ I_{n,x}^{-1} \times x \e\bigl( \llvert\De_{1n}
\rrvert e^{ \sum_{j=1}^n Y_j } \bigr)
\\
&&{} + 2 I_{n,x}^{-1} \sum
_{i=1}^n \e\bigl\{\min\bigl(\llvert
\xi_{i,x} \rrvert, 1 \bigr)\bigl\llvert\De_{1n}-
\De_{1n}^{(i)}\bigr\rrvert e^{ \sum_{j\neq i}^n Y_j
} \bigr\} \Biggr],
\end{eqnarray*}
where the last step follows from the inequality $\llvert t -
t^2/2\rrvert
e^{t-t^2/2} \leq2 \min(1, \llvert t\rrvert )$ for $t\in\mathbb{R}$.

Recall that $\De_{1n} \leq x(V_n^2-1)^2+\llvert D_{1n} \rrvert +
x\llvert D_{2n} \rrvert $. To
finish the proof of (\ref{con-u}), we only need to consider the
contribution from $x (V_n^2 -1)^2$. For notational convenience, let
$Z_i= \xi_i^2- \e\xi_i^2$ for $1\leq i\leq n $, such that $V_n^2-1=
\sum_{i=1}^nZ_i $ and
\begin{eqnarray*}
\bigl(V_n^2-1\bigr)^2 -\bigl\{
\bigl(V_n^2-1\bigr)^2 \bigr\}^{(i)} =
Z_i^2+2 Z_i \cdot\sum
_{j\neq i} Z_j.
\end{eqnarray*}
By Lemma~\ref{De1}, (\ref{ubd-I2}) and (\ref{con2}),
\begin{equation} \label{I2}
H_{2n} \leq C_6 \bigl\{ 1-\Phi(x) \bigr\} \bigl\{
R_{n,x} + x^{-2} L_{n,x}(1+ L_{n,x})
e^{C_7 \max_{i} \delta_{i,x}} \bigr\}.
\end{equation}

Together, (\ref{conju-1}), (\ref{step-1}), (\ref{I1}), (\ref{I2})
and Lemma
\ref{In0} prove (\ref{con-u}).
\end{pf*}

%
\begin{lemma} \label{De1}
For $x\geq1$, we have
\begin{equation}
\e\bigl\{ \bigl(V_n^2-1\bigr)^2
e^{ \sum_{j=1}^n Y_j } \bigr\} \lesssim I_{n,x} \times x^{-4}
L_{n,x}( 1+ L_{n,x}) \label{est-1}
\end{equation}
and
\begin{eqnarray}
\sum_{i=1}^n\e\biggl\{ \biggl\llvert
Y_i \biggl( Z_i^2+2 Z_i
\sum_{j\neq i} Z_j \biggr) \biggr\rrvert
e^{\sum_{j=1}^n Y_j} \biggr\} \lesssim I_{n,x} \times x^{-4}L_{n,x}(
1+L_{n,x}) . \label{est-2}
\end{eqnarray}
\end{lemma}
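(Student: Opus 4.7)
}

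The plan is to expand each quantity into a product of one‑variable expectations via the independence of $\xi_1,\dots,\xi_n$, then estimate the individual factors with Lemma \ref{l3} (applied with $X=\xi_{i,x}=x\xi_i$, so that $Z=X^2-\e X^2=x^2 Z_i$ and $Y=Y_i$). Throughout, I will use that condition \eqref{c1} gives $\delta_{i,x}\le 1$, so by Lemma \ref{l2} each factor $\e e^{Y_j}$ is bounded away from $0$ and from $\infty$ by absolute constants; hence the products obtained after removing one or two factors from $I_{n,x}=\prod_j \e e^{Y_j}$ are $\lesssim I_{n,x}$.

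For \eqref{est-1}, I expand $(V_n^2-1)^2=\sum_i Z_i^2+\sum_{i\ne j} Z_i Z_j$ and use independence to write
\begin{align*}
\e[(V_n^2-1)^2 e^{\sum_j Y_j}]
&=\sum_i \e[Z_i^2 e^{Y_i}]\prod_{k\ne i}\e e^{Y_k}
+\sum_{i\ne j}\e[Z_i e^{Y_i}]\,\e[Z_j e^{Y_j}]\prod_{k\ne i,j}\e e^{Y_k}.
\end{align*}
Lemma \ref{l3}(b) with $Z=x^2 Z_i$ gives $\e[Z_i^2 e^{Y_i}]\lesssim x^{-4}(\delta_{i,x}+\delta_{i,x}^2)\lesssim x^{-4}\delta_{i,x}$, and Lemma \ref{l3}(a) gives $|\e[Z_i e^{Y_i}]|\lesssim x^{-2}\delta_{i,x}$. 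Substituting and summing yields the diagonal contribution $\lesssim I_{n,x}\,x^{-4}L_{n,x}$ and the off‑diagonal contribution $\lesssim I_{n,x}\,x^{-4}L_{n,x}^2$, which together match the required bound.

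For the second inequality, split the integrand into the diagonal piece $|Y_i|Z_i^2$ and the cross piece $2|Y_i Z_i|\cdot|\sum_{j\ne i}Z_j|$. The diagonal piece is immediate from Lemma \ref{l3}(d): $\e[|Y_i|Z_i^2 e^{Y_i}]\lesssim x^{-4}\delta_{i,x}$, which after multiplication by $\prod_{k\ne i}\e e^{Y_k}\lesssim I_{n,x}$ and summation contributes $\lesssim I_{n,x}\,x^{-4}L_{n,x}$. For the cross piece, since $Y_iZ_i$ depends only on $\xi_i$ and $\sum_{j\ne i}Z_j$ only on the other coordinates, independence gives
\begin{align*}
\e\Bigl[|Y_i Z_i|\bigl|\textstyle\sum_{j\ne i}Z_j\bigr|e^{\sum_k Y_k}\Bigr]
=\e[|Y_i Z_i|e^{Y_i}]\cdot \e\Bigl[\bigl|\textstyle\sum_{j\ne i}Z_j\bigr|e^{\sum_{k\ne i}Y_k}\Bigr].
\end{align*}
Lemma \ref{l3}(c) gives $\e[|Y_i Z_i|e^{Y_i}]\lesssim x^{-2}\delta_{i,x}$, and for the second factor I apply Cauchy--Schwarz with the nonnegative weight $e^{\sum_{k\ne i}Y_k}$:
\begin{align*}
\e\Bigl[\bigl|\textstyle\sum_{j\ne i}Z_j\bigr|e^{\sum_{k\ne i}Y_k}\Bigr]
\le \Bigl(\e\bigl[(\textstyle\sum_{j\ne i}Z_j)^2 e^{\sum_{k\ne i}Y_k}\bigr]\Bigr)^{1/2}\Bigl(\e e^{\sum_{k\ne i}Y_k}\Bigr)^{1/2}.
\end{align*}
The first factor is estimated exactly as in \eqref{est-1} (with one term removed) and is $\lesssim I_{n,x}\,x^{-4}L_{n,x}(1+L_{n,x})$, while the second is $\lesssim I_{n,x}$. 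Summing over $i$ gives $\lesssim I_{n,x}\,x^{-4}L_{n,x}^{3/2}(1+L_{n,x})^{1/2}$, which is $\lesssim I_{n,x}\,x^{-4}L_{n,x}(1+L_{n,x})$ since $L_{n,x}^{1/2}\le (1+L_{n,x})^{1/2}$.

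The only mildly delicate point is the cross‑term in the second inequality: one might be tempted to bound $\e[|\sum_{j\ne i}Z_j|e^{\sum_{k\ne i}Y_k}]$ by distributing the absolute value over $j$ and losing the cancellation in $\e[Z_j e^{Y_j}]$; that would produce an extra factor of $n$ rather than $L_{n,x}$. Using Cauchy--Schwarz with the exponential weight preserves the $(\sum Z_j)^2$ structure so that Lemma \ref{l3}(a)'s sharp bound $|\e[Z_j e^{Y_j}]|\lesssim x^{-2}\delta_{j,x}$ can be exploited, which is the key ingredient.
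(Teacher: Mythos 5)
Your proposal is correct and follows essentially the same route as the paper: expand $(V_n^2-1)^2$ by independence into diagonal and off-diagonal terms, bound the one-variable factors via Lemma~\ref{l3} applied to $X=x\xi_i$, and handle the cross term in \eqref{est-2} by factoring out $\e(|Y_iZ_i|e^{Y_i})$ and applying Cauchy--Schwarz with the exponential weight so that \eqref{est-1} can be reused. Your explicit appeal to \eqref{c1} to control the products with one or two factors of $\e e^{Y_j}$ removed is a slightly more careful rendering of what the paper handles via \eqref{mgf-est}, but it is not a different argument.
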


\begin{pf} 
Recall that $V_n^2-1=\sum_{i=1}^nZ_i$. By independence,
%
\begin{eqnarray} \label{l31}
&& \e\Biggl\{ \Biggl( \sum_{i=1}^nZ_i
\Biggr)^2 e^{\sum_{j=1}^n Y_j} \Biggr\}
\nonumber
\\
&&\quad = \sum_{i=1}^n\e\bigl(
Z_i^2 e^{Y_i}\bigr) \e e^{\sum_{j\neq i} Y_j} + \sum
_{i\neq j} \e\bigl( Z_i e^{Y_i}
\bigr) \cdot\e\bigl( Z_j e^{Y_j} \bigr) \cdot\e
e^{\sum_{k=1, k\neq i,j}^n Y_k}
\\
&&\quad = I_{n,x} \Biggl\{ \sum_{i=1}^n
\e\bigl( Z_i^2 e^{Y_i}\bigr)/\e
e^{Y_i} + \sum_{i\neq j} \e\bigl(
Z_i e^{Y_i}\bigr) \cdot\e\bigl( Z_j
e^{Y_j}\bigr) / \bigl( \e e^{Y_i} \e e^{Y_j} \bigr)
\Biggr\}.\nonumber
\end{eqnarray}
It follows from Lemma~\ref{l3} that $\llvert \e( Z_i e^{Y_i} )
\rrvert \lesssim
x^{-2}\delta_{i,x}$ and $\e( Z_i^2 e^{Y_i}) \lesssim x^{-4} ( \delta
_{i,x}+\delta_{i,x}^2 )$. Substituting these into (\ref{l31}) proves
(\ref{est-1}) in view of (\ref{mgf-est}).

Again, applying Lemma~\ref{l3} gives us
\begin{eqnarray*}
\e\bigl( \llvert Z_i Y_i\rrvert e^{Y_i}
\bigr) \lesssim x^{-2} \delta_{i,x}\quad\mbox{and}\quad \e\bigl(
Z_i^2 \llvert Y_i\rrvert e^{Y_i}
\bigr) \lesssim x^{-4}\bigl( \delta_{i,x}+ \delta
_{i,x}^2 \bigr),
\end{eqnarray*}
which together with H\"older's inequality imply
\begin{eqnarray}
&& \sum_{i=1}^n\e\biggl\{ \biggl\llvert
Y_i \biggl( Z_i^2+2 Z_i
\sum_{j\neq
i} Z_j \biggr) \biggr\rrvert
e^{\sum_{j=1}^n Y_j} \biggr\}
\nonumber
\\
&&\quad  \lesssim I_{n,x} \times x^{-4 }L_{n,x}(1+L_{n,x})
\nonumber
\\
&&\qquad{} + 2 \sum_{i=1}^n\e\bigl( \llvert
Z_i Y_i\rrvert e^{Y_i} \bigr) \biggl\{ \e
\biggl(\sum_{j \neq i} Z_j
\biggr)^2 e^{\sum_{j\neq i}Y_j} \biggr\}^{1/2} \cdot\bigl( \e
e^{\sum_{j\neq i}Y_j} \bigr)^{1/2}
\nonumber
\\
&&\quad \lesssim I_{n,x} \times x^{-4} L_{n,x}(1+L_{n,x}),
\nonumber
\end{eqnarray}
where we use (\ref{est-1}) in the last step. This completes the proof
of (\ref{est-2}).
\end{pf}

\begin{pf*}{Proof of Proposition~\ref{tr-ineq}} This proof is
similar to the argument used in \cite{Shao1999}. First, consider the
following decomposition:
%
\begin{eqnarray}\label{3-p}
&& \P\bigl( W_n/V_n \geq x-1/2x , \bigl\llvert
V_n^2-1\bigr\rrvert>1/ 2x \bigr)
\nonumber
\\
&&\quad \leq\P\bigl\{ W_n/V_n \geq x-1/2x, (1+1/2x
)^{1/2} < V_n\leq4 \bigr\}
\nonumber
\\
&&\qquad{} + \P\bigl\{ W_n/V_n \geq x-1/2x, V_n
< (1-1/2x )^{1/2} \bigr\}
\\
&&\qquad{} + \P( W_n/V_n \geq
x-1/2x, V_n>4 )
\nonumber
\\
&&\quad := \sum_{\nu=1}^3 \P\bigl\{
(W_n, V_n ) \in\mathcal{E}_{\nu} \bigr\},\nonumber
\end{eqnarray}
where $\mathcal{E}_{\nu} \subseteq\mathbb{R}\times\mathbb{R}^+$,
$1\leq{\nu}\leq3$ are given by
\begin{eqnarray*}
\mathcal{E}_1 &=& \bigl\{ (u,v) \in\mathbb{R}\times\mathbb{R}^+:
u/v \geq x-1/2x , \sqrt{1+1/2x} <v \leq4 \bigr\},
\\
\mathcal{E}_2 &=& \bigl\{ (u,v) \in\mathbb{R}\times\mathbb{R}^+: u/v
\geq x-1/2x , v <\sqrt{1-1/2x} \bigr\},
\nonumber
\\
\mathcal{E}_3 &=& \bigl\{ (u,v) \in\mathbb{R}\times\mathbb{R}^+: u/v
\geq x-1/2x, v >4 \bigr\}.
\end{eqnarray*}

To bound the probability $\P\{ (W_n, V_n) \in\mathcal{E}_1\}$, put
$t_1 =x\sqrt{1+1/2x}$ and $\lambda_1 =t_1 (x-1/2x)/8$. By Markov's inequality,
\begin{eqnarray*}
\P\bigl\{ (W_n,V_n)\in\mathcal{E}_1 \bigr
\} \leq x^2 e^{ -\inf_{(u,v)\in
\mathcal{E}_1}(t_1 u-\lambda_1 v^2) } \e\bigl\{ \bigl(V_n^2-1
\bigr)^2 e^{t_1 W_n-\lambda_1 V_n^2} \bigr\},
\end{eqnarray*}
where it can be easily verified that
\begin{eqnarray*}
\inf_{(u,v)\in\mathcal{E}_1}\bigl(t_1 u-\lambda_1
v^2\bigr) = x^2+x/2-\lambda_1(1+1/x)-1/2-1/
4 x .
\end{eqnarray*}

However, recall that $V_n^2-1=\sum_{i=1}^nZ_i$ with $Z_i= \xi_i^2- \e
\xi_i^2$, it follows from the independence and (\ref{f-1}) that
%
\begin{eqnarray} \label{t-la}
&& \e\bigl\{ \bigl(V_n^2-1\bigr)^2
e^{t_1 W_n-\lambda_1 V_n^2}\bigr\}
\nonumber
\\
&&\quad  = \sum_{i=1}^n\e\bigl(
Z_i^2 e^{t_1\xi_i-\lambda_1 \xi_i^2} \bigr) \times\prod
_{j\neq i} \e\bigl(e^{t_1 \xi_j -\lambda_1 \xi_j^2}\bigr)
\nonumber\\[-8pt]\\[-8pt]\nonumber
&&\qquad{} + \sum_{i \neq j} \e\bigl( Z_i
e^{t_1\xi_i -\lambda_1 \xi_i^2} \bigr) \e\bigl( Z_j e^{t_1\xi_j-\lambda
_1 \xi_j^2} \bigr) \times
\prod_{k \neq i,
j} \e\bigl( e^{t_1 \xi_k -\lambda_1 \xi_k^2} \bigr)
\nonumber
\\
&&\quad \lesssim x^{-4} L_{n,x} ( 1+L_{n,x} ) \exp\bigl(
t_1^2/2-\lambda_1+ C L_{n,x}
\bigr),\nonumber
\end{eqnarray}
where we use the fact $t_1^2/2-\lambda_1>0$. Consequently,
%
\begin{eqnarray} \label{p1}
&& \P\bigl\{ (W_n,V_n)\in\mathcal{E}_1
\bigr\}/\bigl\{ 1-\Phi(x)\bigr\}
\nonumber\\[-8pt]\\[-8pt]\nonumber
&&\quad \lesssim x^{-2} L_{n,x}( 1+L_{n,x}) \exp( -3x/8+C
L_{n,x}) \lesssim L_{n,x} \exp( -3 x/8+ C L_{n,x}
).
\end{eqnarray}

Likewise, we can bound the probability $\P\{(W_n,V_n) \in\mathcal
{E}_2\}$ by using $(t_2 , \lambda_2)$ instead of $(t_1, \lambda_1)$,
given by
\[
t_2 = x\sqrt{1-1/2 x }, \qquad\lambda_2
=2x^2-1. %
\]
Note that $
\inf_{(u,v)\in\mathcal{E}_2}( t_2 u- \lambda_2 v^2)=x^2-x/2-1/2+1/
4x -\lambda_2 (1-1/2x )$. Together with (\ref{t-la}), this yields
%
\begin{eqnarray} \label{p2}
&& \P\bigl\{ (W_n,V_n)\in\mathcal{E}_2 \bigr
\} /\bigl\{1-\Phi(x)\bigr\}
\nonumber\\[-8pt]\\[-8pt]\nonumber
&&\quad \lesssim x^{-2} L_{n,x}( 1+L_{n,x}) \exp( -3
x/4+C L_{n,x} ) \lesssim L_{n,x}\exp( -3 x/ 4 +
CL_{n,x} ) .
\end{eqnarray}

For the last term $\P\{ (W_n, V_n) \in\mathcal{E}_3 \}$, we use a
truncation technique and the probability estimation of binomial
distribution. Let $\widehat{W}_n = \sum_{i=1}^n\xi_i I( x\xi_i \leq
a_0 ) $, where $a_0$ is an absolute constant to be determined (see
(\ref{j3})). Observe that
\begin{eqnarray*}
\P\bigl\{ (W_n,V_n)\in\mathcal{E}_3 \bigr\}
&\leq&\P\Biggl( \widehat{W}_n \geq2x-1/x , \sum
_{i=1}^n\xi_i^2 I\bigl( x
\llvert\xi_i \rrvert\leq1\bigr) \geq3 \Biggr)
\\
&&{} + \P\Biggl( \widehat{W}_n \geq2x-1/x , \sum
_{i=1}^n\xi_i^2 I\bigl(x
\llvert\xi_i \rrvert> 1 \bigr) \geq13 \Biggr)
\\
&&{} + \P\Biggl( \sum_{i=1}^n
\xi_i I\{x\xi_i >a_0 \} \geq(x-1/2x
)V_n/2 \Biggr)
\\
&:=& J_{3n}+J_{4n}+J_{5n}.
\end{eqnarray*}
Let
\[
\bar{V}^2_n=\sum_{i=1}^n
\bar{\xi}_i^{ 2}\qquad\mbox{with } \bar{\xi}_i=
\xi_i I\bigl(x \llvert\xi_i \rrvert\leq1 \bigr), 1
\leq i\leq n, %
\]
such that
\begin{eqnarray*}
J_{3n} &=& \P\bigl( \widehat{W}_n \geq2x-1/x ,
\bar{V}_n^2 \geq3 \bigr)
\leq (\sqrt{e}/4) e^{-x^2} \e\bigl\{ \bigl(\bar{V}_n^2-1
\bigr)^2 e^{x \widehat
{W}_n/2} \bigr\}
\\
&\leq& e^{-x^2} \Biggl( \e\Biggl[ \Biggl\{ \sum
_{i=1}^n\bigl( \bar{\xi}_i^{ 2}
- \e\bar{\xi}_i^{ 2} \bigr) \Biggr\}^2
e^{x \widehat{W}_n/2} \Biggr] + x^{-4}L_{n,x}^2 \e
e^{x\widehat{W}_n/2} \Biggr).
\end{eqnarray*}
Noting that $\e\{ \xi_i I( x\xi_i \geq a_0 )\} =- \e\{ \xi_i I (
x\xi_i >a_0 )\} \leq0$ for every $i$, and
\[
e^s \leq1+s+s^2/2+ \llvert s\rrvert^3
e^{\max(s,0)}/6\qquad\mbox{for all } s, %
\]
we obtain
%
\begin{eqnarray}\label{j3-0}
\e e^{x \widehat{W}_n/2} &\leq&\prod_{i=1}^n
\biggl[ 1+\frac{x^2}{8}\e\xi_i^2 +\frac{e^{a_0/2}x^3}{48}
\e\bigl\{ \llvert\xi_i \rrvert^3 I\bigl(\llvert x
\xi_i\rrvert\leq a_0\bigr) \bigr\} \biggr]
\nonumber
\\
&\leq&\prod_{i=1}^n \biggl\{ 1+
\frac{x^2}{8}\e\xi_i^2 +\frac{e^{a_0/2}x^3}{48} \e\llvert
\xi_i \rrvert^3 I\bigl( x \llvert\xi_i
\rrvert\leq1 \bigr)
\nonumber
\\[-8pt]
\\[-8pt]
\nonumber
&&{} + \frac{a_0 e^{a_0/2}x^2}{48}\e\xi_i^2 I
\bigl(x \llvert\xi_i \rrvert> 1 \bigr) \biggr\}\quad\qquad
\\
&\leq&\exp\bigl\{ x^2/8 + O(1)L_{n,x} \bigr\}.\nonumber
\end{eqnarray}
Similar to the proof of (\ref{t-la}), it follows that
\begin{equation}
J_{3n} \lesssim x^{-4}L_{n,x}(
1+L_{n,x}) \exp\bigl\{ -7x^2/8 +O(1) L_{n,x}
\bigr\}. \label{j1}
\end{equation}

To bound $J_{4n}$, let $\widehat{W}_n^{(i)}=\widehat{W}_n-\xi_iI (
x\xi_i\leq a_0 ) $, then applying (\ref{j3-0}) gives, for any $i$,
\[
\e e^{x \widehat{W}_n^{(i)}/2} \leq\exp\bigl\{ x^2/8+ O(1)L_{n,x}
\bigr\}.
\]
Subsequently,
%
\begin{eqnarray}\label{j2}
J_{4n} &\leq& (\sqrt{e}/13) e^{-x^2} \sum
_{i=1}^n\e\bigl\{ \xi_i^2
e^{(x/2) \xi_i I( x\xi_i \leq a_0)} I\bigl( x\llvert\xi_i \rrvert>1
\bigr) \bigr\}
\times\e e^{x \widehat{W}_n^{(i)}/2}
\nonumber\\[-8pt]\\[-8pt]\nonumber
&\leq& \bigl( \sqrt{e^{1+a_0}}/13\bigr) x^{-2}L_{n,x}
\exp\bigl\{ -7x^2/8 +O(1)L_{n,x} \bigr\}.
\end{eqnarray}

Finally, we study $J_{5n}$. By Cauchy's inequality,
%
\begin{eqnarray} \label{j3}
J_{5n} &\leq& \P\Biggl\{ \sum_{i=1}^nI
\bigl( \llvert x\xi_i\rrvert>a_0 \bigr) \geq( x-1/2x
)^2/4 \Biggr\}
\nonumber
\\
&\leq& \frac{4e^{-(x-1/2x )^2}}{(x-1/2x )^2} \sum_{i=1}^n\e
\bigl\{ e^{4 I( \llvert x\xi_i\rrvert >a_0)} I\bigl( \llvert x \xi
_i\rrvert
>a_0 \bigr) \bigr\} \times\prod_{j\neq i} \e
e^{4 I( \llvert x\xi_j\rrvert >a_0 )}
\nonumber
\\
& \lesssim & x^{-2} e^{-x^2} \sum_{i=1}^ne^4
\P\bigl( \llvert x \xi_i\rrvert> a_0 \bigr) \times
\prod_{j \neq i} \bigl\{ 1+ e^4 \P\bigl(
\llvert x \xi_j\rrvert> a_0 \bigr) \bigr\}
\\
& \lesssim & a_0^{-2} \exp\bigl\{ \bigl( e^4
a_0^{-2} - 1\bigr) x^2 \bigr\} \sum
_{i=1}^n\e\xi_i^2 I\bigl(
x\llvert\xi_i \rrvert>1 \bigr)
\nonumber
\\
& \lesssim & x^{-2}L_{n,x}\exp\bigl( -x^2/2-x^2/22
\bigr)\nonumber
\end{eqnarray}
by letting $a_0=11$.

Adding up (\ref{j1})--(\ref{j3}), we get
\begin{eqnarray*}
\P\bigl\{ (W_n,V_n)\in\mathcal{E}_3 \bigr\}
\lesssim\bigl\{ 1-\Phi(x) \bigr\} L_{n,x}\exp( C L_{n,x}).
\end{eqnarray*}
This, together with (\ref{p1}) and (\ref{p2}) yields (\ref{part-1}).
\end{pf*}

\begin{pf*}{Proof of Proposition~\ref{ci-lbd}}
Retain the notation in the proof of Proposition~\ref{ci}, and recall
that $\De_{2n} = x D_{2n}/2 - D_{1n}, \widehat W= \sum_{i=1}^n\widehat
Y_i$. Analogous to (\ref{conju-1}) and (\ref
{step-1}), we
see that
%
\begin{eqnarray} \label{c-lbd}
&& \P\bigl( xW_n-x^2V_n^2/2 \geq
x^2/2 + x \De_{2n} \bigr)
\nonumber
\\
&&\quad  = I_{n,x} \e\bigl\{ e^{- \widehat W } I\bigl( \widehat W \geq
x^2/2 +x \widehat{\De}_{2n} \bigr) \bigr\}
\\
&&\quad \geq I_{n,x} \bigl[ \e\bigl\{ \exp(-\sigma_n \widehat{W}
- m_n ) I( \widehat{W} \geq\varepsilon_n ) \bigr\}\nonumber
\\
&&\qquad{} -\e\bigl\{ \exp(-\sigma_n\widehat{W} - m_n
) I( \varepsilon_n \leq\widehat{W} <\varepsilon_n+ x
\widehat{\De}_{2n}/\sigma_n ) \bigr\} \bigr]
\nonumber
\\
&&\quad \geq I_{n,x} \biggl\{ \int_{\varepsilon_n}^{\infty}e^{-\sigma_n t
-m_n }
\,dG_n(t) - e^{-x^2/2} \P(\varepsilon_n \leq
\widehat{W} < \varepsilon_n+ x \widehat{\De}_{2n}/
\sigma_n ) \biggr\}
\nonumber
\\
&&\quad := I_{n,x} \bigl( H_{1n}-H _{2n}'
\bigr),\nonumber
\end{eqnarray}
for $H_{1n}$ given in (\ref{step-1}), and where $\varepsilon_n =
\sigma
_n^{-1}( x^2/2- m_n )$,
\[
\widehat{\De}_{2n} = \De_{2n}(\hat\xi_1,
\ldots,\hat\xi_n),\qquad H_{2n}'=e^{-x^2/2 }
\P(\varepsilon_n \leq\widehat{W} < \varepsilon_n+ x
\widehat{\De}_{2n}/\sigma_n ). %
\]

Following the proof of (\ref{I1}), it can be similarly obtained that
\begin{equation}
H_{1n} \geq\bigl\{ 1- \Phi(x) \bigr\} \bigl( 1- C x^{-2}
L_{n,x} \bigr). \label{kn1-01}
\end{equation}
Replacing $\widehat\De_{1n}$ with $\widehat\De_{2n}$ in (\ref
{ubd-I2}) and using the same argument that leads to (\ref{I2}) implies
\begin{equation}
H_{2n}' \leq C \bigl\{ 1-\Phi(x) \bigr\}
R_{n,x}. \label{kn2-02}
\end{equation}

Substituting (\ref{In-1}), (\ref{kn1-01}) and (\ref{kn2-02}) into
(\ref{c-lbd})
proves (\ref{con-l}).
\end{pf*}

\section{Proof of Theorem \texorpdfstring{\protect\ref{t11}}{3.1}}
\label{proof2sec}

Throughout this section, we use $C, C_1, C_2, \ldots$ and $c, c_1,
c_2, \ldots$ to denote positive constants that are independent of $n$.

\subsection{Outline of the proof}

Put $\tilde{h}=(h-\theta)/\sigma$ and $\tilde{h}_1 =(h_1 -\theta
)/\sigma$, such that $\tilde{h}_1(x)=\e\{ \tilde{h}(X_1, X_2,
\ldots, X_m) \mid X_1=x \} $ and $\tilde{h}_1(X_1), \ldots, \tilde
{h}_1(X_n)$ are i.i.d. random variables with zero means and unit
variances. Using this notation, condition (\ref{k-c}) can be written as
\begin{equation}
\tilde{h}^2(x_1,\ldots,x_m) \leq
c_0 \Biggl\{ \tau+\sum_{i=1}^m
\tilde{h}_1^2(x_i) \Biggr\}.
\label{k-cprime}
\end{equation}

By the scale-invariance property of Studentized $U$-statistics, we can
replace, respectively, $h$ and $h_1$ with $\tilde{h}$ and $\tilde
{h}_1$, which does not change the definition of $T_n$. For ease of
exposition, we still use $h$ and $h_1$ but assume without loss of
generality that $ \e h_{1 i} =0$ and $\e h_{1 i}^2 =1$, where $h_{1 i}
:= h_1(X_i)$ for $i=1,\ldots,n $.

For $s_1^2$ given in (\ref{s1}), observe that
\[
\frac{(n-m)^2 }{ (n-1)} s_1^2 = \sum
_{i=1}^n(q_i-U_n)^2=
\sum_{i=1}^nq_i^2-nU_n^2
. %
\]
Define
\begin{equation}
T_n^{\ast}=\frac{\sqrt{n}}{m s_1^{\ast}}U_n,\qquad
s_1^{\ast
2}=\frac{(n-1)}{(n-m)^2}\sum
_{i=1}^nq_i^2,
\end{equation}
then by the definition of $T_n$,
\begin{eqnarray*}
T_n = T_n^{\ast} \Big/ \biggl(1-\frac{m^2(n-1)}{(n-m)^2}
T_n^{\ast
2} \biggr)^{1/2}, 
\end{eqnarray*}
such that for any $x \geq0$,
\begin{eqnarray}
\{ T_n \geq x \} = \bigl\{ T_n^{\ast} \geq x/
\bigl( 1+x^2m^2(n-1)/(n-m)^2
\bigr)^{1/2} \bigr\}. \label{tt}
\end{eqnarray}
Therefore, we only need to focus on $T_n^{\ast}$, instead of $T_n$.

To reformulate $T_n^*=\sqrt{n}U_n/(m s_1^*)$ in the form of (\ref
{stu-stat}), set
\begin{equation}
W_n = \sum_{i=1}^n
\xi_i,\qquad V_n^2= \sum
_{i=1}^n\xi_i^2,
\label{WVxi}
\end{equation}
where $\xi_i = n^{-1/2} h_{1 i}$ for $1\leq i \leq n$. Moreover, put
\begin{equation}
r(x_1, \ldots, x_m) = h(x_1, \ldots,
x_m) - \sum_{i=1}^m
h_1(x_i). \label{rm}
\end{equation}
For $U_n$, using Hoeffding's decomposition gives $\sqrt{n} U_n /m =
W_n + D_{1n}$, where
\begin{equation}
D_{1n} = { \sqrt{n} \over m {n \choose m}} \sum_{1 \leq i_1 < i_2<
\cdots< i_m \leq n}
r(X_{i_1}, \ldots, X_{i_m}). \label{D1}
\end{equation}
However, a direct calculation shows that $s_1^2 = V_n^2 (1+D_{2n}) $, where
%
\begin{eqnarray}
(n-1) D_{2n} & =& 1+ V_n^{-2} \Biggl\{
\frac{1}{ {n-2 \choose
m-1}^2}\La_n^2 + \frac{(m-1)\{ (m+1)n- 2 m\} n }{(n-m)^2 }W_n^2
\nonumber\\[-8pt]\label{d2-01} \\[-8pt]\nonumber
&&{}+ \frac{2\sqrt{n}}{ {n-2 \choose m-1}
} \sum
_{i=1}^n\xi_i \psi_i +
\frac{2m(m-1)n}{(n-m)^2} W_n D_{1n} \Biggr\},
\\
\La_n^2 & =& \sum_{i=1}^n
\psi_i^2,\qquad\psi_i = \mathop{\sum
_{ 1\leq\ell_1< \cdots< \ell_{m-1}\leq n}}_{\ell
_j \neq i, j=1, \ldots, m-1 } r(X_i, X_{\ell_1}, \ldots,
X_{\ell_{m-1}}). \label{PsiGa}
\end{eqnarray}
In particular, (\ref{d2-01}) generalizes (2.5) in \cite
{LaiShaoWang2011} for $m=2$. Combining the above decompositions of
$U_n$ and $s_1^2$, we obtain
\begin{eqnarray}
T_n^{\ast}=\frac{W_n+D_{1n}}{ V_n(1+D_{2n})^{1/2}}. \label{stu-u2}
\end{eqnarray}

To prove (\ref{t11a}), by (\ref{tt}), it is sufficient to show that there
exists a constant $C >1$ independent of $n$ such that
\begin{equation}
\qquad\P\bigl(T_n^{\ast}\geq x \bigr) \leq\bigl\{1-\Phi(x)
\bigr\} e^{C L_{n,1+x}} \biggl\{ 1+ C ( \sqrt{ a_m} +
\sigma_h )\frac{(1+x)^3}{\sqrt{n}} \biggr\} \label{cmd-ubd-u}
\end{equation}
and
\begin{equation}
\qquad\P\bigl(T_n^{\ast}\geq x \bigr) \geq\bigl\{1-\Phi(x)
\bigr\} e^{-C L_{n, 1+ x}} \biggl\{ 1- C ( \sqrt{a_m } +
\sigma_h ) \frac{(1+x)^3}{n^{1/2}} \biggr\} \label{cmd-lbd-u}
\end{equation}
hold uniformly for
\begin{equation}
0\leq x \leq C^{-1} \min\bigl\{ (\sigma/\sigma_p)
n^{1/2-1/p}, (n/a_m)^{1/6} \bigr\} , \label{hypo1}
\end{equation}
where $L_{n,x} = n \e\xi_{1,x}^2 I( \llvert \xi_{1,x}\rrvert >1 ) + n
\e\llvert \xi
_{1,x}\rrvert ^3 I(\llvert \xi_{1,x}\rrvert \leq1 )$ with $\xi
_{i,x}=x\xi_i$ for $x\geq1$.

The main strategy of proving (\ref{cmd-ubd-u}) and (\ref{cmd-lbd-u})
is to
first partition the probability space into two parts, say $\mathcal
{G}_{n,x}$ and its complement $\mathcal{G}_{n,x}^c$ such that $\P
(\mathcal{G}_{n,x}^c)$ is sufficiently small, then find a tight upper
bound for the tail probability of $\llvert D_{2n} \rrvert $ on
$\mathcal{G}_{n,x}$,
and finally apply Theorem~\ref{t21}.

First, by Lemma 3.3 of \cite{LaiShaoWang2011}, $\P( V_n^2 \leq
\sigma^2/2 ) \leq\exp\{ -n/(32 a^2) \}$ for all $n\geq1$, where $a
>0$ is such that $\e h_{1 i}^2 I( \llvert h_{1 i}\rrvert \geq a \sigma
) \leq
\sigma^2/4$. In particular, we take
\[
a =4^{1/(p-2)}(\sigma_p/\sigma)^{p/(p-2)} \leq(2
\sigma_p/\sigma)^{p/(p-2)}. %
\]
Then it follows from the inequality that $\sup_{2<p\leq3} \sup_{s\geq
0}( s^{ p/2-1} e^{-s} ) \leq1$ and (\ref{gaussiantails})
that (recall that $\sigma^2=1$)
\begin{equation}
\P\bigl( V_n^2 \leq1/2 \bigr) \leq C_1
\bigl\{1-\Phi(x)\bigr\} (\sigma_p/\sigma)^{p} (1+x)
n^{1-p/2 } \label{pbest1}
\end{equation}
for all $0\leq x \leq c_1 (\sigma/\sigma_1) n^{ p/2 -1 }$. We can
therefore regard $\{ V_n^2 \}_{n\geq1}$ as a sequence of positive
random variables that are uniformly bounded away from zero. For
$W_n/V_n$, applying Lemma 6.4 in \cite{JingShaoWang2003} implies that
for every $t>0$,
\begin{equation}
\P\bigl\{ \llvert W_n\rrvert\geq t (4 + V_n) \bigr\}
\leq4\exp\bigl(-t^2/2\bigr) . \label{pbest2}
\end{equation}
In view of (\ref{pbest1}) and (\ref{pbest2}), define the subset
\begin{equation}
\mathcal{G}_{n,x}=\bigl\{ \llvert W_n\rrvert\leq\sqrt{x}
n^{1/4}(4+V_n), V_n^2 \geq1/2
\bigr\}, \label{setGnx}
\end{equation}
such that
\begin{equation}
\P\bigl( \mathcal{G}_{n,x}^c \bigr) \leq C_2
\bigl\{1-\Phi(x)\bigr\} ( \sigma_p/\sigma)^{p} (1+x )
n^{1-p/2} \label{pbsetest}
\end{equation}
holds uniformly for
\begin{equation}
0 \leq x \leq c_2 \min\bigl\{ (\sigma/\sigma_1)
n^{ p/2- 1} , \sqrt{n} \bigr\} . \label{xcond1}
\end{equation}

Next, we restrict our attention to the subset $\mathcal{G}_{n,x}$.
Recall the definition of $D_{2n}$ in (\ref{d2-01}). For any
$\varepsilon
>0$, we have
\begin{equation}
\Biggl\llvert\sum_{i=1}^n
\xi_i \psi_i \Biggr\rrvert\leq( 4
\varepsilon)^{-1} V_n^2 + \varepsilon
\La_n^2 . \label{roughbd}
\end{equation}
In particular, taking $\varepsilon=\sigma/ ( x n^{m-1}\sigma_h )$
for $\sigma_h^2 $ as in (\ref{roughbd}) yields
%
\begin{eqnarray}\label{de-ubd}
\llvert D_{2n} \rrvert& \leq& C_3 \bigl\{
\sigma_h x n^{-1/2 } + (\sigma_h
x)^{-1} n^{3/2-2m} V_n^{-2}
\La_n^2
\nonumber\\[-8pt]\\[-8pt]\nonumber
&&{} + n^{-1} (W_n/V_n)^2
+ n^{-1} V_n^{-2} \llvert W_n \rrvert
\llvert D_{1n} \rrvert\bigr\}.
\end{eqnarray}
In addition to the subset $\mathcal{G}_{n,x}$ given in (\ref
{setGnx}), put
\begin{equation}
\mathcal{E}_{n,x} = \mathcal{G}_{n,x} \cap\bigl\{ \llvert
D_{1n} \rrvert/V_n \leq1/ 4x \bigr\}. \label{setSnx}
\end{equation}
Together, (\ref{de-ubd}) and (\ref{setSnx}) imply that
\begin{equation}
\llvert D_{2n} \rrvert\leq C_4 \bigl\{
\sigma_h x n^{-1/2} + (\sigma_h
x)^{-1} n^{3/2-2m} \La_n^2 \bigr\} :=
D_{3n} \label{D3}
\end{equation}
holds on $\mathcal E_{n,x}$ for all $1\leq x \leq\sqrt{n}$.

\begin{pf*}{Proof of (\ref{cmd-ubd-u})}
By (\ref{cmd-ubd}), Remark~\ref
{r20}, (\ref{stu-u2}), (\ref{de-ubd}) and condition (\ref
{xcond1}), we have
%
\begin{eqnarray} \label{ubddec}
\P\bigl(T_n^{\ast} \geq x \bigr) & \leq&\bigl\{ 1-\Phi(x)
\bigr\} e^{C_5L_{n,x}} (1+C_6 R_{n,x} )
\nonumber\\[-8pt]\\[-8pt]\nonumber
&&{} + \P\bigl(\llvert D_{1n} \rrvert/V_n \geq1/ 4x,
\mathcal{G}_{n,x} \bigr) + \P\bigl(\llvert D_{2n} \rrvert
\geq1/ 4x^2 , \mathcal{E}_{n,x} \bigr) + \P\bigl(\mathcal
{G}_{n,x}^c \bigr)
\end{eqnarray}
for all $x\geq1$ satisfying (\ref{xcond1}) and
\begin{equation}
L_{n,x} \leq c_3 x^2 , \label{u-rc}
\end{equation}
where $R_{n,x}$ is given in (\ref{rn1}) but with $D_{2n}$ replaced by
$D_{3n}$. In particular, for $2<p\leq3$, we have $L_{n,x} \leq(\sigma
_p/\sigma)^p x^p n^{1-p/2}$, and thus the constraint (\ref{u-rc}) is
satisfied whenever
\begin{equation}
1\leq x \leq\bigl(c_3^{1/p}/2\bigr) ( \sigma/
\sigma_p )^{1/p} n^{1/2-1/p}. \label{xcond2}
\end{equation}

However, for $0\leq x\leq1$, it follows from (\ref{sc1}) that
\begin{eqnarray*}
\P\bigl( T_n^{\ast} \geq x \bigr) \leq\P\bigl(
\mathcal{G}_{n,x}^c \bigr) + \bigl\{1-\Phi(x) \bigr\} (1
+C_7 \breve{R}_{n,x} ),
\end{eqnarray*}
for $\breve{R}_{n,x}$ as in (\ref{rn2}) with $D_{2n}$ replaced with $D_{3n}$.

In view of (\ref{pbsetest}) and (\ref{ubddec}), (\ref{cmd-ubd-u}) follows
directly from the following two propositions.
\end{pf*}

%
\begin{proposition} \label{prop1}
Under condition (\ref{k-c}), there exists a positive constant $C$
independent of $n$ such that
\begin{eqnarray}\label{tr-pr}
&& \P\bigl( \llvert D_{1n} \rrvert/V_n \geq1/4x ,
\mathcal{G}_{n,x} \bigr) +\P\bigl( \llvert D_{2n} \rrvert\geq1/4x^2 , \mathcal{E}_{n,x} \bigr)
\nonumber\\[-8pt]\\[-8pt]\nonumber
&&\quad \leq C
\sqrt{a_m} \bigl\{1-\Phi(x)\bigr\} x^2 n^{-1/2} ,
\end{eqnarray}
holds for all $x\geq1$ satisfying (\ref{hypo1}), where $a_m = \max\{c_0
\tau, c_0+m\}$, $\mathcal{G}_{n,x}$ and $\mathcal{E}_{n,x}$ are
given in (\ref{setGnx}) and (\ref{setSnx}), respectively.
\end{proposition}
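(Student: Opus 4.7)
\medskip
\noindent\textit{Proof proposal.}
The plan is to reduce both probabilities to tail estimates for the quadratic functional $\La_n^2=\sum_{i=1}^n\psi_i^2$ from \eqref{Psi.Ga}, and then to control those tails by exploiting the first-order degeneracy $\e\{r(X_1,\ldots,X_m)\mid X_1\}=0$ together with the pointwise bound \eqref{k-c'}.

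For the first probability, $V_n\geq 1/\sqrt{2}$ on $\mathcal{G}_{n,x}$, so it suffices to bound $\P(|D_{1n}|\geq 1/(4\sqrt{2}x))$. By the symmetry of $r$, the representation \eqref{D1} can be rewritten as $D_{1n}=\sqrt{n}/(m^2\binom{n}{m})\sum_{i=1}^n\psi_i$, and Cauchy--Schwarz gives $D_{1n}^2\leq C_m n^{2-2m}\La_n^2$, reducing this probability to $\P(\La_n^2\geq c_m n^{2m-2}/x^2)$. For the second probability, the deterministic bound $|D_{2n}|\leq D_{3n}$ on $\mathcal{E}_{n,x}$ from \eqref{D3} applies; condition \eqref{k-c'} yields $\sigma_h^2\leq C a_m m$, so the first summand $C_4\sigma_h x/\sqrt{n}$ of $D_{3n}$ is automatically $\leq 1/(8x^2)$ throughout the range $1\leq x\leq c_1(n/a_m)^{1/6}$, provided the absolute constant $c_1$ is chosen small enough. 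Hence $\{|D_{2n}|\geq 1/(4x^2)\}\cap\mathcal{E}_{n,x}\subseteq\{\La_n^2\geq c'\sigma_h n^{2m-3/2}/x\}$, another tail problem for $\La_n^2$.

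It therefore remains to show $\P(\La_n^2\geq t)\leq C\sqrt{a_m}\{1-\Phi(x)\}x^2/\sqrt{n}$ at the two relevant thresholds. The key second-moment input is $\e r^2\leq C_m a_m$ coming from \eqref{k-c'}; combined with the first-order degeneracy and a routine combinatorial computation this gives $\e\psi_1^2\leq C_m n^{m-2}a_m$ and $\e\La_n^2\leq C_m n^{m-1}a_m$ by independence across $i$. To convert the $L^2$ bound into the Gaussian-like factor $(1-\Phi(x))$, I would bound higher moments $\e\La_n^{2k}$ via Rosenthal/Burkholder-type inequalities for sums of squares together with moment estimates for the partially-degenerate $U$-statistic $\psi_i$ obtained from the Hoeffding decomposition of $r(X_i,\cdot,\ldots,\cdot)$, and then apply Markov's inequality with $k$ of order $x^2$.

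The main obstacle is this last step. An $L^2$-Markov bound yields only polynomial decay in $x$ and falls short by an essentially exponential factor near the upper end of the range \eqref{hypo1}. Producing the Gaussian factor requires tight uniform-in-$k$ moment control on $\La_n^2$, which rests on sharp combinatorial moment inequalities for $U$-statistics whose kernel is only first-order degenerate and possibly unbounded. Condition \eqref{k-c} is essential throughout, since it replaces moments of the unknown kernel $r$ by constants involving $a_m$ and by moments of $\sum_{i=1}^n h_1^2(X_i)$, the latter controlled via the $p$-th moment assumption on $h_1$.
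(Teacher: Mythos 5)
There are two genuine gaps. First, your Cauchy--Schwarz reduction of the $D_{1n}$ term is irreparably lossy. Writing $D_{1n}=\frac{\sqrt{n}}{m^2\binom{n}{m}}\sum_{i=1}^n\psi_i$ and bounding $(\sum_i\psi_i)^2\leq n\Lambda_n^2$ wastes exactly the factor $\sqrt{n}$ the proposition needs: since $r$ is only first-order degenerate ($\e\{r\mid X_1\}=0$ but $r_2\not\equiv 0$ in general), one has $\e\psi_i^2\asymp n^{2m-3}$ and hence $\e\Lambda_n^2\asymp n^{2m-2}$ (times kernel second moments), so your target event $\{\Lambda_n^2\geq c_m n^{2m-2}/x^2\}$ has its threshold \emph{below} the mean of $\Lambda_n^2$ as soon as $x$ exceeds a constant, and its probability is close to $1$, not $O(\{1-\Phi(x)\}x^2 n^{-1/2})$. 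Equivalently: $\sum_i\psi_i$ has typical size $n^{m-1}$, while $\sqrt{n\Lambda_n^2}$ has typical size $n^{m-1/2}$, so after Cauchy--Schwarz your surrogate for $D_{1n}$ is of constant order rather than $O(n^{-1/2})$. This is precisely why the paper does \emph{not} route $D_{1n}$ through $\Lambda_n^2$, but proves the separate martingale tail inequality \eqref{tr-ineq-2} for $\sum_{i_1<\cdots<i_m}r(X_{i_1},\ldots,X_{i_m})$ itself, exploiting cancellation across the sum.

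Second, the step you yourself flag as the obstacle is not merely hard, it is unattainable in the form you state: an unconditional bound $\P(\Lambda_n^2\geq t)\leq C\sqrt{a_m}\{1-\Phi(x)\}x^2 n^{-1/2}$ at a \emph{deterministic} threshold $t$ cannot hold under the hypotheses, because \eqref{k-c} only transfers $r^2$ to $a_m\{1+\sum_j h_1^2(x_j)\}$ and $h_1$ is assumed to have just $p\leq 3$ moments, so $\Lambda_n^2$ inherits polynomial tails; likewise, Rosenthal-type bounds on $\e\Lambda_n^{2k}$ with $k\asymp x^2\asymp(n/a_m)^{1/3}$ require moments of $h_1$ of that order, which do not exist. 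The paper's Lemma~\ref{tr-lm} resolves this by making the bound \emph{self-normalized}: the thresholds carry the random factor $B_1+B_2V_n^2$ (resp.\ $(B_3+B_4V_n^2)^{1/2}$), and the proof uses the moment-free sub-Gaussian inequality for self-normalized martingales (Lemma~\ref{l0.0}, inequalities \eqref{lsw}--\eqref{lsw-3}) together with the new Lemma~\ref{sumineq} to aggregate conditional tail bounds, with an induction on $m$. The random normalization is then matched against $V_n^2\geq 1/2$ on $\mathcal{G}_{n,x}$ (via $V_n^2\geq(2B_1+B_2)^{-1}(B_1+B_2V_n^2)$), yielding $\P(|D_{2n}|>1/4x^2,\mathcal{E}_{n,x})\leq C\exp\{-c n/(a_m x^4)\}$ and $\P(|D_{1n}|/V_n>1/4x,\mathcal{G}_{n,x})\leq C\exp\{-c\sqrt{n}/(\sqrt{a_m}\,x)\}$, which convert to the stated Gaussian-factor bound via \eqref{gaussian.tails} on the range $x\leq c_8(n/a_m)^{1/6}$. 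Note in this light a smaller defect of the same flavor in your $D_{2n}$ branch: your threshold $c'\sigma_h n^{2m-3/2}/x$ contains no $V_n^2$, so there is nothing to absorb the factor $B_1+B_2V_n^2$, and $V_n^2$ is \emph{not} bounded above on $\mathcal{E}_{n,x}$; keeping the $V_n^{-2}$ from \eqref{de-ubd} in the threshold, as the paper does, is what makes the matching work. Your reductions that do survive comparison --- using $V_n^2\geq 1/2$, the bound $\sigma_h^2\leq 2a_m$ from \eqref{k-c'} to absorb the first summand of $D_{3n}$ for $x\lesssim(n/a_m)^{1/6}$, and the final exponential-to-Gaussian conversion --- all agree with the paper's proof.
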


%
\begin{proposition} \label{prop2}
There is a positive constant $C $ independent of $n$ such that
\begin{equation}
R_{n,x} \leq C\sigma_h x^3 n^{-1/2}
\label{rn1-ub}
\end{equation}
for all $x \geq1$ and
\begin{equation}
\breve{R}_{n,x} \leq C \sigma_hn^{-1/2}
\label{rn2-ub}
\end{equation}
for $0\leq x\leq1$, where $\sigma_h$ is given in (\ref{vardef}).
\end{proposition}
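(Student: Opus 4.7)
The plan exploits the first-order degeneracy of the Hoeffding remainder kernel $r(x_1,\ldots,x_m) = h(x_1,\ldots,x_m) - \sum_{i=1}^m h_1(x_i)$, which satisfies $\e[r\mid X_1]=0$; consequently, averaged quantities built from $r$ gain an extra $n^{-1/2}$ factor beyond what the triangle inequality alone predicts, and this is the source of the $\sqrt n$ in \eqref{rn1-ub}--\eqref{rn2-ub}.

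First I would choose the natural leave-one-out surrogates $D_{1n}^{(i)}$ and $D_{3n}^{(i)}$ by restricting the sums defining $D_{1n}$ and $\Lambda_n^2$ to index sets avoiding $i$; an elementary combinatorial identity gives $D_{1n}-D_{1n}^{(i)} = \sqrt n\,\psi_i/(m\binom{n}{m})$. Hoeffding's ANOVA decomposition together with the degeneracy $\e[r\mid X_1]=0$ then yields the two moment estimates
$$\e D_{1n}^2 \leq C_m\,\sigma_h^2/n, \qquad \e[\psi_i^2\mid X_i] \leq C_m\,n^{2m-3}\,\e[r^2\mid X_i],$$
in which the vanishing $\zeta_1$-term is what produces the extra $1/n$ compared with a non-degenerate $U$-statistic. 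Since the ANOVA identity $\sigma_h^2 = \sum_{k=1}^m\binom{m}{k}\sigma_k^2$ with $\sigma_1^2=1$ gives $\e r^2 = \sigma_h^2 - m \leq \sigma_h^2$, it follows that $\e\Lambda_n^2 \leq C_m n^{2m-2}\sigma_h^2$, and hence, via \eqref{D3}, $\e|D_{3n}| \leq C\sigma_h x/\sqrt n$ for $x\geq 1$.

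To prove \eqref{rn1-ub}, use Lemma \ref{In0} and the hypothesis $L_{n,x}\leq c_1 x^2$ implied by \eqref{hypo1} to conclude that both $I_{n,x}$ and $\prod_j\e e^{2Y_j}$ are comparable to absolute constants. Cauchy--Schwarz in the form
$$\e\bigl[|Z|e^{\sum_j Y_j}\bigr]\leq (\e Z^2)^{1/2}\Bigl(\prod_j\e e^{2Y_j}\Bigr)^{1/2}$$
then reduces the direct terms $x\e[|D_{1n}|e^{\sum Y_j}]$ and $x^2\e[|D_{3n}|e^{\sum Y_j}]$ to their unweighted $L^2$-bounds, giving contributions of order $\sigma_h x/\sqrt n$ and $\sigma_h x^3/\sqrt n$ respectively. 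The cross terms $\sum_i\e[\min(|\xi_{i,x}|,1)|D_{1n}-D_{1n}^{(i)}|e^{\sum_{j\neq i}Y_j}]$ are handled by conditioning on $X_i$, applying Cauchy--Schwarz to separate $|\psi_i|$ from the (now-independent) exponential factor over $j\neq i$, and summing via
$$\sum_{i=1}^n\sqrt{\delta_{i,x}}\leq \sqrt{n L_{n,x}}\leq c\,x\sqrt n;$$
the per-$i$ normalization $\sqrt n/\binom{n}{m}\cdot n^{m-3/2}\sigma_h = O(\sigma_h/n)$ then combines with the $x$-summed bound to yield an $O(\sigma_h x/\sqrt n)$ contribution, which is of the desired order after multiplication by the remaining $x^2$.

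The bound \eqref{rn2-ub} for $0\leq x\leq 1$ is analogous but simpler because no exponential weights are present; the $L^2$-estimates above together with the elementary moment-based estimate of $L_{n,1+x}$ following Theorem \ref{t2.3} produce the desired bound. The main technical obstacle is the $D_{3n}-D_{3n}^{(i)}$ term: it requires expanding
$$\Lambda_n^2-(\Lambda_n^{(i)})^2 = \sum_k \bigl(\psi_k+\psi_k^{(i)}\bigr)\bigl(\psi_k - \psi_k^{(i)}\bigr),$$
in which $\psi_k-\psi_k^{(i)}$ is itself a sum over $(m-1)$-subsets containing both $k$ and $i$. Controlling this difference---so that no spurious $a_m$-factor survives and the final scaling remains $\sigma_h x^3/\sqrt n$---requires careful combinatorial accounting coupled with the conditional moment estimates above, and this is where the bulk of the computational work sits.
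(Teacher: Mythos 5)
Your guiding heuristic --- exploit the first-order degeneracy $\e\{r(X_1,\ldots,X_m)\,|\,X_1\}=0$ to gain a factor $n^{-1/2}$ --- is indeed the engine of the paper's proof, and your unweighted estimates ($\e D_{1n}^2\lesssim\sigma_h^2/n$, $\e(\psi_i^2|X_i)\lesssim n^{2m-3}\e(r_{1,\ldots,m}^2|X_i)$, $\e|D_{3n}|\lesssim\sigma_h x n^{-1/2}$) are correct and essentially suffice for the unweighted case \eqref{rn2-ub}. But there is a fatal gap in your treatment of \eqref{rn1-ub}: the claim that $\prod_j\e e^{2Y_j}$ is comparable to an absolute constant is false. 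The tilt $e^{Y_j}=e^{\xi_{j,x}-\xi_{j,x}^2/2}$ is balanced precisely so that Lemma~\ref{l0} with $(\lambda,\theta)=(1,1/2)$, i.e.\ $\lambda^2/2-\theta=0$, gives $\e e^{Y_j}=1+O(1)\de_{j,x}$ and hence $I_{n,x}=e^{O(L_{n,x})}$; doubling the exponent destroys the balance, since $2Y_j=2\xi_{j,x}-\xi_{j,x}^2$ corresponds to $(\lambda,\theta)=(2,1)$ in \eqref{f-1}, whence $\e e^{2Y_j}=1+x^2\e\xi_j^2+O(1)\de_{j,x}$ and $\prod_j\e e^{2Y_j}$ is of order $e^{x^2}$ up to $e^{O(L_{n,x})}$. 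Since $R_{n,x}$ carries the prefactor $I_{n,x}^{-1}$, your Cauchy--Schwarz split would need $\prod_j\e e^{2Y_j}\lesssim I_{n,x}^2$, and it misses by exactly $e^{x^2}$; your route thus yields $R_{n,x}\lesssim\sigma_h x^3 n^{-1/2}e^{x^2/2}$, which is useless throughout the moderate-deviation range $1\leq x\lesssim n^{1/2-1/p}$ of \eqref{hypo1}. The cross terms suffer the same defect plus an independence error: after conditioning on $X_i$, both $\psi_i$ and $e^{\sum_{j\neq i}Y_j}$ are functions of $\{X_j\}_{j\neq i}$, so they are \emph{not} independent and cannot be ``separated''; the only harmless Cauchy--Schwarz keeps the weight with the square, $\e(|\psi_i|e^{\sum_{j\neq i}Y_j}|X_i)\leq\{\e(\psi_i^2e^{\sum_{j\neq i}Y_j}|X_i)\}^{1/2}\{\e e^{\sum_{j\neq i}Y_j}\}^{1/2}$ as in the paper's step \eqref{rn1-1}, after which you face \emph{tilted} second moments that your unweighted bounds do not control.

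What is missing is precisely the content of the paper's Lemma~\ref{l1}. Under the conjugate measure the kernel $r$ is no longer degenerate, so in $\e\{(\sum r_{i_1,\ldots,i_m})^2e^{\sum_j Y_j}\}$ the $\binom{n}{m}^2$ pairs with disjoint index sets no longer cancel for free; one must show that the tilted means remain small. The paper does this by inclusion--exclusion over which factors $e^{Y_i}-1$ survive, using the degeneracy together with $|e^{t-t^2/2}-1|\leq2|t|$, obtaining $|L_m|\leq C\sigma_h x^2n^{-1}$ and $|\tilde L_m|\leq C\{\e(r_{1,\ldots,m}^2|X_1)\}^{1/2}xn^{-1/2}$. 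The \emph{quadratic} smallness of $L_m$ is essential: with only the single-cancellation bound $O(\sigma_h xn^{-1/2})$, the disjoint-pair block would be of order $I_{n,x}\sigma_h^2x^2n^{2m-1}$ rather than $I_{n,x}\sigma_h^2x^4n^{2m-2}$, leaving a contribution of order $\sigma_h x^2$ rather than $\sigma_h x^3n^{-1/2}$ to $R_{n,x}$. These tilted-mean bounds, the decomposition of the square by overlap size, and the conditional analogue \eqref{pre3} are what actually produce \eqref{rn1-ub}; your proposal identifies the right cancellation in the untilted world but supplies no valid device for transporting it through the exponential weighting, and the device you chose provably fails.
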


\begin{pf*}{Proof of (\ref{cmd-lbd-u})}
Observe that
\begin{eqnarray*}
\P\bigl(T_n^{\ast} \geq x\bigr) &\geq& \P\bigl\{
W_n+D_{1n} \geq xV_n(1+D_{2n})^{1/2},
\mathcal{G}_{n,x} \bigr\}
\\
&\geq& \P\bigl\{ W_n+D_{1n} \geq x V_n(1+D_{3n})^{1/2}
\bigr\} - \P\bigl(\mathcal{G}_{n,x}^c \bigr).
\end{eqnarray*}
Then (\ref{cmd-lbd-u}) follows from (\ref{cmd-lbd}), Remark~\ref{r20},
(\ref{pbsetest}) and Proposition~\ref{prop2}. Finally, assembling
(\ref{xcond1}) and (\ref{xcond2}) yields (\ref{hypo1}) and
completes the proof of Theorem~\ref{t11}.
\end{pf*}

\subsection{Proof of Propositions \texorpdfstring{\protect\ref{prop1}}{6.1} 
and \texorpdfstring{\protect
\ref{prop2}}{6.2}}
\label{pfpropsec}
We begin with a technical lemma, the proof of which is presented in the
\hyperref[appe]{Appendix}.

%
\begin{lemma} \label{tr-lm}
There exist an absolute constant $C$ and constants $B_1$--$B_4$
independent of $n$, such that for all $y\geq0$,
\begin{eqnarray}
&& \P\bigl\{ \Lambda_n^2 \geq a_m y \bigl(
B_1 + B_2 V_n^2 \bigr)
n^{2m-2} \bigr\} \leq C e^{-y/4} \label{tr-ineq-1}
\end{eqnarray}
and
\begin{eqnarray}
&& \P\biggl\{ \frac{ \llvert \sum_{1\leq i_1<\cdots<i_m \leq n}
r(X_{i_1},\ldots,X_{i_m})\rrvert } {\sqrt{a_m} ( B_3 + B_4 V_n^2)^{1/2}
n^{m-1} } \geq y \biggr\} \leq C e^{-y/4},
\label{tr-ineq-2}
\end{eqnarray}
where $a_m= \max\{c_0 \tau, c_0+m\}$, and $V_n^2$ and $\Lambda_n^2$
are given in (\ref{WVxi}) and (\ref{PsiGa}), respectively.
\end{lemma}

The above lemma generalizes and improves Lemma~3.4 of \cite
{LaiShaoWang2011} where $m=2$ and the bound was of the order $ n
e^{-y/8}$ instead of $e^{-y/4}$. Lemma~\ref{sumineq} in the \hyperref[appe]{Appendix}
makes it possible to eliminate the factor $n$.

\begin{pf*}{Proof of Proposition~\ref{prop1}}
By (\ref{de-ubd})
and the definition of $\mathcal{E}_{n,x}$ in (\ref{setSnx}), we get
\begin{eqnarray*}
\P\bigl( \llvert D_{2n} \rrvert\geq1/ 4x^2 ,
\mathcal{E}_{n,x} \bigr) \leq\P\bigl( \La_n^2
\geq c_4 V_n^2 x^{-4}
n^{2m-1} , \mathcal{G}_{n,x} \bigr),
\end{eqnarray*}
provided that $1\leq x\leq c_5 n^{1/4}$. Because $V_n^2 \geq1/2$ on
$\mathcal{G}_{n,x}$, it is easy to see that
\[
V_n^2 \geq(2 B_1 + B_2)^{-1}
\bigl(B_1 + B_2V_n^2\bigr)
\]
for $B_1$ and $B_2$ as in Lemma~\ref{tr-lm}. Therefore, taking
\[
y = \frac{c_4}{ 2 B_1+ B_2 } \cdot\frac{n}{a_m x^4} %
\]
in (\ref{tr-ineq-1}) leads to
\begin{equation}
\P\bigl( \llvert D_{2n} \rrvert> 1/ 4x^2 ,
\mathcal{E}_{n,x} \bigr) \leq C\exp\bigl\{ - c_6 n/ \bigl(
a_m x^4\bigr) \bigr\}. \label{pbest3}
\end{equation}

Using (\ref{tr-ineq-2}), it can be similarly shown that
\begin{equation}
\P\bigl( \llvert D_{1n} \rrvert/V_n > 1/4x ,
\mathcal{G}_{n,x} \bigr) \leq C\exp\bigl\{ - c_7
n^{1/2} / \bigl( a_m^{1/2} x \bigr) \bigr\}.
\label{pbest4}
\end{equation}
Together, (\ref{pbest3}), (\ref{pbest4}) and (\ref{gaussiantails})
imply (\ref{tr-pr}) as long as
\begin{equation}
1\leq x \leq c_8 (n / a_m )^{1/6}.
\label{xcond3}
\end{equation}\upqed\vspace*{12pt}
\end{pf*}

\begin{pf*}{Proof of Proposition~\ref{prop2}}
For $x\geq0$ and
$1\leq i\leq n$, put $Y_i = x \xi_i-x^2\xi_i^2/2$, and let
\[
L_k:= \e\bigl( r_{1,\ldots,k} e^{Y_1+ \cdots+Y_k} \bigr),\qquad
\tilde{L}_k := \e\bigl( r_{1,\ldots,k} e^{Y_2+ \cdots+Y_k} \mid
X_1 \bigr) %
\]
for $2\leq k\leq m$, where $r_{1,\ldots,k} := \e\{ r(X_1,\ldots
,X_m)\mid X_1,\ldots, X_k \}$ for $ r(X_1,\ldots,X_m)$ as in (\ref{rm}).
In particular, put $r_{1, \ldots, m}:= r(X_1, \ldots, X_m)$ and note
that $\e r_{1, \ldots, m}^2 \leq\sigma_h^2$. The following lemma
provides the upper bounds for $L_m$ and $\tilde L_m$.

%
\begin{lemma} \label{l1}
For any $0\leq x\leq\sqrt{n}/2$, we have
%
\begin{eqnarray}
\llvert L_m\rrvert& \leq& C \sigma_h x^2
n^{-1} , \label{l1-1}
\\
\llvert\tilde{L}_m\rrvert& \leq& C \bigl\{ E \bigl(r_{1, \ldots, m}^2
\mid X_1\bigr) \bigr\}^{1/2} x n^{-1/2} .
\label{l1-2}
\end{eqnarray}
\end{lemma}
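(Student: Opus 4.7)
The plan is to expand the exponentials as products of i.i.d.\ factors and exploit the Hoeffding-type centering of $r_{1,\ldots,m}$ to annihilate the low-order terms in the expansion. Since each $Y_i = x\xi_i - x^2\xi_i^2/2$ is a function of $X_i$ alone, the $e^{Y_i}$ are i.i.d. Let $m_\star := \e e^{Y_1}$ and $\phi_i := e^{Y_i} - m_\star$. Then the $\phi_i$ are i.i.d.\ mean-zero random variables, each a function of $X_i$ alone, and
\[
\prod_{i=1}^m e^{Y_i} \;=\; \prod_{i=1}^m (m_\star + \phi_i) \;=\; \sum_{S \subseteq \{1,\ldots,m\}} m_\star^{m-|S|}\prod_{i\in S}\phi_i,
\]
with the analogous identity for $\prod_{i=2}^m e^{Y_i}$ indexed by $S\subseteq\{2,\ldots,m\}$.

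Two moment estimates drive everything. First, applying Lemma~\ref{l0} with $(\lambda,\theta)=(2,1)$ to $\e e^{2Y_1}$ and $(\lambda,\theta)=(1,1/2)$ to $\e e^{Y_1}$ yields $\e e^{2Y_1} = 1 + x^2\e\xi_1^2 + O(\delta_{1,x})$ and $\e e^{Y_1} = 1 + O(\delta_{1,x})$, so $\|\phi_1\|_2^2 = \var(e^{Y_1}) = x^2/n + O(\delta_{1,x})$. Under the $U$-statistic scaling $\xi_1 = h_{1,1}/\sqrt n$, both pieces of $\delta_{1,x}$ are bounded by $x^2\e\xi_1^2 = x^2/n$, so $\|\phi_1\|_2 \leq C x/\sqrt n$. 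Moreover, Lemma~\ref{l2} together with the same bound on $\delta_{1,x}$ gives $m_\star$ bounded above and below by absolute positive constants for $x\leq \sqrt n/2$. The cancellation mechanism is the usual Hoeffding centering: $\e r(X_1,\ldots,X_m) = 0$ since $\e h = m\,\e h_1 = 0$, and by symmetry $\e\{r(X_1,\ldots,X_m)\mid X_j\} = h_1(X_j) - h_1(X_j) = 0$ for every $j$.

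For $L_m$, inserting the expansion into the definition and using these cancellations kills the $|S|\leq 1$ terms, leaving
\[
L_m \;=\; \sum_{|S|\geq 2} m_\star^{m-|S|}\,\e\bigl(r_{1,\ldots,m}\textstyle\prod_{i\in S}\phi_i\bigr).
\]
Cauchy--Schwarz and independence of the $\phi_i$ bound the inner expectation by $\|r_{1,\ldots,m}\|_2\prod_{i\in S}\|\phi_i\|_2 \leq \sigma_h(Cx/\sqrt n)^{|S|}$, and since $m$ is fixed and $x/\sqrt n\leq 1/2$, the $|S|=2$ term dominates via a geometric-series estimate, giving $|L_m|\leq C\sigma_h x^2/n$. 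For $\tilde L_m$ the same expansion, now with $S\subseteq\{2,\ldots,m\}$ and everything conditioned on $X_1$, handles the $S=\emptyset$ term via $\e(r_{1,\ldots,m}\mid X_1)=0$ and bounds the $|S|\geq 1$ terms by conditional Cauchy--Schwarz: since each $\phi_i$ for $i\geq 2$ is independent of $X_1$, one has $\e\bigl(\prod_{i\in S}\phi_i^2\mid X_1\bigr) = \prod_{i\in S}\e\phi_i^2 \leq (Cx^2/n)^{|S|}$, so the $|S|=1$ term dominates and the claim $|\tilde L_m|\leq C\{\e(r_{1,\ldots,m}^2\mid X_1)\}^{1/2}x/\sqrt n$ follows.

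The main obstacle I anticipate is the first moment estimate: one needs $\|\phi_i\|_2 = O(x/\sqrt n)$ rather than the crude $O(1)$, and this requires careful use of Lemma~\ref{l0} together with the observation that $\delta_{1,x}$ is itself $O(x^2/n)$ under the $U$-statistic normalization; without this sharp bound the expansion yields only $|L_m|=O(\sigma_h)$, which is useless. Once this scaling is in hand, the cancellation of the low-order $S$-terms via Hoeffding centering and the geometric-series estimate on the tail of the expansion are routine.
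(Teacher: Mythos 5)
Your proof is correct, but it takes a genuinely different route from the paper's. The paper centers each exponential factor at $1$ rather than at its mean: expanding over which factors are replaced by $e^{Y_i}-1$, it writes $L_m$ as $\e\{r_{1,\ldots,m}(e^{Y_1}-1)\cdots(e^{Y_m}-1)\}$ plus a signed binomial combination of the lower-order projections $L_2,\ldots,L_{m-1}$ (using $\e( r_{1,\ldots,m}\,e^{Y_{i_1}+\cdots+Y_{i_k}})=L_k$ via the tower property), bounds the fully centered term by the deterministic inequality $|e^{t-t^2/2}-1|\leq 2|t|$ together with Cauchy--Schwarz and independence of the $h_{1i}$, and then closes by induction on $m$ starting from the base case $m=2$. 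Because $\e(e^{Y_i}-1)\neq 0$, the paper cannot kill the intermediate terms by centering alone and is forced into the induction; your centering at $m_\star=\e e^{Y_1}$ makes $\e\phi_i=0$, so together with the first-order degeneracy $\e(r\mid X_j)=0$ all $|S|\leq 1$ terms vanish at one stroke and no induction is needed---a cleaner, non-recursive argument. The price is the sharp variance estimate $\var(e^{Y_1})=O(x^2/n)$, which you obtain legitimately from Lemma~\ref{l0} with $(\lambda,\theta)=(2,1)$ and $(1,1/2)$, noting correctly that $\de_{1,x}\leq 2x^2\e\xi_1^2=2x^2/n$ under the $U$-statistic scaling; your anticipated ``main obstacle'' is in fact lighter than you feared, since the paper's own pointwise inequality yields it in one line, $\var(e^{Y_1})\leq\e(e^{Y_1}-1)^2\leq 4x^2\e\xi_1^2$, so you could have dispensed with Lemma~\ref{l0} entirely. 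Both arguments rest on the same two structural inputs---a multiplicative expansion of the exponential and Hoeffding degeneracy of $r$---and both use $x\leq\sqrt{n}/2$ only to dominate the higher-order terms (your geometric-series step, the paper's $(2x)^m n^{-m/2}$ term); what yours buys is brevity and a single unified treatment of $L_m$ and $\tilde L_m$, while the paper's buys explicit small constants and reliance on nothing beyond an elementary pointwise bound.
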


We postpone the proof of Lemma~\ref{l1} to the end of this section.
Recall the definition of $D_{1n}$ in~(\ref{D1}). Using H\"older's
inequality, we estimate
\[
\e\Bigl\{ \Bigl( \sum r_{i_1,\ldots,i_m} \Bigr)^2
e^{\sum_{j=1}^n
Y_j} \Bigr\} = \sum\sum\e\bigl( r_{i_1,\ldots,i_m}
r_{j_1,\ldots,j_m} e^{\sum_{j=1}^n Y_j} \bigr).
\]
Put
\begin{eqnarray}
\mc&=& \bigl\{ (i_1, j_1, \ldots, i_m,
j_m) : 1\leq i_1 \leq  \cdots \leq i_m \leq
n, 1\leq
j_1 < \cdot< j_m \leq n \bigr\}
\nonumber
\\
&=& \bigcup_{k=0}^m \bigl\{
(i_1, j_1, \ldots, i_m, j_m)
\in\mc: \bigl\llvert\{i_1, \ldots, i_m\} \cap
\{j_1, \ldots, j_m \} \bigr\rrvert= k \bigr\} :=
\bigcup_{k=0}^m \mc_k.
\nonumber
\end{eqnarray}
By (\ref{mgf-est}),
\begin{eqnarray*}
&& \e\Bigl\{ \Bigl( \sum r_{i_1,\ldots,i_m} \Bigr)^2
e^{\sum_{j=1}^n
Y_j} \Bigr\}
\\
&&\quad = \sum_{k=0}^m \sum
_{(i_1, j_1, \ldots, i_m, j_m) \in\mc_k} \e\bigl( r_{i_1,\ldots,i_m}
r_{j_1,\ldots,j_m}
e^{\sum_{j=1}^n Y_j} \bigr)
\\
&&\quad = \sum_{k=0}^m \pmatrix{n \cr m} \pmatrix{n-k
\cr m-k} \e\bigl( r_{1,\ldots,m} r_{1,\ldots,k,m+1,\ldots,2m-k} e^{
\sum
_{j=1}^{2m-k} Y_j} \bigr)
\cdot\bigl( \e e^{Y_1} \bigr)^{n-2m+k}
\\
&&\quad = \pmatrix{n \cr m}^2 \bigl(\e e^{Y_1}\bigr)^{-2m}
I_{n,x} L_m^2 + \pmatrix{n \cr m} \pmatrix{n-1 \cr m-1}
\bigl(\e e^{Y_1}\bigr)^{1-2m} I_{n,x} \e\bigl(
\tilde{L}_m^2 e^{Y_1} \bigr)
\\
&&\qquad{} + \sum_{k=2}^m \pmatrix{n \cr m} \pmatrix{n-k
\cr m-k} \bigl( \e e^{Y_1} \bigr)^{k-2m} I_{n,x}
\e\bigl( r_{1,\ldots,m} r_{1,\ldots,k,m+1,\ldots,2m-k} e^{ \sum
_{j=1}^{2m-k} Y_j } \bigr)
\\
&&\quad \leq C I_{n,x} n^{2m} \bigl( L_m^2
+ n^{-1} \e\tilde{L}_m^2 + \sigma
_h^2 n^{-2} \bigr),
\end{eqnarray*}
which together with Lemma~\ref{l1} yields for $x \geq1$,
\begin{eqnarray*}
\e\Bigl\{ \Bigl( \sum r_{i_1,\ldots,i_m} \Bigr)^2
e^{\sum_{j=1}^n
Y_j} \Bigr\} \leq C \sigma_h^2
I_{n,x} x^4 n^{2m-2} .
\end{eqnarray*}
This, together with (\ref{D1}) gives
\begin{equation}
\e\bigl( \llvert D_{1n} \rrvert e^{\sum_{j=1}^n Y_j } \bigr) \leq C
\sigma_h I_{n,x} x^2 n^{-1/2}.
\label{Dn1bd}
\end{equation}

Recall that $\psi_i= \sum_{1\leq\ell_1 \leq  \cdots\leq  \ell_{m-1} (\neq
i) \leq n} r(X_i, X_{\ell_1}, \ldots, X_{\ell_{m-1}})$. Then it can
be similarly derived that
\begin{equation}
\e\bigl( \psi_i^2 e^{\sum_{j=1}^n Y_j} \bigr) \leq C
\sigma_h^2 I_{n,x} x^2
n^{2m-3} . \label{pre2}
\end{equation}
Together with (\ref{D3}), this yields
\begin{equation}
\e\bigl( D_{3n} e^{\sum_{j=1}^n Y_j }\bigr) \leq C \sigma_h
I_{n,x} x n^{-1/2} . \label{rn1-0}
\end{equation}

Next, for each $1\leq i\leq n$, let $D_{1n}^{(i)}$ and $D_{3n}^{(i)}$
be obtained from $D_{1n}$ and $D_{3n}$, respectively, by throwing away
the summands that depend on $X_i$. Then, by (\ref{D1}) and (\ref
{D3}), we have
\begin{eqnarray*}
\bigl\llvert D_{1n} - D_{1n}^{(i)}\bigr\rrvert
\leq\frac{\sqrt{n} }{m {n \choose m}} \llvert\psi_i \rrvert
\end{eqnarray*}
and
\begin{eqnarray}
&& x\bigl\llvert D_{3n}- D_{3n}^{(i)}\bigr\rrvert
\nonumber
\\
&&\quad \leq C \sigma_h^{-1} n^{-2m+3/2} \biggl\{
\psi_i^2 + \sum_{j \neq i}
\biggl(\sum_{ 1 \leq j_1 <
\cdots< j_{m-2} (\neq i, j) \leq n } r_{i, j, j_1,\ldots, j_{m-2}}
\biggr)^2
\nonumber
\\
&&\qquad{} + 2 \sum_{j \neq i} \biggl\llvert\biggl( \sum
_{1\leq j_1 < \cdots
< j_{m-2} (\neq i, j) \leq n} r_{i, j, j_1, \ldots, j_{m-2}} \biggr)
\biggl( \sum
_{ 1\leq j_1 < \cdots< j_{m-1} (\neq j)\leq n } r_{j,
j_1,\ldots, j_{m-1}} \biggr) \biggr\rrvert\biggr
\} .
\nonumber
\end{eqnarray}

Using a conditional analogue of the argument that leads to (\ref
{pre2}) implies
\begin{equation}
\e\bigl( \psi_i^2 e^{\sum_{j\neq i} Y_j}\mid
X_i \bigr) \leq C I_{n,x} x^2
n^{2m-3} \times\e\bigl( r_{1,\ldots, m}^2\mid
X_i\bigr) , \label{pre3}
\end{equation}
as a consequence of which (recall that $\xi_{i,x}=x\xi_i$)
%
\begin{eqnarray} \label{rn1-1}
&& \sum_{i=1}^n\e\bigl\{ \min\bigl(
\llvert\xi_{i,x} \rrvert, 1\bigr) \bigl\llvert D_{1n}-D_{1n}^{(i)}
\bigr\rrvert e^{\sum_{j\neq i}^n Y_j} \bigr\}
\nonumber
\\
&&\quad \leq C n^{-m+1/2} \sum_{i=1}^n\e
\bigl[ \min\bigl(\llvert\xi_{i,x} \rrvert, 1\bigr) \bigl\{ \e\bigl(
\psi_i^2 e^{\sum_{j\neq i} Y_j} \mid X_i\bigr)
\bigr\}^{1/2} \bigl\{ \e\bigl( e^{\sum_{j\neq i} Y_j} \bigr) \bigr
\}^{1/2} \bigr]
\nonumber\\[-8pt]\\[-8pt]\nonumber
&&\quad \leq C I_{n,x} x^2 n^{-1} \sum
_{i=1}^n \bigl( \e\xi_i^2
\bigr)^{1/2} \bigl( E r_{1, \ldots, m}^2
\bigr)^{1/2}
\nonumber
\\
&&\quad \leq C \sigma_h I_{n,x} x^2
n^{-1/2} .\nonumber
\end{eqnarray}

For the contributions from $\llvert D_{3n}-D_{3n}^{(i)}\rrvert $, we have
\begin{eqnarray*}
\e\bigl\{ \min\bigl(\llvert\xi_{i,x} \rrvert, 1\bigr)
\psi_i^2 e^{\sum_{j \neq i} Y_j} \bigr\} & =& \e\bigl\{ \min
\bigl(\llvert\xi_{i,x} \rrvert, 1\bigr) \times\e\bigl(
\psi_i^2 e^{\sum_{j\neq
i}Y_j} \mid X_i \bigr)
\bigr\}
\\
& \leq& C I_{n,x} x^2 n^{2m-3} \times\e\bigl\{
\min\bigl(\llvert\xi_{i,x} \rrvert, 1\bigr) r_{1,\ldots,m}^2
\bigr\},
\end{eqnarray*}
and for each pair $(i,j)$ such that $1\leq i\neq j \leq n$,
\begin{eqnarray*}
&& \e\Bigl\{ \min\bigl(\llvert\xi_{i,x} \rrvert, 1\bigr) \Bigl\llvert
\Bigl( \sum\psi_{i , j, j_1,\ldots, j_{m-2}} \Bigr) \Bigl( \sum
\psi_{j, j_1,\ldots, j_{m-1}} \Bigr) \Bigr\rrvert e^{ \sum
_{k\neq i} Y_k }\Bigr\}
\\
&&\quad \leq\e\Bigl[ \min\bigl(\llvert\xi_{i,x} \rrvert, 1\bigr) \e\Bigl\{
\Bigl( \sum\psi_{i , j, j_1,\ldots, j_{m-2}} \Bigr)^2 e^{\sum_{k\neq i} Y_k}
\big| X_i \Bigr\}^{1/2}
\\
&&\qquad{} \times\e\Bigl\{ \Bigl(\sum
\psi_{j, j_1 , \ldots, j_{m-1}} \Bigr)^2 e^{\sum
_{k\neq i} Y_k} \Bigr\}^{1/2}
\Bigr]
\\
&&\quad \leq C I_{n,x} x^2 n^{2m-7/2} \times\e\llvert
\xi_i r_{1,\ldots,m}\rrvert\times\bigl( \e r^2_{1, \ldots, m}
\bigr)^{1/2}
\\
&&\quad \leq C \sigma_h^2 I_{n,x} x^2
n^{2m-4} ,
\end{eqnarray*}
where we used (\ref{pre2}) in the second step. Similarly, it can be
proved that
\begin{eqnarray*}
&& \e\Bigl\{ \min\bigl(\llvert\xi_{i,x} \rrvert, 1\bigr) \Bigl( \sum
r_{i , j, j_1, \ldots,
j_{m-2}} \Bigr)^2 e^{\sum_{k\neq i} Y_k} \Bigr\}
\\
&&\quad  = \e\Bigl[ \min\bigl(\llvert\xi_{i,x} \rrvert, 1\bigr) \e\Bigl\{
\Bigl( \sum r _{i, j,
j_1, \ldots, j_{m-2}} \Bigr)^2 e^{\sum_{k\neq i}Y_k}
\big| X_i \Bigr\} \Bigr] \leq C \sigma_h^2
I_{n,x} n^{2m-4} .
\end{eqnarray*}

Adding up the above calculations, we get
\begin{eqnarray*}
\sum_{i=1}^n\e\bigl\{ x \min\bigl(
\llvert\xi_{i,x} \rrvert, 1\bigr) \bigl\llvert D_{3n}-D_{3n}^{(i)}
\bigr\rrvert e^{\sum_{j\neq i} Y_j} \bigr\} \leq C \sigma_h
I_{n,x} x^2n^{-1/2} .
\end{eqnarray*}
This, together with (\ref{Dn1bd}), (\ref{rn1-0}) and (\ref{rn1-1})
implies (\ref{rn1-ub}).

Finally, we consider the case of $0\leq x\leq1$. By H\"older's inequality,
%
\begin{eqnarray}
\e\llvert D_{1n} \rrvert& \leq& C n^{1/2} \pmatrix{n \cr
m}^{-1} \Bigl\{ \e\Bigl(\sum r_{i_1,\ldots,i_m}
\Bigr)^2 \Bigr\}^{1/2} \leq C \sigma_h
n^{-1/2}
\label{rn2-1}
\end{eqnarray}
and
\begin{eqnarray}
\e D_{3n} & \leq& C \bigl( \sigma_h n^{-1/2}+
\sigma_h^{-1} n^{-2m+3/2} \e\La_n^2
\bigr) \leq C \sigma_h n^{-1/2} . \label{rn2-2}
\end{eqnarray}
Moreover, for any pair $(i,j)$ such that $1\leq i \neq j \leq n$,
\begin{eqnarray*}
\e\psi_i^2 \leq C \sigma_h^2
n^{2m-3} ,\qquad\e\Bigl( \sum\psi_{i, j, j_1, \ldots, j_{m-2}}
\Bigr)^2 \leq C \sigma_h^2 n^{2m-4}
\end{eqnarray*}
and
\begin{eqnarray*}
&& \e\Bigl\{ \Bigl\llvert\Bigl( \sum r_{i, j, \ell_1,\ldots, \ell
_{m-2}} \Bigr) \Bigl(
\sum r_{j, j_1, \ldots, j_{m-1}} \Bigr) \Bigr\rrvert \big|X_i \Bigr\}
\\
&&\quad \leq\Bigl[ \e\Bigl\{ \Bigl(\sum r_{i,j, \ell_1,\ldots, \ell
_{m-2}}
\Bigr)^2 \big|  X_i \Bigr\} \Bigr]^{1/2} \times
\Bigl\{ \e\Bigl(\sum\psi_{j, j_1,\ldots, j_{m-1}} \Bigr)^2 \Bigr
\}^{1/2}
\\
&&\quad \leq C \sigma_h n^{2m-7/2} \times\bigl\{ \e\bigl(
r^2_{1, \ldots, m} \mid X_i \bigr) \bigr
\}^{1/2} .
\end{eqnarray*}

Combining the above calculations, we obtain
\begin{equation}
\sum_{i=1}^n\e\bigl\llvert
\xi_i \bigl( D_{1n} - D_{1n}^{(i)}
\bigr) \bigr\rrvert\leq C n^{-m+1/2} \sum_{i=1}^n
\bigl( \e\xi_i^2 \bigr)^{1/2} \bigl( \e
\psi_i^2 \bigr)^{1/2} \leq C
\sigma_h n^{-1/2} \label{rn2-3}
\end{equation}
and
%
\begin{eqnarray} \label{rn2-4}
&& \sum_{i=1}^n \e\bigl\llvert x
\xi_i I\bigl\{\llvert\xi_i \rrvert\leq1/(1+x)\bigr\}
\bigl( D_{3n} - D_{3n}^{(i)} \bigr)\bigr\rrvert
\nonumber
\\
&&\quad \leq C \sigma_h^{-1} n^{-2m+3/2} \Biggl[ \sum
_{i=1}^n\e\psi_i^2
+ \sum_{i \neq j} \e\Bigl( \sum
\psi_{i,
j, j_1, \ldots, j_{m-2}} \Bigr)^2
\nonumber\\[-8pt]\\[-8pt]\nonumber
&&\qquad{} + 2 \sum_{i \neq j} \e\Bigl\{ \llvert
\xi_i\rrvert\times\Big|\Bigl( \sum r_{i, j, \ell_1, \ldots, \ell_{m-2}}
\Bigr) \Bigl( \sum r_{j, j_1,\ldots, j_{m-1}} \Bigr) \Big|\Bigr\} \Biggr]
\nonumber
\\
&&\quad \leq C \sigma_h n^{-1/2} .\nonumber
\end{eqnarray}

Assembling (\ref{rn2-1})--(\ref{rn2-4}) proves (\ref{rn2-ub}) and
completes the proof of Proposition~\ref{prop2}.
\end{pf*}

\begin{pf*}{Proof of Lemma~\ref{l1}}
We prove (\ref{l1-1}) by the method of induction, and (\ref{l1-2}) follows
a similar argument. First, for $m=2$, observe that
\begin{eqnarray*}
L_2 = \e\bigl( r_{1,2} e^{Y_1+Y_2} \bigr) = \e\bigl
\{ r_{1,2} \bigl(e^{Y_1}-1\bigr) \bigl(e^{Y_2}-1
\bigr) \bigr\}.
\end{eqnarray*}
Using the inequality
\begin{equation}
\bigl\llvert e^{t-t^2/2}-1\bigr\rrvert\leq2\llvert t\rrvert\qquad\mbox{for all } t \in\mathbb{R}, \label
{ineqtool1}
\end{equation}
we have (recall that $\xi_i=n^{-1/2} h_{1i}$)
\begin{eqnarray*}
\llvert L_2\rrvert\leq4 x^2 n^{-1} \e\llvert
r_{1,2} h_{1 1} h_{1 2}\rrvert\leq4 \sigma
_h x^2 n^{-1} .
\end{eqnarray*}

Similarly, noting that $ \tilde{L}_2= \e\{ r_{1,2} (e^{Y_2}-1)\mid X_1\}
$, we get
\begin{eqnarray*}
\llvert\tilde{L}_2\rrvert\leq2 \bigl\{ \e\bigl(
r_{1,2}^2 \mid X_1 \bigr) \bigr
\}^{1/2} x n^{-1/2} ,
\end{eqnarray*}
as desired.

For the general case where $m>2$, we derive
\begin{eqnarray*}
&& \e\bigl( r_{1,\ldots,m} e^{Y_1+\cdots+Y_m} \bigr)
\\
&&\quad
=  \e\bigl\{r_{1,\ldots,m} \bigl(e^{Y_1}-1\bigr)\cdots\bigl(e^{Y_m}-1
\bigr) \bigr\}
+ \sum_{1\leq i_1<\cdots<i_{m-1}\leq m}\e\bigl( r_{1,\ldots,m}
e^{Y_{i_1}+ \cdots+Y_{i_{m-1}}} \bigr)
\\
&&\qquad{}- \sum_{1 \leq i_1< \cdots<i_{m-2} \leq m} \e\bigl( r_{1,\ldots
,m}e^{Y_{i_1}+ \cdots+Y_{i_{m-2}}} \bigr)
+ \cdots\\
&&\qquad{}+(-1)^{m-1}\sum_{1\leq i_1<i_2\leq m} \e\bigl(
r_{1,\ldots,m} e^{Y_{i_1}+Y_{i_2}} \bigr)
\\
&&\quad = \e\bigl\{ r_{1,\ldots,m} \bigl(e^{Y_1}-1\bigr)\cdots
\bigl(e^{Y_m}-1\bigr) \bigr\} +m L_{m-1}
\\
&&\qquad{} - \pmatrix{m \cr m-2}L_{m-2} +\cdots+ (-1)^{m-1}\pmatrix{m
\cr 2} L_2,
\end{eqnarray*}
where for each $k$-tuple $(i_1, \ldots, i_k)$ ($2\leq k\leq m-1$)
satisfying $1\leq i_1 < \cdots< i_k\leq m$,
\begin{eqnarray*}
\e\bigl( r_{1,\ldots, m} e^{Y_{i_1}+\cdots+Y_{i_k}} \bigr) &=& \e\bigl[
e^{Y_{i_1}+
\cdots+Y_{i_k}} \e\bigl\{ r(X_1, \ldots, X_m)\mid
X_{i_1}, \ldots, X_{i_k} \bigr\} \bigr]
\\
&=& \e\bigl( r_{i_1,\ldots,i_k} e^{Y_{i_1}+ \cdots+Y_{i_k}} \bigr) = L_k,
\end{eqnarray*}
by definition. Using inequality (\ref{ineqtool1}) again gives
\begin{eqnarray*}
\bigl\llvert\e\bigl\{ r_{1,\ldots,m}\bigl(e^{Y_1}-1\bigr)\cdots
\bigl(e^{Y_m}-1\bigr) \bigr\} \bigr\rrvert\leq2^m
x^m n^{-m/2} \e\llvert r_{1,\ldots,m} h_{1 1}
\cdots h_{1 m}\rrvert\leq\sigma_h (2 x)^m
n^{-m/2} ,
\nonumber
\end{eqnarray*}
completing the proof of (\ref{l1-1}) by induction and under the condition
that $x \leq\sqrt{n}/2$.
\end{pf*}

\begin{appendix}\label{appe}

\section{Proof of Theorem~\texorpdfstring{\protect\ref{t23}}{2.2}} The main idea of the
proof is to first truncate $\xi_i$ at a suitable level, and then apply
the randomized concentration inequality to the truncated variables.

For $x \geq0$ and $i=1,\ldots, n$, define $Y_i = x\xi_i -x^2 \xi
_i^2/2$, and
\[
\bar{\xi_i}= \xi_{i} I\bigl\{ \llvert
\xi_i \rrvert\leq1/(1+x)\bigr\},\qquad\bar{Y_i}=
Y_i I\bigl\{\llvert\xi_i \rrvert\leq1/(1+x) \bigr
\}. %
\]
Moreover, put $S_Y=\sum_{i=1}^nY_i$ and $S_{\bar{Y}}= \sum_{i=1}^n\bar{Y_i}$.

We first consider the case of $x>0$. Proceeding as in (\ref{up}) and
(\ref{low}), we have
\begin{equation}
\P\bigl( S_Y \geq x^2/2+x \De_{2n} \bigr)
\leq\P(T_n \geq x) \leq\P\bigl( S_Y \geq
x^2/2 -x \De_{1n} \bigr), \label{BE1}
\end{equation}
where $\De_{1n}=x(V_n^2-1)^2+\llvert D_{1n} \rrvert + xD_{2n} \wedge0
$ and $\De
_{2n}=xD_{2n}/2-D_{1n}$. Replacing the $\xi_i^2$'s with their
truncated versions, we put
$\De_{3n} = x( \sum_{i=1}^n\bar{\xi_i}^2 -1 )^2+\llvert D_{1n}
\rrvert + x D_{2n}
\wedge0$, such that
%
\begin{eqnarray}\label{BE2}
&& \bigl\llvert\P\bigl( S_Y \geq x^2/2-x
\De_{1n} \bigr) - \P\bigl( S_{\bar{Y}} \geq x^2/2 -x
\De_{3n} \bigr) \bigr\rrvert
\nonumber\\[-9pt]\\[-9pt]\nonumber
&&\quad \leq\P\Bigl\{ \max_{1\leq i\leq n}\llvert\xi_i \rrvert
>1/(1+x) \Bigr\} \leq(1+x)^2 \sum_{i=1}^n
\e\xi_i^2 I\bigl\{ \llvert\xi_i \rrvert
>1/(1+x) \bigr\},
\end{eqnarray}
and the same bound holds for $ \llvert \P( S_Y \geq x^2/2+x \De_{2n}
)- \P(
S_{\bar{Y}} \geq x^2/2 +x \De_{2n} ) \rrvert $.

It suffices to estimate the probabilities of the truncated random
variables. Consider the following decomposition:
\begin{equation}
\P\bigl( S_{\bar{Y}} \geq x^2/2 -x \De_{3n} \bigr)
\leq\P\bigl( S_{\bar{Y}} \geq x^2/2 \bigr) + \P\bigl(
x^2/2- x \De_{3n} \leq S_{\bar{Y}} <
x^2/2 \bigr), \label{trun-decom}
\end{equation}
where\vspace*{2pt} $S_{\bar{Y}} =\sum_{i=1}^n\bar{Y}_i$ denotes the sum of the
truncated random variables. Write $\bar m_n = \sum_{i=1}^n\e\bar
Y_i$, $ \bar{\sigma}_n^2 =\sum_{i=1}^n\var( \bar{Y}_i )$ and $\bar
{v}_n= \sum_{i=1}^n\e\llvert \bar{Y}_i\rrvert ^3 $. By a similar
calculation to
that leading to (\ref{mgf-expan}),
\begin{eqnarray*}
\e\bar{Y}_i &=& - \bigl(x^2/2\bigr) \e
\xi_i^2 + O(1) \bigl(x+x^2\bigr) \e
\xi_i^2 I\bigl\{ \llvert\xi_i \rrvert
>1/(1+x)\bigr\} ,
\\
\e\bar{Y}_i^2 &=& x^2\e
\xi_i^2 +O(1) \bigl[ x^2 \e
\xi_i^2 I\bigl\{ \llvert\xi_i\rrvert
>1/(1+x)\bigr\} + x^3 \e\llvert\bar{\xi}_i\rrvert
^3 \bigr] ,
\\
\e\llvert\bar{Y}_i\rrvert^3 &=& O(1) x^3
\e\llvert\bar{\xi}_i \rrvert^3
\end{eqnarray*}
and
\begin{eqnarray*}
\var(\bar{Y}_i) &=& x ^2 \e\xi_i^2
+O(1) \bigl[ x^2 \e\xi_i^2 I\bigl\{ \llvert
\xi_i \rrvert>1/(1+x) \bigr\} + x^3 \e\llvert\bar{
\xi}_i\rrvert^3 \bigr],
\end{eqnarray*}
where $\llvert O(1) \rrvert \leq C_1$ for some absolute constant
$C_1$. Combining
these calculations, we have
%
\begin{eqnarray}\label{var-lbd}
\bar m_n &=& -x^2/2 + O(1 ) \bigl(x+x^2
\bigr) \sum_{i=1}^n\e\xi_i^2
I\bigl\{ \llvert\xi_i \rrvert>1/(1+x) \bigr\} ,
\nonumber\\[-8pt]\\[-8pt]\nonumber
\bar\sigma_n^2 &=& x^2+O(1) x^2
\sum_{i=1}^n \bigl[ \e
\xi_i^2 I\bigl\{ \llvert\xi_i\rrvert
>1/(1+x) \bigr\} + x \e\llvert\bar{\xi}_{i}\rrvert^3
\bigr] \geq x^2/2,
\end{eqnarray}
where the last inequality holds as long as $ (1+x)^{-2} L_{n,1+x} \leq
(2 C_1)^{-1}$. Otherwise, if this constraint is violated, then (\ref
{sc1}) is always true provided that $C>2 C_1$.

Applying the Berry--Esseen inequality to the first addend in (\ref
{trun-decom}) gives
\begin{eqnarray}\label{BE3}
\P\bigl( S_{\bar Y } \geq x^2/2 \bigr) &=& 1-\Phi( {\bar
\varepsilon}_n )+O(1) \bar v_n \bar{\sigma}_n^{-3}
\nonumber\\[-8pt]\\[-8pt]\nonumber
&=& 1-\Phi(x) +O(1) (1+x)^{-1}L_{n,1+x},
\end{eqnarray}
where $ \bar{\varepsilon}_n :=\bar{\sigma}_n^{-1}(x^2/2- \bar
{m}_n) = x+O(1) (1+x)^{-1}L_{n,1+x}$ by (\ref{var-lbd}).

For the second addend in (\ref{trun-decom}), applying the
concentration inequality (\ref{con-ineq-1}) to $\bar W_n = \bar
{\sigma
}_n^{-1} ( S_{\bar Y} - \bar{m}_n ) $ and noting that $\llvert \bar
Y_{i}\rrvert
\leq3 x \llvert \bar{\xi}_{i}\rrvert /2$, we obtain
%
\begin{eqnarray}\label{apprcineq1}
&& \P\bigl( x^2/2- x\llvert{\De}_{3n}\rrvert\leq
S_{\bar Y} < x^2/2 \bigr)\nonumber
\\
&&\quad =\P( \bar{\varepsilon}_n-
x {\De}_{3n} / \bar{\sigma}_n \leq\bar{W}_n
\leq\bar{\varepsilon}_n )
\nonumber\\[-8pt]\\[-8pt]\nonumber
&&\quad \leq 17 \bar{\sigma}_n^{-3} \sum
_{i=1}^n\e\llvert\bar Y_{i}\rrvert
^3 + 5 x \bar{\sigma}_n^{-1} \e\llvert{
\De}_{3n}\rrvert+ 2 x\bar{\sigma}_n^{-2}\sum
_{i=1}^n\e\bigl\llvert\bar
Y_{i} \bigl\{ {\De}_{3n}- {\De}_{3n}^{(i)}
\bigr\} \bigr\rrvert
\nonumber
\\
&&\quad \leq  C \Biggl[ \sum_{i=1}^n\e\llvert
\bar{\xi}_{i}\rrvert^3 + \e\llvert\De_{3n}
\rrvert+ \sum_{i=1}^n\e\bigl\llvert\bar
\xi_{i} \bigl\{ {\De}_{3n}- {\De}_{3n}^{(i)}
\bigr\} \bigr\rrvert\Biggr], \nonumber
\end{eqnarray}
where $\De_{3n} = x ( \sum_{i=1}^n\bar\xi_{i}^{ 2} -1 )^2+\llvert
D_{1n} \rrvert +
x\llvert D_{2n} \rrvert $. For $i=1,\ldots,n$, put
\begin{eqnarray*}
d_{ i} &=& \Biggl( \sum_{i=1}^n
\bar{\xi}_{i}^{ 2} -1 \Biggr)^2 - \biggl( \sum
_{ j\neq i} \bar{\xi}_{j}^{ 2}-1
\biggr)^2
\\
&=& \bar{\xi}_{i}^{ 2} \Biggl[ \bar{\xi}_{i}^{ 2}
+ 2\sum_{j\neq i} \bigl( \bar{\xi}_{j}^{ 2}
- \e\bar{\xi}_{j}^{ 2} \bigr) - 2 \e\bar{\xi
}_{i}^{ 2} - 2 \sum_{i=1}^n
\e{\xi}_{i}^2 I\bigl\{ \llvert\bar{\xi}_{i}
\rrvert>1/(1+x) \bigr\} \Biggr] .
\end{eqnarray*}
Direct calculation shows that
\begin{eqnarray*}
\e\Biggl( \sum_{i=1}^n \bar{
\xi}_{i}^2-1 \Biggr)^2 & \leq& C
(1+x)^{-4} \bigl(L_{n,1+x}+L^2_{n,1+x}
\bigr) ,
\\
\sum_{i=1}^n\e\llvert\bar
\xi_{i} d_{i}\rrvert& \leq& C (1+x)^{-5}
\bigl(L_{n,1+x}+L^2_{n,1+x}\bigr).
\end{eqnarray*}
Substituting this into (\ref{apprcineq1}), we get
\begin{eqnarray*}
&& \P\bigl( x^2/2- x\llvert{\De}_{3n}\rrvert\leq
S_{\bar Y} < x^2/2 \bigr)
\\
&&\quad \leq C \Biggl[ (1+x)^{-2} L_{n,1+x} + \e\llvert
D_{1n} \rrvert+ x \e\llvert D_{2n} \rrvert
\\
&&\qquad{} + \sum
_{i=1}^n\e\bigl\{ \llvert\bar\xi_{i}
\rrvert\bigl( \bigl\llvert D_{1n} - D_{1n}^{(i)}
\bigr\rrvert+ x\bigl\llvert D_{2n} - D_{2n}^{(i)}
\bigr\rrvert\bigr) \bigr\} \Biggr].
\end{eqnarray*}
This, together with (\ref{BE1}), (\ref{BE2}), (\ref{trun-decom})
and (\ref{BE3}) implies
\[
P(T_n \leq x ) \leq\Phi(x) + C \breve{R}_{n,x} %
\]
for all $x>0$, where $\breve{R}_{n,x}$ is given in (\ref{rn2}). A
lower bound can be similarly obtained by noting that $\P( {S}_{\bar
{Y}} \geq x^2/2 +x\De_{2n} ) \geq\P( {S}_{\bar{Y}} \geq x^2/2 ) -
\P( x^2/2 \leq{S}_{\bar{Y}} < x^2/2+x \De_{2n} )$.

We next consider the case of $x=0$. It is straightforward that
\begin{eqnarray}
&& \bigl\llvert P(T_n \leq0) - \Phi(0)\bigr\rrvert
\nonumber
\\
&&\quad  = \bigl\llvert\P(W_n+ D_{1n} \leq0 ) - \Phi(0)\bigr
\rrvert\leq\bigl\llvert\P(W_n \leq0) - \Phi(0)\bigr\rrvert+ \P
\bigl( -\llvert D_{1n} \rrvert\leq W_n \leq\llvert
D_{1n} \rrvert\bigr).
\nonumber
\end{eqnarray}
A uniform Berry--Esseen bound (see, e.g., \cite{ChenShao2001}) gives
$\llvert P(W_n \leq0) - \Phi(0)\rrvert \leq4.1 L_{n,1}$. As before,
we can use
the truncation technique and the concentration inequality (\ref
{con-ineq-1}) to upper bound the probability $ \P( -\llvert D_{1n}
\rrvert \leq W_n
\leq\llvert D_{1n} \rrvert )$. The rest of the proof is almost
identical to that for
the case of $x>0$ and is therefore omitted.

\section{Proof of Lemma \texorpdfstring{\protect\ref{l3}}{5.3}} Recall that $Z=X^2- \e
X^2$ and $Y=X-X^2/2$. Using the inequality $\llvert e^s -1 \rrvert
\leq\llvert s\rrvert e^{s \vee
0}$ implies
\begin{eqnarray*}
\e\bigl\{ Z e^{Y} I\bigl(\llvert X\rrvert\leq1 \bigr) \bigr\} &=& \e
\bigl[ Z \bigl\{ 1+ O(1)\llvert Y\rrvert e^{Y \vee0} \bigr\} I\bigl(
\llvert
X\rrvert\leq1\bigr) \bigr]
\\
&=& \e\bigl\{ Z I\bigl(\llvert X\rrvert>1\bigr) \bigr\} +O(1) \e\bigl\{
\llvert
Z \rrvert\cdot\llvert Y\rrvert e^{Y \vee0}I\bigl(\llvert X\rrvert\leq1
\bigr) \bigr\} ,
\end{eqnarray*}
where $\llvert O(1) \rrvert \leq1$. Because $ \llvert Y\rrvert e^{Y
\vee0}I(\llvert X\rrvert \leq1) \leq1.5
\llvert X\rrvert I(\llvert X\rrvert \leq1)$, we have
\begin{equation}
\e\bigl\{ \llvert Z \rrvert\times\llvert Y\rrvert e^{Y \vee0}I\bigl(
\llvert X\rrvert\leq1\bigr) \bigr\} \leq1.5 \e\bigl\{ \llvert X\rrvert
^3 I\bigl(\llvert X\rrvert\leq1\bigr) \bigr\}. \label{e1}
\end{equation}
Note that if both $f$ and $g$ are increasing functions, then $\e f(X)
\e g(X) \leq
\e\{ f(X) g(X) \}$. In particular, we have $\e X^2 \times\P(\llvert
X\rrvert >1)
\leq\e\{ \llvert X\rrvert ^2 I(\llvert X\rrvert >1)\}$, which
further implies
\[
\e\bigl\{ \llvert Z\rrvert e^Y I\bigl(\llvert X\rrvert>1 \bigr)
\bigr\} \leq\sqrt{e} \e\bigl\{ X^2 I\bigl(\llvert X\rrvert>1\bigr)
\bigr\}. %
\]
Together with (\ref{e1}), this yields (\ref{l3-a}).

For (\ref{l3-b}), it is straightforward that
\begin{eqnarray*}
\e\bigl( Z^2 e^Y \bigr) &=& \e\bigl\{ Z^2
e^Y I\bigl(\llvert X\rrvert\leq1\bigr) \bigr\} + \e\bigl\{
Z^2 e^Y I\bigl(\llvert X\rrvert> 1\bigr) \bigr\}
\\
&\leq& \sqrt{e} \bigl[ \e\bigl\{ X^4 I\bigl(\llvert X\rrvert\leq1
\bigr) \bigr\} + \bigl(\e X^2\bigr)^2 \P\bigl(\llvert X
\rrvert\leq1\bigr) -2 \e X^2 \times\e\bigl\{ X^2 I\bigl(
\llvert X\rrvert\leq1\bigr) \bigr\} \bigr]
\\
&&{} + \e\bigl\{ X^4 e^{X-X^2/2}I\bigl(\llvert X\rrvert> 1
\bigr) \bigr\} +\sqrt{e} \bigl(\e X^2\bigr)^2 \times\P
\bigl(\llvert X\rrvert>1\bigr)
\\
&\leq&\sqrt{e} \e\bigl\{ X^4 I\bigl(\llvert X\rrvert\leq1\bigr)
\bigr\} + 4 \e\bigl\{ X^2 I\bigl(\llvert X\rrvert> 1\bigr) \bigr\}
\\
&&{}+ \sqrt{e} \bigl(\e X^2\bigr)^2 - 2\sqrt{e} \e
X^2 \times\e\bigl\{ X^2 I\bigl(\llvert X\rrvert\leq1
\bigr) \bigr\}
\\
&\leq&\sqrt{e} \e\bigl\{ X^4I\bigl(\llvert X\rrvert\leq1\bigr) \bigr
\} + 4 \e\bigl\{ X^2 I\bigl(\llvert X\rrvert> 1\bigr) \bigr\}
\\
&&{}+ \sqrt{e} \e X^2 \times\e\bigl\{ X^2 I\bigl(
\llvert X\rrvert> 1\bigr) \bigr\} - \sqrt{e} \e X^2 \times\e\bigl\{
X^2 I\bigl(\llvert X\rrvert\leq1\bigr) \bigr\}
\\
&\leq&\sqrt{e} \e\bigl\{ \llvert X\rrvert^3 I\bigl(\llvert X\rrvert
\leq1\bigr) \bigr\} + 4 \e\bigl\{ X^2 I\bigl(\llvert X\rrvert> 1
\bigr) \bigr\} + \sqrt{e} \bigl\{ \e X^2 I\bigl(\llvert X\rrvert>1
\bigr) \bigr\}^2,
\end{eqnarray*}
where in the third inequality we use the inequality $\sup_{\llvert
x\rrvert >1} \{
x^2\exp( x-x^2/2) \} \leq4$.

Moreover, noting that
\begin{eqnarray*}
\sup_{\llvert x\rrvert \leq1} \bigl\{ (1-x/2)\exp\bigl(x-x^2/2\bigr)
\bigr\} \leq1\quad\mbox{and}\quad \sup_{x\in\mathbb{R}} \bigl\{ \bigl\llvert
x-x^2/2\bigr\rrvert\exp\bigl( x-x^2/2 \bigr) \bigr\}
\leq\sqrt{e}/2,
\end{eqnarray*}
we obtain
\begin{eqnarray*}
\e\bigl( \llvert Y Z\rrvert e^{Y} \bigr) & =& \e\bigl\{ \llvert Y Z
\rrvert e^Y I\bigl(\llvert X\rrvert\leq1\bigr) \bigr\} +\e\bigl\{
\llvert Y Z \rrvert e^Y I\bigl(\llvert X\rrvert>1\bigr) \bigr\}
\\
&\leq& \e\bigl\{ \bigl\llvert X^2-\e X^2\bigr\rrvert
\times\llvert X\rrvert I\bigl(\llvert X\rrvert\leq1\bigr) \bigr\} +
\frac{\sqrt
{e}}{2} \e\bigl\{ X^2 I\bigl(\llvert X\rrvert>1 \bigr)
\bigr\}
\\
&\leq& 2 \e\bigl\{ X^2 I\bigl(\llvert X\rrvert>1\bigr) \bigr\} + \e
\bigl\{ \llvert X\rrvert^3 I\bigl(\llvert X\rrvert\leq1\bigr) \bigr
\},
\end{eqnarray*}
which proves (\ref{l3-c}).

Finally, for (\ref{l3-d}), it follows from the inequality $\sup
_{\llvert x\rrvert >1} \{ \llvert x^3-x^4/2\rrvert \exp( x-x^2/2)
\} < 3.1$ that
\begin{eqnarray*}
&& \e\bigl( \llvert Y\rrvert Z^2 e^Y \bigr)
\\
&&\quad = \e\bigl\{
Z^2 \llvert Y\rrvert e^Y I\bigl(\llvert X\rrvert\leq1
\bigr) \bigr\} + \e\bigl\{ Z^2 \llvert Y\rrvert e^Y I
\bigl(\llvert X\rrvert> 1\bigr) \bigr\}
\\
&&\quad \leq \frac{\sqrt{e}}{2} \e\bigl\{ Z^2 I\bigl(\llvert X\rrvert\leq1
\bigr) \bigr\}+ \max\biggl[ 3.1 \e\bigl\{ X^2 I\bigl(\llvert X\rrvert
>1\bigr) \bigr\} , \frac{\sqrt{e}}{2} \bigl(\e X^2
\bigr)^2 P\bigl(\llvert X\rrvert>1\bigr) \biggr]
\\
&&\quad \leq\frac{\sqrt{e}}{2} \e\bigl\{ \llvert X\rrvert^3 I\bigl(\llvert
X\rrvert\leq1\bigr) \bigr\}
\\
&&\qquad{} + \max\biggl[ 3.1 \e\bigl\{ X^2 I\bigl(\llvert X\rrvert>1
\bigr) \bigr\} , \frac{ \sqrt{e}}{2} \e\bigl\{ X^2 I\bigl(\llvert X
\rrvert>1\bigr) \bigr\}+ \frac{\sqrt{e}}{2} \bigl\{ \e X^2I\bigl(
\llvert X\rrvert>1\bigr) \bigr\}^2 \biggr],
\end{eqnarray*}
as desired.

\section{Proof of Lemma \texorpdfstring{\protect\ref{tr-lm}}{6.1}} We start with two
technical lemmas. The first follows \cite{LaiShaoWang2011}.

%
\begin{lemma} \label{l00}
Let $\{\xi_i,\mathcal{F}_i,i\geq1\}$ be a sequence of martingale
differences with
$\e\xi_i^2<\infty$, and put
\begin{eqnarray*}
D_n^2=\sum_{i=1}^n
\bigl\{ \xi_i^2+2 \e\bigl(\xi_i^2
\mid\mathcal{F}_{i-1}\bigr)+3 \e\xi_i^2
\bigr\} .
\end{eqnarray*}
Then we have
\begin{eqnarray}
\P\Biggl( \Biggl\llvert\sum_{i=1}^n
\xi_i \Biggr\rrvert\geq x D_n \Biggr) \leq\sqrt{2}
\exp\bigl( -x^2/8 \bigr) \label{lsw}
\end{eqnarray}
for all $x>0$. In particular, if $\{\xi_i,i\geq1\}$ is a sequence of
independent random variables with zero means
and finite variances, write
\[
S_n=\sum_{i=1}^n
\xi_i,\qquad V_n^2=\sum
_{i=1}^n\xi_i^2 \quad\mbox{and} \quad B_n^2=\sum
_{i=1}^n\e\xi_i^2,
\]
such that $D_n^2 = V_n^2 + 5 B_n^2$. Then for any $x\geq0$,
\begin{eqnarray}
\P\bigl(\llvert S_n\rrvert\geq x D_n \bigr) \leq
\sqrt{2} \exp\bigl( -x^2/8 \bigr) \label{lsw-2}
\end{eqnarray}
and
\begin{eqnarray}
\e\bigl[ S_n^2 I\bigl\{ \llvert S_n\rrvert
\geq x (V_n+4B_n ) \bigr\} \bigr] \leq23
B_n^2 \exp\bigl( -x^2/4 \bigr).
\label{lsw-3}
\end{eqnarray}
\end{lemma}

The following result may be of independent interest.

%
\begin{lemma} \label{sumineq}
Let $\{ \xi_i, i\geq1\}$ and $\{\eta_i, i\geq1\}$ be two sequences
of arbitrary random variables.
Assume that the $\eta_i$'s are non-negative, and that for any $u> 0$,
\begin{equation}
\e\bigl\{ \xi_i I( \xi_i \geq u \eta_i
) \bigr\} \leq c_i e^{-c u}, \label{sin}
\end{equation}
where $\{c, c_i, i\geq1\}$ are positive constants. Then, for any
$u>0$, $v > 0$ and $n\geq1$,
\begin{equation}
\P\Biggl\{\sum_{i=1}^n
\xi_i \geq u \Biggl(v+ \sum_{i=1}^n
\eta_i \Biggr) \Biggr\} \leq\frac{e^{-c u}}{c u^2 v}\sum
_{i=1}^nc_i. \label{s-i}
\end{equation}
\end{lemma}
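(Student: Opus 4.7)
My plan is a three-step argument: bound $\e[(\xi_i - u\eta_i)^+]$ index by index, combine via the subadditivity $(\sum_i a_i)^+ \leq \sum_i a_i^+$, and then apply Markov's inequality at level $uv$ to obtain \eqref{s-i}. The crucial observation driving the sharp form of the bound is that hypothesis \eqref{sin} controls $\e[\xi_i I(\xi_i \geq u\eta_i)]$, but what naturally appears when one rewrites $(\xi_i - u\eta_i)^+$ as a layer-cake integral is $\e[\eta_i I(\xi_i \geq s\eta_i)]$; trading one for the other supplies an extra factor $1/s$ that will integrate to produce the $1/(cu^2)$ improvement over a na\"ive Markov bound.

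Concretely, for any $s > 0$, on the event $\{\xi_i \geq s\eta_i\}$ the assumption $\eta_i \geq 0$ forces $\xi_i \geq 0$, and hence $\eta_i \leq \xi_i/s$ there. This yields the pointwise bound
\begin{equation*}
\eta_i\, I(\xi_i \geq s\eta_i) \;\leq\; s^{-1}\, \xi_i\, I(\xi_i \geq s\eta_i),
\end{equation*}
so by \eqref{sin}, $\e[\eta_i I(\xi_i \geq s\eta_i)] \leq (c_i/s)\, e^{-cs}$. Next, whenever $\eta_i > 0$ one has the layer-cake identity
\begin{equation*}
(\xi_i - u\eta_i)^+ \;=\; \int_u^\infty \eta_i\, I(\xi_i \geq s\eta_i)\, ds,
\end{equation*}
while on $\{\eta_i = 0\}$ a brief check is required: letting $u \to \infty$ in \eqref{sin} yields $\e[\xi_i^+ I(\eta_i = 0)] = 0$, so $\xi_i \leq 0$ almost surely on $\{\eta_i = 0\}$ and both sides of the identity vanish. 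Fubini then delivers
\begin{equation*}
\e[(\xi_i - u\eta_i)^+] \;\leq\; \int_u^\infty \frac{c_i}{s}\, e^{-cs}\, ds \;\leq\; \frac{1}{u}\int_u^\infty c_i\, e^{-cs}\, ds \;=\; \frac{c_i\, e^{-cu}}{cu}.
\end{equation*}

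Finally, writing $S = \sum_i \xi_i$ and $T = \sum_i \eta_i$, the subadditivity of the positive part together with Markov's inequality gives
\begin{equation*}
\P\big(S \geq u(v+T)\big) \;=\; \P\big(S - uT \geq uv\big) \;\leq\; \frac{\e[(S - uT)^+]}{uv} \;\leq\; \frac{1}{uv}\sum_{i=1}^n \e[(\xi_i - u\eta_i)^+],
\end{equation*}
and substituting the termwise bound yields exactly \eqref{s-i}. I expect no serious obstacle: the only step requiring attention is the treatment of $\{\eta_i = 0\}$ in the layer-cake identity, which is resolved automatically by the hypothesis; otherwise the argument is a clean Markov/Fubini exchange, and the key ingredient is the extra factor $1/s$ gained when replacing $\xi_i$ by $\eta_i$ inside the indicator.
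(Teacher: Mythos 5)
Your proof is correct and follows essentially the same route as the paper's: Markov's inequality at level $uv$, subadditivity of the positive part, the layer-cake representation of $\e(\xi_i - u\eta_i)_{+}$, and the key trade $\eta_i I(\xi_i \geq s\eta_i) \leq s^{-1}\xi_i I(\xi_i \geq s\eta_i)$ that converts the hypothesis \eqref{sin} into the integrable bound $\int_u^\infty (c_i/s)e^{-cs}\,ds \leq c_i e^{-cu}/(cu)$ (the paper performs the identical computation after the change of variables $s = tu\eta_i$). Your explicit treatment of the event $\{\eta_i = 0\}$ is a minor refinement the paper leaves implicit, and it is handled correctly.
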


\begin{pf} 
For any $u >0$ and $ v>0$, applying Markov's and Jensen's inequalities gives
%
\begin{eqnarray} \label{s-d}
\mbox{ L.H.S. of (\ref{s-i}) } &\leq&\P\Biggl\{ \sum
_{i=1}^n(\xi_i - u
\eta_i) \geq u v \Biggr\}
\nonumber
\\
&\leq&\frac{1}{uv} \e\Biggl\{ \sum_{i=1}^n(
\xi_i-u \eta_i ) \Biggr\}_{+}
\\
&\leq& \frac{1}{uv} \sum_{i=1}^n\e(
\xi_i-u \eta_i )_{+},\nonumber
\end{eqnarray}
where $x_+ = \max(0, x)$ for all $x \in\mathbb{R}$. For each $1\leq
i\leq n$ fixed, it follows from (\ref{sin}) that
\begin{eqnarray*}
\e( \xi_i-u \eta_i )_{+} &=& \e\int
_{u \eta_i}^{\infty} I( \xi_i \geq s ) \,ds
\\
&=& \int_1^{\infty} u \e\bigl\{
\eta_i I( \xi_i \geq t u \eta_i ) \bigr
\} \,dt
\\
&\leq& \int_1^{\infty} t^{-1} \e\bigl\{
\xi_i I( \xi_i \geq t u \eta_i ) \bigr\}
\,dt
\\
&\leq& c_i \int_1^{\infty}
t^{-1} \exp( -c u t ) \,dt \leq\frac
{e^{-c u}}{c u} c_i ,
\end{eqnarray*}
which completes the proof of (\ref{s-i}) by (\ref{s-d}).
\end{pf}

To prove Lemma~\ref{tr-lm}, we use an inductive approach by
formulating the proof into three steps. Here, $C$ and $B_1, B_2, \ldots
$ denote positive constants that are independent of $n$. Recalling
(\ref{k-cprime}), it is easy to verify that
\begin{equation}
r^2(x_1,\ldots,x_m) \leq2 a_m
\bigl\{ 1 + h_1^2(x_1)+
\cdots+h_1^2(x_m) \bigr\} , \label{psi-m}
\end{equation}
where $a_m = \max\{c_0 \tau, c_0+m\} $. In line with (\ref{WVxi}),
let $W_n =n^{-1/2}\sum_{i=1}^nh_{1 i}$ and $V_n^2=n^{-1}\sum
_{i=1}^nh_{1 i}^2$. Here, and in the sequel, we write
\[
h_{1 i} = h_1(X_i),\qquad
h_{ j, i_1, \ldots, i_j} = \e\bigl\{ h(X_1, \ldots, X_m)\mid
X_{i_1}, \ldots, X_{i_j}\bigr\},\qquad 2\leq j\leq m, %
\]
for ease of exposition. The conclusion is obvious when $0\leq y\leq2$,
therefore we assume $y \geq2$ without loss of generality.

\begin{longlist}
\item[\textit{Step} 1.] Let $m=2$, then (\ref{psi-m}) reduces to
\begin{equation}
r^2(x_1,x_2) \leq2 a_2 \bigl\{
1+ h_1^2(x_1)+ h_1^2(x_2)
\bigr\}, \label{kdm2}
\end{equation}
where $a_2= \max\{ c_0 \tau, c_0+2 \}$. We follow the lines of the
proof of Lemma 3.4 in \cite{LaiShaoWang2011} with the help of
Lemma~\ref{sumineq}.

Retaining the notation in Section~\ref{proof2sec} for $m=2$, we have
\[
\Lambda_n^2=\sum_{i=1}^n
\psi_i^2 ,\qquad\psi_i = \sum
_{j=1,
j\neq i}^n r_{i,j} = \sum
_{j=1 , j\neq i}^n r(X_i,X_j),\qquad 1
\leq i\leq n. %
\]
Conditional on $X_i$, note that $\psi_i$ is a sum of independent
random variables with zero means. To apply inequality (\ref{lsw-3}), put
\begin{eqnarray*}
t_{i} = v_{i} +4 b_{i} ,\qquad
v_{i}^2= \sum_{ j\neq i}
r^2_{i, j},\qquad b_{i}^2 =\sum
_{ j\neq i} \e\bigl( r_{i,j}^2 \mid
X_i\bigr)
\end{eqnarray*}
for $1\leq i\leq n$. By (\ref{lsw-3}), $\e\{ \psi_i^2 I( \psi_i^2
\geq y t_{ i}^2 ) \mid X_i \} \leq23 b_{ i}^2 e^{-y/4}$. Taking
expectations on both sides yields
\[
\e\bigl\{ \psi_i^2 I\bigl( \psi_i^2
\geq y t_{i}^2 \bigr) \bigr\} \leq23 (n-1)e^{-y/4}
\e\bigl( r_{1,2}^2 \bigr) . %
\]

Applying Lemma~\ref{sumineq} with $\xi_i=\psi_i^2$, $\eta_i=t_{
i}$, $u=y$ and $v =a_2 n(n-1) $ gives
\begin{equation}
\P\Biggl\{ \La_n^2 \geq y \Biggl( \sum
_{i=1}^nt_{ i}^2 +
a_2 n(n-1) \Biggr) \Biggr\} \leq C \bigl(a_2
y^2\bigr)^{-1} e^{-y/4} \e\bigl(
r_{1,2}^2 \bigr) . \label{pbesta1}
\end{equation}
Direct calculation based on (\ref{kdm2}) shows
\begin{eqnarray*}
\sum_{i=1}^nv_{ i}^2
\leq a_2 (n-1)n\bigl( 2 + 4 V_n^2 \bigr) ,\qquad\sum_{i=1}^nb_{ i}^2
\leq a_2(n-1)n \bigl( 4+ 2 V_n^2 \bigr) ,
\end{eqnarray*}
which further implies
\begin{eqnarray*}
\sum_{i=1}^nt_{ i}^2
+ a_2 n(n-1) \leq17 \sum_{i=1}^n
\bigl(v_{ i}^2+b_{
i}^2 \bigr) +
a_2 n(n-1) \leq a_2 (n-1)n \bigl( 103 + 102
V_n^2 \bigr).
\end{eqnarray*}
Substituting this into (\ref{pbesta1}) with $y\geq2$ proves (\ref
{tr-ineq-1}).

As for (\ref{tr-ineq-2}), let $\mathcal{F}_j = \sigma\{X_i: i\leq j\}
$ and write
\begin{eqnarray*}
\sum_{1\leq i<j\leq n}r_{i,j} = \sum
_{j=2}^n \sum_{i=1}^{j-1}r_{i,j}
= \sum_{j=2}^n R_j,\qquad
R_j =\sum_{i=1}^{j-1}
r_{i,j},\qquad 2\leq j\leq n.
\end{eqnarray*}
Note that $\{ R_j, \mathcal{F}_j, j\geq2\}$ is a martingale
difference sequence. Then using the sub-Gaussian inequality (\ref
{lsw}) for self-normalized martingales yields
\begin{equation}
\P\Biggl\{ \biggl\llvert\sum_{1\leq i<j\leq n}
r_{i,j} \biggr\rrvert> \sqrt{2y} \Biggl( Q_n^2+2
\widehat{Q}_n^2+3\sum_{j=2}^n
\e R_j^2 \Biggr)^{1/2} \Biggr\} \leq\sqrt{2}
e^{- y /4}, \label{st0}
\end{equation}
where
\[
Q_n^2 = \sum_{j=2}^n
R_j^2 ,\qquad\widehat{Q}^2_n=
\sum_{j=2}^n \e\bigl( R_j^2
\mid\mathcal{F}_{j-1} \bigr) . %
\]

Observe that $Q_n^2$ and $\Lambda_n^2$ have same structure, thus it
can be similarly proved that
\begin{eqnarray}
\qquad\P\bigl\{ Q_n^2 \geq a_2 y
n^2 \bigl( 102 V_n^2 + 103 \bigr) \bigr\}
\leq C a_2^{-1} e^{-y/4} \e\bigl(
r_{1,2}^2 \bigr) . \label{st1}
\end{eqnarray}
For $\widehat{Q}_n^2$, write
\begin{equation}
\hat{t}_j= u_j + 4 d_j\qquad\mbox{where }
u_{j}^2 =\sum_{i=1}^{j-1}
r_{i,j}^2,\qquad d_{j}^2 = \sum
_{i=1}^{j-1} \e\bigl( r_{i,j}^2
\mid X_j \bigr),\qquad 2\leq j\leq n, \label{hattj}
\end{equation}
then it follows from a conditional analogue of (\ref{lsw-3}) that
\begin{equation}
\e\bigl\{ R_j^2 I\bigl( R_j^2
\geq y \hat{t}^{ 2}_j \bigr) \mid X_j \bigr
\} \leq23 d_j^2 e^{-y/4} . \label{temp1}
\end{equation}
Therefore, for $y \geq2$,
%
\begin{eqnarray} \label{temp2}
&& \P\Biggl[ \widehat{Q}_n^2 > y \Biggl\{ \sum
_{j=2}^n \e\bigl( \hat{t}^{ 2}_j
\mid\mathcal{F}_{j-1} \bigr) + a_2 n (n-1) \Biggr\}
\Biggr]
\nonumber
\\
&&\quad \leq\P\biggl[ \frac{ \sum_{j=2}^n \e\{ R_j^2 I( R_j^2 \leq y
\hat{t}_j^{ 2} ) \mid \mathcal{F}_{j-1} \} } { \sum_{j=2}^n \e( \hat
{t}^{ 2}_j \mid \mathcal{F}_{j-1} ) } > y \biggr]
\nonumber\\[-8pt]\\[-8pt]\nonumber
&&\qquad{} +\P\Biggl[ \sum_{j=2}^n \e\bigl
\{ R_j^2 I\bigl( R_j^2 > y
\hat{t}_j^{ 2} \bigr) \mid\mathcal{F}_{j-1}
\bigr\} \geq y a_2 n(n-1) \Biggr]
\nonumber
\\
&&\quad \leq\frac{1}{ a_2 y n (n-1) } \sum_{j=2}^n \e
\bigl\{ R_j^2 I\bigl( R_j^2 >
y \hat{t}_j^{ 2} \bigr) \bigr\} \leq C
a_2^{-1} e^{-y/4} \e\bigl( r_{1,2}^2
\bigr),\nonumber
\end{eqnarray}
where in the last step we used (\ref{temp1}).

For $d_j^2$ and $u_j^2$ given in (\ref{hattj}), we have
\begin{eqnarray*}
\e\bigl( u_j^2 \mid\mathcal{F}_{j-1} \bigr)
&=& \sum_{i=1}^{j-1} \e\bigl(
r_{i,j}^2\mid X_i \bigr) \leq4
a_2(j-1) + 2 a_2 \sum_{i=1}^{j-1}
h_{1 i}^2 ,
\\
\e\bigl( d_j^2 \mid\mathcal{F}_{j-1} \bigr)
&=& \sum_{i=1}^{j-1} r_{i,j}^2
\leq2a_2(j-1) + 2a_2 \sum_{i=1}^{j-1}
\bigl( h_{1 i}^2 + h_{1 j}^2 \bigr),
\end{eqnarray*}
leading to
\begin{eqnarray*}
\sum_{j=2}^n \e\bigl(
\hat{t}_j^2 \mid\mathcal{F}_{j-1} \bigr)
\leq17 \sum_{j=2}^n \bigl\{ \e\bigl(
u_j^2 \mid\mathcal{F}_{j-1} \bigr) + \e
\bigl( d_j^2 \mid\mathcal{F}_{j-1} \bigr)
\bigr\} \leq a_2 (n-1) n \bigl( 104 + 136 V_n^2
\bigr) .
\end{eqnarray*}
Substituting this into (\ref{temp2}) yields
\begin{equation}
\P\bigl\{ \widehat{Q}_n^2 > a_2 y
n^2 \bigl( 136 V_n^2+ 104 \bigr) \bigr\} \leq
C a_2^{-1} e^{-y/4} \e\bigl( r_{1,2}^2
\bigr) . \label{st2}
\end{equation}

Together, (\ref{st0}), (\ref{st1}), (\ref{st2}) and the identity
$\sum_{j=2}^n \e R_j^2 = \frac{1}{2}n(n-1) \e( r_{1,2}^2 ) $ prove
(\ref{tr-ineq-2}).

\item[\textit{Step} 2.] Assume $m=3$. By (\ref{psi-m}),
\begin{equation}
r^2(x_1,x_2,x_3) \leq2
a_3 \bigl\{ 1+h_1^2(x_1)+h_1^2(x_2)+h_1^2(x_3)
\bigr\} \label{kdm3}
\end{equation}
and for $r_2(x_1, x_2)=E\{r(X_1, X_2, X_3) \mid X_1=x_1, X_2=x_2\}$,
\begin{equation}
r_2^2(x_1,x_2) \leq2
a_3 \bigl\{ 2+h_1^2(x_1)+h_1^2(x_2)
\bigr\}. \label{kdm3prime}
\end{equation}

Again, starting from $\Lambda_n^{2}=\sum_{i=1}^n\psi_i^2$ with
%
\begin{eqnarray} \label{s12}
\psi_i &=& \mathop{\sum_{ 1\leq j<k \leq n}}_{j , k
\neq i}
r(X_i,X_j,X_k) := \mathop{\sum
_{1\leq j<k \leq n}}_{j , k \neq i } r_{i,
j, k}
\nonumber
\\
&=& \mathop{\sum_{j=2}}_{j\neq i}^n \mathop{\sum
_{k=1}}_{k\neq i}^{j-1} ( r_{i,j,k}-r_{i,j} )
+\mathop{\sum_{j=2}}_{j\neq i}^{n} \mathop{\sum
_{k=1}}_{k\neq i }^{j-1} r_{i,j}
\\
&:=& \mathop{\sum_{j=2}}_{j\neq i}^n R_{i ,j} +
\mathop{\sum_{j=2}}_{j\neq
i}^{n} \bigl\{ j-1-1(j>i) \bigr\}
r_{i,j}.\nonumber
\end{eqnarray}
Conditional on $(X_i, X_j)$, $R_{i ,j} $ is a sum of independent random
variables with zero means. Define $t_{i,j} = v_{ i,j} + 4 b_{i,j} $,
where
\begin{eqnarray*}
t_{ i, j}^2 &=& \mathop{\sum_{k=1}}_{k \neq i}^{j-1}
(r_{i,j,k}-r_{i,j} )^2 = \mathop{\sum
_{k=1}}_{k \neq i}^{j-1} (h_{3, ijk}- h_{2,
ij} -
h_{1 k})^2,
\\
b_{i,j}^2 &=& \mathop{\sum_{k=1}}_{k \neq i}^{j-1}
\e\bigl\{ (r_{i,j,k}-r_{i,j})^2 \mid
X_i, X_j \bigr\} =\mathop{\sum_{k=1}}_{k \neq i}^{j-1}
\bigl[ \e\bigl\{ (h_{3, ijk}- h_{1 k})^2 \mid
X_i, X_j \bigr\} - h_{2,ij}^2
\bigr].
\end{eqnarray*}
Applying (\ref{lsw-3}) conditional on $(X_i, X_j)$ gives
\[
\e\bigl\{ R_{i ,j}^2 I( R_{i ,j} \geq\sqrt{y}
t_{i,j} ) \mid X_i,X_j \bigr\} \leq23
b_{i,j}^2 e^{-y/4} . %
\]
Then it follows from Lemma~\ref{sumineq} that
\begin{eqnarray}
&& \P\Biggl\{ \sum_{i=1}^n \Biggl( \sum
_{j=2 , j\neq i}^n R_{i ,j}
\Biggr)^2 \geq y n \Biggl( \sum_{i=1}^n
\sum_{j=2 , j\neq i}^n t_{i,j}^2
+ a_3 n^3 \Biggr) \Biggr\}
\nonumber
\\
&&\quad \leq\P\Biggl\{ \sum_{i=1}^n \sum
_{j=2 , j\neq i}^n R_{i ,j}^2
\geq y \Biggl( \sum_{i=1}^n\sum
_{j=2, j\neq i}^n t^2_{i,j} +
a_3 n^3 \Biggr) \Biggr\}
\nonumber
\\
&&\quad \leq C \frac{ e^{- y/4}}{ a_3 n^3 } \sum_{i=1}^n
\sum_{j=2 , j\neq i}^n (j-1) \e\bigl(
r_{1,2, 3}^2 \bigr) \leq C a_3^{-1}
e^{-y/4} \e\bigl( r_{1,2,3}^2 \bigr) .
\nonumber
\end{eqnarray}
This, combined with the inequality $ \sum_{i=1}^n\sum_{j=2, j\neq
i}^n t^2_{i,j} \leq a_3 n^3 ( B_1 + B_2 V_n^2)$ implies
\begin{eqnarray}
\P\Biggl\{ \sum_{i=1}^n \Biggl( \sum
_{j=2 , j\neq i}^n R_{i ,j}
\Biggr)^2 \geq a_3 y n^4 \bigl(
B_1+ 1+ B_2 V_n^2 \bigr) \Biggr
\} \leq C a_3^{-1} e^{-y/4} \e\bigl(
r_{1,2,3}^2 \bigr). \label{dti1}
\end{eqnarray}

For the second addend in (\ref{s12}), consider $ \widetilde
{r}_{i,j}=\{ j-1-I(j>i ) \} r_{i,j} $ as a new (degenerate) kernel
satisfying $\e( \widetilde{r}_{i,j}\mid X_i )= \e( \widetilde
{r}_{i,j}\mid X_j )=0$. Then by similar arguments as in step~1, we obtain
%
\begin{eqnarray}\label{l51-02}
&& \P\Biggl( \sum_{i=1}^n \Biggl[ \sum
_{j=2 , j\neq i}^n \bigl\{ j-1-1(j>i ) \bigr\}
r_{i,j} \Biggr]^2 \geq a_3 y n^4
\bigl(B_3 + B_4 V_n^2\bigr)
\Biggr)
\nonumber\\[-8pt]\\[-8pt]\nonumber
&&\qquad  \leq C a_3^{-1} e^{-y/4} \e\bigl(
r_{1,2,3}^2 \bigr).
\end{eqnarray}

Together, (\ref{s12}), (\ref{dti1}) and (\ref{l51-02}) prove (\ref
{tr-ineq-1}).

To prove (\ref{tr-ineq-2}) for $m=3$, consider the following decomposition:
%
\begin{eqnarray}\label{p1de}
&& \sum_{1\leq i_1<i_2<i_3\leq n}r(X_{i_1},X_{i_2},X_{i_3})\nonumber
\\
&&\quad
=\sum_{1\leq i_1<i_2<i_3\leq n}r_{i_1,i_2,i_3}
\nonumber
\\
&&\quad = \sum_{k=3}^n\sum
_{1\leq i_1<i_2<k} ( r_{i_1,i_2,k}-r_{i_1,i_2}) +\sum
_{k=3}^n\sum_{1\leq
i_1<i_2<k}r_{i_1,i_2}\nonumber
\\
&&\quad = \sum_{k=3}^n \sum
_{1\leq i_1<i_2<k} ( r_{i_1,i_2,k}-r_{i_1,i_2} ) + \sum
_{j=2}^{n-1} \sum_{i=1}^{j-1}(n-j)
r_{i,j}
\\
&&\quad  = \sum_{k=3}^n\sum
_{j=2}^{k-1} \sum_{i=1}^{j-1}
( r_{i,j,k}-r_{i,j}-r_{j,k} ) + \sum
_{k=3}^n \sum_{j=2}^{k-1}
(j-1) r_{j,k} + \sum_{j=2}^{n-1}
\sum_{i=1}^{j-1}(n-j) r_{i,j}
\nonumber
\\
&&\quad := \sum_{k=3}^n \sum
_{j=2}^{k-1} r^*_{1, j k} + \sum
_{k=3}^n \sum_{j=2}^{k-1}
r^*_{2, jk} + \sum_{j=2}^{n-1}
r^*_{ j}, \nonumber
\end{eqnarray}
where
\[
r_{1, jk}^*= \sum_{i=1}^{j-1}(
r_{i,j,k}-r_{i,j}-r_{j,k} ),\qquad
r_{2, jk}^* = (j-1) r_{j,k}\quad\mbox{and}\quad r^*_{j}=
\sum_{i=1}^{j-1}(n-j) r_{i,j}.
\]

Put $R^*_k = R^*_{1,k} + R^*_{2,k}$, $R^*_{1,k}=\sum_{j=2}^{k-1}
r^*_{1, j k}$ and $R^*_{2,k} = \sum_{j=2}^{k-1} r^*_{2,jk} $. We see
that $\{R^*_k, \mathcal{F}_k, k \geq3\}$ is a sequence of martingale
differences, and by (\ref{lsw}),
%
\begin{eqnarray}
\P\Biggl( \Biggl\llvert\sum_{k=3}^n
R^*_k \Biggr\rrvert\geq\sqrt{2 y} \Biggl[ \sum
_{k=3}^n \bigl\{ R^*_k +2\e\bigl(
R^{* 2}_k \mid\mathcal{F}_{k-1} \bigr) +3 \e
R^{* 2}_k \bigr\} \Biggr]^{1/2} \Biggr) \leq
\sqrt{2} e^{-y/4}. \label{mi-2}
\end{eqnarray}
Note that conditional on $(X_j,X_k)$, $r^*_{1, j k}$ is a sum of
independent random variables with zero means, and given $X_k$, $
r^*_{2, jk}$ are independent with zero means. Then it is
straightforward to verify that
\begin{equation}
\sum_{k=3}^n \e R^{* 2}_{k}
\leq2\sum_{k=3}^n(k-2)\sum
_{j=2}^{k-1}\e r_{1,j k }^{* 2} +2
\sum_{k=3}^n R^{* 2}_{2,k}
\leq C a_3 n^4. \label{se-3}
\end{equation}

Moreover, by noting the resemblance in structure between $R^*_{ k}$ and
$\psi_i$ (see (\ref{s12})), it can be shown that
\begin{eqnarray}
\P\Biggl\{ \sum_{k=3}^n
R^{* 2}_{ k} \geq a_3 y n^4
\bigl(B_5 + B_6 V_n^2 \bigr)
\Biggr\} \leq C e^{-y/4}, \label{sr-3}
\end{eqnarray}
which is analogous to (\ref{tr-ineq-1}).

It remains to bound the tail probability of $\sum_{k=3}^n \e( R^{*
2}_{k} \mid\mathcal{F}_{k-1})$. In view of (\ref{p1de}), let $t_{j,k}^*
= v_{j,k }^* + 4 b_{j,k}^*$ for $2\leq j<k\leq n$, where
\begin{eqnarray*}
v_{j,k}^{* 2} = \sum_{i=1}^{j-1}
(r_{i,j,k} - r_{i,j} - r_{j,k} )^2 ,\qquad
b_{j,k}^{* 2} = \sum_{i=1}^{j-1}
\e\bigl\{ (r_{i,j,k} - r_{i,j} - r_{j,k}
)^2\mid X_j , X_k\bigr\} ,
\end{eqnarray*}
and for $3\leq k\leq n$, put
\begin{eqnarray*}
t^*_{ k} = v^*_{k} + 4 b^*_{ k },\qquad
v_{ k}^{* 2} =\sum_{j=2}^{k-1}
r_{2, j k}^{* 2},\qquad b^*_{ k}= \sum
_{j=2}^{k-1} \e\bigl( r_{2, j k}^{* 2}
\mid X_k \bigr) .
\end{eqnarray*}

Recall that $R^*_k = R^*_{1,k} + R^*_{2,k}= \sum_{j=2}^{k-1} ( r^*_{1,
j k} + r^*_{2,jk})$. We proceed in a similar manner as in~(\ref{temp2}):
\begin{eqnarray*}
&& \sum_{k=3}^n \e\bigl(
R^{* 2}_{k} \mid\mathcal{F}_{k-1} \bigr)
\\
&&\quad \leq2 \sum_{k=3}^n(k-2)\sum
_{j=2}^{k-1}\e\bigl( r_{1,jk}^{* 2}
\mid\mathcal{F}_{k-1} \bigr) + 2 \sum_{k=3}^n
\e\bigl( R^{* 2}_{2,k} \mid\mathcal{F}_{k-1}
\bigr)
\\
&&\quad  = 2 \sum_{k=3}^n\sum
_{j=2}^{k-1} (k-2) \e\bigl[ r_{1,j k }^{* 2}
\bigl\{ I\bigl( \bigl\llvert r_{1,j k}^{*} \bigr\rrvert\leq
\sqrt{y} t^{*}_{j,k} \bigr) + I\bigl( \bigl\llvert
r_{1,jk}^{* } \bigr\rrvert> \sqrt{y} t^{* }_{j,k}
\bigr) \bigr\} | \mathcal{F}_{k-1}\bigr]
\\
&&\qquad{} + 2\sum_{k=3}^n \e\bigl[
R^{* 2}_{2,k} \bigl\{ I\bigl(\bigl\llvert
R^{*}_{2,k}\bigr\rrvert\leq\sqrt{y} t^{* }_{k}
\bigr) + I\bigl( \bigl\llvert R^{*}_{2,k} \bigr\rrvert>
\sqrt{y} t_{k}^{*}\bigr) \bigr\} \mid
\mathcal{F}_{k-1} \bigr].
\end{eqnarray*}
By (\ref{lsw-3}) and the Markov inequality, we have (recall that
$y\geq2$)
%
\begin{eqnarray}\label{cse-2}
&& \P\Biggl[ \sum_{k=3}^n (k-2) \sum
_{j=2}^{k-1} \e\bigl\{ r_{1,j k}^{* 2}
I\bigl( \bigl\llvert r^*_{1,jk} \bigr\rrvert> \sqrt{y}
t^{*}_{j,k} \bigr) \mid\mathcal{F}_{k-1} \bigr\}
\geq a_3 y n^4 \Biggr]
\nonumber\\[-8pt]\\[-8pt]\nonumber
&&\quad \leq\bigl(a_3 y n^4 \bigr)^{-1} \sum
_{k=3}^n (k-2) \sum_{j=2}^{k-1}
\e\bigl\{ r_{1,j k}^{* 2} I\bigl( \bigl\llvert
r_{1,jk}^*\bigr\rrvert> \sqrt y t^{* }_{j,k}
\bigr) \mid\mathcal{F}_{k-1} \bigr\} \leq C e^{-y/4 }
\end{eqnarray}
and
%
\begin{eqnarray}\label{cse-3}
&& \P\Biggl[ \sum_{k=3}^n \e\bigl\{
R_{2,k}^{* 2} I\bigl( \bigl\llvert R_{2,k}^*\bigr
\rrvert> \sqrt y t^{* }_{k} \bigr) \mid
\mathcal{F}_{k-1} \bigr\} \geq a_3 y n^4 \Biggr]
\nonumber\\[-8pt]\\[-8pt]\nonumber
&&\quad \leq\bigl(a_3 y n^4\bigr)^{-1} \sum
_{k=3}^n \e\bigl\{ R_{2,k}^{* 2}
I\bigl( \bigl\llvert R_{2,k}^* \bigr\rrvert> \sqrt y
t^{* }_{k} \bigr) \mid\mathcal{F}_{k-1} \bigr\}
\leq C e^{-y/4}.
\end{eqnarray}
However, it follows from (\ref{kdm3}) and (\ref{kdm3prime}) that
%
\begin{eqnarray}
\sum_{k=3}^n(k-2) \sum
_{j=2}^{k-1} \e\bigl\{ r_{1,j k}^{* 2}
I\bigl( \bigl\llvert r_{1,jk}^{* }\bigr\rrvert\leq\sqrt y
t^{*}_{j,k} \bigr) \mid\mathcal{F}_{k-1} \bigr\}
&\leq& a_3 y n^4 \bigl( B_7 +
B_8 V_n^2 \bigr), \label{cse-4}
\\
\sum_{k=3}^n \e\bigl\{
R^{* 2}_{2,k} I\bigl( \bigl\llvert R^*_{2,k}\bigr
\rrvert\leq\sqrt y t_{ k}^{*
} \bigr) \mid
\mathcal{F}_{k-1} \bigr\} & \leq& a_3 y n^4
\bigl(B_9 + B_{10} V_n^2 \bigr).
\label{cse-5}
\end{eqnarray}

Assembling (\ref{mi-2})--(\ref{cse-5}), we obtain
\[
\P\Biggl\{ \Biggl\llvert\sum_{k=3}^n
R_{ k}^* \Biggr\rrvert\geq\sqrt{a_3} y n^2
\bigl( B_{11} + B_{12} V_n^2
\bigr)^{1/2} \Biggr\} \leq C e^{-y/4}. %
\]
By induction, a similar result holds for $\sum_{j=2}^{n-1} r_j^*$;
that is,
\[
\P\Biggl\{ \Biggl\llvert\sum_{j=2}^n
r_j^* \Biggr\rrvert\geq\sqrt{a_3} y n^2
\bigl( B_{13} + B_{14} V_n^2
\bigr)^{1/2} \Biggr\} \leq C e^{-y/4}. %
\]
This completes the proof of (\ref{tr-ineq-2}) for $m=3$.

\item[\textit{Step} 3.] For a general $3 < m < n/2$,
\begin{equation}
r_k^2(x_1,\ldots,x_k) \leq2
a_m \Biggl\{ m-k+1+\sum_{j=1}^k
h_1^2(x_j) \Biggr\}, \label{kdm}
\end{equation}
where $r_k(x_1,\ldots,x_k)=E\{r(X_1,\ldots,X_m)\mid X_1=x_1,\ldots
,X_k=x_k\}$ for $k=2,\ldots,m$.

To use the induction, we need the following string of equalities:
%
\begin{eqnarray}\label{de-m}
\psi_i &=& \mathop{\sum_{1\leq\ell_1 < \cdots<\ell_{m-1} \leq n}}_{\ell
_1, \ldots, \ell_{m-1} \neq i }
r_{ \ell_1,\ldots,\ell_{m-1},i}
\nonumber
\\
&=& \mathop{\sum_{\ell_{m-1}=m-1}}_{\ell_{m-1}\neq
i}^{n}\mathop{\sum
_{1\leq
{\ell}_1<\cdots< {\ell}_{m-2}< {\ell}_{m-1}}}_{{\ell}_1,\ldots, {\ell}_{m-2}\neq i } ( r_{ {\ell}_1,\ldots, {\ell
}_{m-2}, \ell_{m-1} , i}-r_{ {\ell}_2,\ldots, {\ell}_{m-1},i} )
\nonumber\\[-8pt]\\[-8pt]\nonumber
&&{} + \mathop{\sum_{2\leq{\ell}_2<\cdots< {\ell}_{m-1} \leq n}}_{{\ell}_2,\ldots, {\ell}_{m-1}\neq i } \bigl\{ {\ell}_2-1-
1( i < {\ell}_2 ) \bigr\} r_{ {\ell}_2,\ldots, {\ell
}_{m-1},i}
\nonumber
\\
&:=& \psi_{1,i}+\psi_{2,i}. \nonumber
\end{eqnarray}
Moreover,
\begin{eqnarray}
\psi_{1,i} &=& \mathop{\sum_{{\ell}_{m-1}=m-1}}_{{\ell
}_{m-1} \neq i }^{n}
\mathop{\sum_{1\leq{\ell}_1<\cdots<{\ell}_{m-2} <{\ell}_{m-1}}}_{{\ell
}_1,\ldots,{\ell}_{m-2} \neq i} ( r_{ {\ell}_1,\ldots,{\ell}_{m-2},{\ell
}_{m-1} ,i }- r_{ {\ell
}_2,\ldots,{\ell}_{m-1} ,i} )
\nonumber
\\
& =& \mathop{\sum_{{\ell}_{m-1}=m-1}}_{{\ell
}_{m-1}\neq i}^{n} \mathop{\sum
_{1\leq{\ell}_1<\cdots<{\ell}_{m-2}<{\ell}_{m-1}}}_{{\ell
}_1,\ldots,{\ell}_{m-2} \neq i} \breve{r}_{{\ell}_1,\ldots, \ell_{m-1},i}
\nonumber
\\
&=& \mathop{\sum_{{\ell}_{m-1}=m-1}}_{{\ell}_{m-1}
\neq i}^{n} \mathop{\sum_{{\ell}_{m-2}=m-2}}_{{\ell}_{m-2}\neq i
}^{{\ell
}_{m-1}-1}\ldots\mathop{\sum_{{\ell}_2=2}}_{{\ell}_2
\neq i }^{{\ell}_3-1}
\Biggl( \mathop{\sum_{{\ell}_1=1 }}_{{\ell}_1 \neq
i}^{{\ell}_2-1} \breve
{r}_{{\ell}_1,\ldots, \ell_{m-1},i} \Biggr)
\nonumber
\\
& =& \mathop{\sum_{{\ell}_{m-1}=m-1}}_{{\ell}_{m-1}
\neq i }^{n} \mathop{\sum
_{{\ell}_{m-2}=m-2}}_{{\ell}_{m-2} \neq i
}^{{\ell
}_{m-1}-1}\ldots \mathop{\sum_{{\ell}_2=2}}_{{\ell}_2
\neq i }^{{\ell}_3-1}
\breve{R}_{{\ell}_2, \ldots,\ell_{m-1},i}
\nonumber
\end{eqnarray}
with
\[
\breve{r}_{{\ell}_1,\ldots, \ell_{m-1}}= r_{ {\ell}_1,\ldots
,{\ell}_{m-2},{\ell}_{m-1} ,i}-r_{ {\ell}_2,\ldots,{\ell
}_{m-1},i},\qquad
\breve{R}_{{\ell}_2, \ldots,\ell_{m-1},i} =\mathop{\sum_{{\ell}_1=1
}}_{{\ell}_1 \neq i}^{{\ell}_2-1}
\breve{r}_{{\ell}_1,\ldots, \ell_{m-1},i}. %
\]

Conditional on $(X_i,X_{{\ell}_2},\ldots,X_{{\ell}_{m-1}})$, $\breve
{R}_{{\ell}_2, \ldots,\ell_{m-1},i}$ is a sum of independent random
variables with zero means. Also, it is straightforward to verify that
\begin{eqnarray}
\psi_{1,i}^2 \leq{n-1 \choose m-2} \mathop{\sum
_{{\ell}_{m-1}=m-1}}_{{\ell}_{m-1} \neq
i}^{n} \mathop{\sum_{{\ell}_{m-2}=m-2}}_{{\ell}_{m-2}
\neq i}^{{\ell
}_{m-1}-1}
\ldots\mathop{\sum_{{\ell}_2=2}}_{{\ell}_2\neq
i}^{{\ell
}_3-1} \breve{R}_{{\ell}_2, \ldots,\ell_{m-1},i}^2.
\nonumber
\end{eqnarray}
Next, let $\breve{t}_{\ell}=\breve{v}_{\ell} + 4 \breve{b}_{\ell
}$, where
\begin{eqnarray*}
\breve{v}_{\ell} = \sum_{{\ell_1}=1 , \ell_1 \neq i}^{{\ell}-1}
\breve{r}_{\ell_1, \ldots, \ell_{m-1}, i}^2 , \qquad\breve{b}_{\ell
}^2
= \sum_{\ell_1=1 , \ell_1\neq i}^{{\ell}-1} \e\bigl(
\breve{r}_{\ell_1, \ldots, \ell_{m-1}, i}^2 \mid X_i, X_{\ell},
X_{\ell_3}, \ldots, X_{\ell_{m-1}}\bigr) .
\end{eqnarray*}
Similar to the proof of (\ref{dti1}), we derive from Lemma~\ref{l00}
that for every $y\geq2$,
\begin{eqnarray*}
{n-1 \choose m-2}^{-1} \sum_{i=1}^n
\psi_{1,i}^2 \leq y \Biggl\{ a_m {n-1 \choose
m-1} + \sum_{i=1}^n \mathop{\sum
_{{\ell}_{m-1}=m-1}}_{{\ell
}_{m-1} \neq i}^{n}\ldots\mathop{\sum_{{\ell}_2=2}}_{
{\ell
}_2\neq i}^{{\ell}_3-1}
\breve{t}_{{\ell}_2}^{ 2} \Biggr\}
\end{eqnarray*}
holds with probability at least $1- C \exp(-y/4)$. This, together with
the following inequality
\begin{eqnarray*}
\sum_{i=1}^n\mathop{\sum
_{{\ell}_{m-1}=m-1}}_{{\ell}_{m-1} \neq
i}^{n}\ldots\mathop{\sum_{{\ell}_2=2}}_{{\ell}_2\neq i}^{{\ell}_3-1}
\breve{t }_{{\ell}_2}^{ 2} \leq a_m {n \choose m }
\bigl( B_{15} + B_{16} V_n^2 \bigr)
\end{eqnarray*}
which can be obtained by using (\ref{kdm}) repeatedly, gives
\begin{equation}
\P\Biggl\{ \sum_{i=1}^n
\psi_{1,i}^2 \geq a_m y n^{2m -2 }\bigl(
B_{17} +B_{18} V_n^2 \bigr) \Biggr
\} \leq C e^{-y/4 }. \label{pt1}
\end{equation}

For $\psi_{2,i}$, note that the summation is carried out over all
$(m-2)$-tuples and
\[
\bigl\llvert\bigl\{ {\ell}_2-1-1( i<{\ell}_2)\bigr\}
r_{ {\ell}_2,\ldots,{\ell}_{m-1},i} \bigr\rrvert\leq n \llvert r_{
{\ell}_2,\ldots,{\ell}_{m-1},i}\rrvert.
\]
Regarding $ \{ {\ell}_2-1-1( i<{\ell}_2) \} r_{ {\ell}_2,\ldots
,{\ell}_{m-1},i}$ as a (weighted) degenerate kernel with $(m-1)$
arguments, it follows from induction that
\begin{equation}
\P\Biggl\{ \sum_{i=1}^n
\psi_{2,i}^2 \geq a_m y n^{2m-2} \bigl(
B_{19} +B_{20} V_n^2 \bigr) \Biggr
\} \leq C e^{-y/4}. \label{pt2}
\end{equation}
Assembling (\ref{de-m}), (\ref{pt1}) and (\ref{pt2}) yields (\ref
{tr-ineq-1}).

Similarly, using the decomposition
\begin{eqnarray*}
&& \sum_{1\leq i_1<\cdots<i_m\leq n} r(X_{i_1},
\ldots,X_{i_m})
\\[-1pt]
&&\quad  =\sum_{1\leq i_1<\cdots<i_m\leq n}r_{i_1,\ldots,i_m}
\\[-1pt]
&&\quad = \sum_{k=m}^n\sum
_{1\leq i_1<\cdots<i_{m-1}<k} ( r_{i_1,\ldots,i_{m-1},k}-r_{i_1,\ldots
,i_{m-1}} )
+ \sum_{1\leq i_1<\cdots<i_{m-1}\leq
n-1}(n-i_{m-1})r_{i_1,\ldots,i_{m-1}}.
\end{eqnarray*}
Because $\e( r_{i_1,\ldots,i_{m-1},k}\mid\mathcal
{F}_{k-1})=r_{i_1,\ldots,i_{m-1}}$,
\[
\biggl\{ R^*_k:= \sum_{1\leq i_1<\cdots<i_{m-1}\leq k} (
r_{i_1,\ldots,i_{m-1},k}-r_{i_1,\ldots,i_{m-1}}), \mathcal{F}_{k} \biggr
\}_{ k \geq m} %
\]
is a martingale difference sequence, such that the following analogue
of (\ref{mi-2}) holds:
\begin{eqnarray*}
\P\Biggl( \Biggl\llvert\sum_{k=m}^n
R^*_{k} \Biggr\rrvert\geq\sqrt{2 y} \Biggl[ \sum
_{k=m}^n \bigl\{ R^{* 2}_k
+2 \e\bigl( R^{* 2}_k \mid\mathcal{F}_{k-1}
\bigr) +3 \e R^{* 2}_k \bigr\} \Biggr]^{1/2}
\Biggr) \leq\sqrt{2} e^{-y/4}.
\end{eqnarray*}
For $m\leq k\leq n$ fixed, extending (\ref{p1de}) gives
\begin{eqnarray*}
R^*_k &=& \sum_{1\leq i_1<\cdots<i_{m-1}<k} (
r_{i_1,\ldots
,i_{m-1},k}-r_{i_1,\ldots,i_{m-1}} )
\\[-1pt]
&=& \sum_{i_{m-1}=m-1}^{k-1}\ldots\sum
_{i_1=1}^{i_2-1} ( r_{i_1 ,
i_2,\ldots,i_{m-1},k}-r_{i_1,\ldots,i_{m-1}}
-r_{i_2,\ldots,i_{m-1},k}+r_{i_2,\ldots,i_{m-1}} )
\\[-1pt]
&&{} + \sum_{i_{m-1}=m-1}^{k-1}\ldots\sum
_{i_2=2}^{i_3-1}w_{2 } (
r_{i_2,\ldots,i_{m-1},k}-r_{i_2,\ldots,i_{m-1}} -r_{i_3,\ldots
,i_{m-1},k}+ r_{i_3,\ldots,i_{m-1}} )
\\[-1pt]
&&{} + \cdots+ \sum_{i_{m-1}=m-1}^{k-1}w_{m-1 }
r_{i_{m-1},k},
\end{eqnarray*}
where $w_{j}:={i_{j} -1 \choose j-2}$ for $2\leq j\leq m-1$, and set
$w_1 \equiv1$ for convention. Moreover, for $1\leq j\leq m-2$, put
\begin{eqnarray}
r^*_{j, i_{j+1} , \ldots, i_{m-1} , k} = \sum_{i_j=j}^{i_{j+1}-1}
w_j ( r_{i_j, \ldots,i_{m-1}, k}-r_{i_j, \ldots,i_{m-1}}
-r_{i_{j+1},\ldots, i_{m-1} , k}+r_{i_{j+1},\ldots,i_{m-1}}
)
\nonumber
\end{eqnarray}
and $r^*_{m-1, k}= \sum_{i_{m-1}=m-1}^{k-1}w_{m-1} r_{i_{m-1}, k}$,
such that
%
\begin{eqnarray}\label{decR*k}
R^*_{k} & =& \sum_{2\leq i_2 < \cdots< i_{m-1} \leq k-1 }
r^*_{1, i_2
, \ldots, i_{m-1} , k}
\nonumber\\[-8pt]\\[-8pt]\nonumber
&&{}+\sum_{ 3\leq i_3 <\cdots< i_{m-1} \leq k-1} r^*_{2, i_{3}
, \ldots, i_{m-1} , k} + \cdots+
r^*_{m-1, k} .
\end{eqnarray}
For $j=1,\ldots, m-2$, conditional on $(X_{i_{j+1}},\ldots
,X_{i_{m-1}}, X_k)$, $r^*_{j, i_{j+1} , \ldots, i_{m-1} , k} $ is a
sum of independent random variables with zero means, and so is
$r^*_{m-1,k}$ conditional on $X_k$.

In particular, we have
\begin{eqnarray*}
\sum_{k=m}^n \e R^{* 2}_k
& \leq&(m-1) \sum_{j=m}^n \biggl\{ \e
\biggl( \sum_{2\leq i_2 < \cdots< i_{m-1} \leq k-1 } r^*_{1, i_2
, \ldots, i_{m-1} , k}
\biggr)^2
\\
&&{}+\e\biggl( \sum_{ 3\leq i_3 <\cdots< i_{m-1} \leq k-1} r^*_{2,
i_{3} , \ldots, i_{m-1} , k}
\biggr)^2 + \cdots+ \e r^{* 2}_{m-1,k} \biggr\}
\\
&\leq&(m-1)\sum_{k=m}^n \biggl\{ \pmatrix{k-2
\cr m-2} \sum_{2 \leq i_2 < \cdots< i_{m-1} \leq k-1 } \e r^{*
2}_{1,i_2,\ldots,i_{m-1},k}
\\
&&{} + \pmatrix{k-3 \cr m-3}\sum_{3\leq i_3 < \cdots< i_{m-1} \leq
k-1 } \e
r^{* 2}_{2,i_3,\ldots,i_{m-1},k} + \cdots+ \e r^{* 2}_{m-1, k}
\biggr\}
\\
&\leq& C (m-1)\e\bigl\{ r^2(X_1,\ldots,X_m)
\bigr\} \sum_{k=m}^n \Biggl\{ \pmatrix{k-2 \cr
m-2} \pmatrix{k-1 \cr m-1}
\\
&&{}+ \pmatrix{k-3 \cr m-3} \sum_{2\leq i_2 < \cdots< i_{m-1} \leq
k-1 }
(i_2-1)^2 + \cdots+ \sum_{i=m-1}^{k-1}
\pmatrix{i-1 \cr m-2}^2 \Biggr\}
\\
&\leq& C a_m n^{2m-2},
\end{eqnarray*}
which extends inequality (\ref{se-3}). In view of (\ref{decR*k}),
inequalities (\ref{sr-3})--(\ref{cse-5}) can be similarly extended by
using Lemmas~\ref{l00} and~\ref{sumineq} in the same way as in
step~2. The proof of Lemma~\ref{tr-lm} is then complete.
\end{longlist}
\end{appendix}

\section*{Acknowledgements}
The authors sincerely thank the Editor, Associate Editor
and anonymous referees for their constructive comments that led to
substantial improvement of the paper.

Qi-Man Shao was supported by Hong Kong Research Grants Council GRF
603710 and 403513.
Wen-Xin Zhou was supported by NIH R01GM100474-4 and a grant from the
Australian Research Council.


%

\printhistory
\end{document}